\newtheorem{defi}{Definition}[subsection]
\newtheorem{theo}{Theorem}[subsection]
\newtheorem*{theo2}{Theorem}
\newtheorem{prop}{Proposition}[subsection]
\newtheorem{lem}{Lemma}[subsection]
\newtheorem{coro}{Corollary}[subsection]
\title{$\alpha$-numbers, diophantine exponent and factorisations of sturmian words}
\author{Ca\"{\i}us Wojcik\footnote{Contact : caius.wojcik@gmail.com}}
\date{}
\begin{document}

\maketitle

\begin{abstract}
We introduce the notion of $\alpha$-numbers and formal intercept of sturmian words, and derive from this study general factorisations formula for sturmian words. Sturmian words are defined as infinite words with lowest unbound complexity, and are characterized by two parameters, the first one being well-known as the slope, and the second being their formal intercepts. We build this formalism by a study of Rauzy graphs of sturmian words, and we use this caracterisation to compute the repetition function of sturmian words and their diophantine exponent. We then develop these techniques to provide general factorisations formulas for sturmian words.

\end{abstract}

\bigskip

\textit{MSC2010 : 05A05, 11K60, 11A63, 68R15, 11A05, 11J71, 11K31, 40A05, 11J70, 11B39.}

\bigskip

Sturmian words are the infinite words with lowest unbounded complexity, and are linked to diophantine approximation through the continued fraction expansion of the proportion of the letter 1 appearing in these infinite words, this first parameter describing sturmian words being called the slope. In this paper, we give a combinatorial bijective description of these words by the use of the Ostrowski numeration system associated to a slope, to define the notion of $\alpha$-numbers, and study some implications of this formalism. The paper is organised with three parts : in the first one we provide lemmas and structure description for the Rauzy graph of sturmian words, and study the basic properties of their repetition function. In the second part we give the definition and main theorem about $\alpha$-numbers and formal intercepts of sturmian words, and give some application of this description to compute the repetition function and the diophantine exponent of sturmian words. In the third part we study the implication of the notion of $\alpha$-numbers to give precise conditions of existence and unicity on the factorisations of sturmian words, along with some constructions on formal intercepts.

Given an irrational number $\alpha=[0,a_1, a_2,a_3,\ldots]$ expanded through its continued fraction, the Ostrowski numeration system consists of finite sums of the form $\sum_{i}b_{i+1}q_i$, where $(q_i)_{i\geq -1}$ is the sequence of denominators of the partial fractions of the continued fraction, and where the $(b_i)$ satisfy the so-called Ostrowski conditions, these sums being known as a bijective description of integers. An $\alpha$-number $\rho$ of the slope $\alpha$ is then given by a formal sum
\begin{center}
	$\displaystyle \rho = \sum_{i= 0}^{+\infty}b_{i+1}q_i $
\end{center}
where the coefficients $(b_i)_{i\geq 1}$ satisfy the Ostrowski conditions, and therefore are not necessarily eventually zero. Our main result, proved in section 2 along with the definitions of $\alpha$-numbers, is the following, where $T$ is the shift operator on infinite words :
\begin{theo2}
Every sturmian word of slope $\alpha$ with continued fraction expansion $[0,a_1,a_2,a_3,\ldots]$ writes uniquely in the form :
\begin{center}
	$x=T^{\rho}(c_\alpha)$
\end{center}
where $\rho$ is an $\alpha$-number and $c_\alpha$ the characteristic word, both with respect to the slope $\alpha$.
\end{theo2}

With this result we give a general formula for the diophantine exponent of sturmian words, and provide some general results concerning factorisations of sturmian words.

\section{General context, repetition function and Rauzy graphs of sturmian words}

\subsection{Complexity function and characteristic sturmian word}

For an infinite word $x=x_1x_2x_3\ldots\in \mathcal{A}^{\mathbb{N}}$ over a finite alphabet $\mathcal{A}$, we define the complexity function $p(x,\cdot )$ as	$p(x,n)=Card\{x_ix_{i+1}\cdots x_{i+n-1}\ | \ i\geq 1\}$ for $n\geq 1$, that is, $p(x,n)$ is the number of factors of length $n$ appearing in $x$.


For $x$ an infinite word over an alphabet $\mathcal{A}$, the Morse and Hedlund theorem states that the word $x$ is ultimately-periodic if and only if there is some $n\geq 1$ such that $p(x,n)\leq n$, see \cite{morsehedlund}, theorem 7.3. for the original proof. As a consequence of this theorem, it follows that a non-ultimately periodic word $x$ satisfies the inequality $\forall n \geq 1, p(x,n)\geq n+1$.

\begin{defi}\label{sturmiendef}
An infinite word $x$ is said to be sturmian if
\begin{center}
	$\forall n\geq 1$, $p(x,n)=n+1$.
\end{center}
\end{defi}

Note that a sturmian word $x$ must be defined over a 2-letter alphabet since $p(x,1)=2$, and for the remainder of the paper we will assume that $\mathcal{A}=\{0,1\}$. 

For $x$ an infinite word over $\mathcal{A}=\{0,1\}$, $x$ is sturmian, if and only if, for every factor $u$ and $v$ of $x$ with $|u|=|v|$, we have $||u|_1-|v|_1|\leq 1$, and the number $\alpha= \lim_{|u|\rightarrow +\infty} \frac{|u|_1}{|u|}$ is irrational. This irrational number $0<\alpha<1$ is called the slope of the sturmian word $x$. For the remainder of the paper, we will call a slope any irrational number $\alpha$ with $0<\alpha<1$.

For an infinite word $x$, we define the dynamical orbit (also known as the subshift) of $x$ as the set :
\begin{center}
	$\Omega(x)=\overline{\{T^k(x) \ | \ k\geq 0\}}$
\end{center}
that is the topological closure of the set of suffixes of $x$, with the set $\mathcal{A}^\mathbb{N}$ endowed with the product topology associated to the discret topology on $\mathcal{A}$. We recall that $T : x=x_1x_2\ldots \mapsto T(x)=x_2x_3\ldots$ is the shift on infinite words. For an infinite word $x$, we denote by $\mathbb{P}_n(x)$ its prefix of length $n\geq 1$.

It is known that two sturmian words of same slope have the same set of factors and the same dynamical orbit, and conversely two sturmian words of different slopes only share a finite number of factors and have disjoint dynamical orbit (see for example Chap. 2 of \cite{lothaire}).

From the definition of sturmian words, it is easy to see that for $x$ a sturmian word and all $n\geq 1$, there is exactly one factor $L_n$ of length $n$ such that both $0L_n$ and $1L_n$ are factors of $x$. This factor is called a left special factor, and we define similarly the right special factors as factors $R_n$ of length $n$ such that both $R_n0$ and $R_n1$ are factors of $x$. By unicity, the factors $L_n$ for $n\geq 1$ are prefixes of one another, and the factors $R_n$ are suffixes of one another.

\begin{defi}\label{caracdef}
For every sturmian word of slope $\alpha$, the sequence $(L_n)_{n\geq 1}$ of its left special factors defines an infinite word :
\begin{center}
	$c_\alpha=\lim L_n$
\end{center}
that depends only on $\alpha$, and is called the characteristic word of the slope $\alpha$.
\end{defi}

For a finite word $u$, we denote by $\widetilde{u}$ the reversal (or mirror) or $u$, and we say that $u$ is palindromic if $u=\widetilde{u}$. The characteristic word of slope $\alpha$ satisfies the following properties : 1) a sturmian word $x$ is characteristic if and only if $0x$ and $1x$ are both sturmian, 2)  $\forall n\geq 1$, $R_n=\widetilde{L_n}$, 3)  the set of factors of a sturmian word is stable under reversal, 4)  for every sturmian word $x$, at least one of the two words $0x$ and $1x$ is sturmian.

The characteristic word of a slope $\alpha$ can be constructed with the help of the continued fraction expansion of the number $\alpha$ as follows. Recall that $\alpha$ writes uniquely as 
	\begin{center}
	$\alpha =[0;a_1,a_2,\ldots]= \cfrac{1}{a_1+\cfrac{1}{a_2+\cfrac{1}{a_3+\ldots}}}$
\end{center}
where the $(a_i)_{i\geq 1}$ are positive integer, called the partial quotients of $\alpha$. We define the sequence $(q_n)_{n\geq -1}$ of positive integers as the denominators of the irreducible fraction
\begin{center}
$ \frac{p_n}{q_n}=[0;a_1,\ldots,a_n]= \cfrac{1}{a_1+\cfrac{1}{\ldots+\frac{1}{a_n}}}$
\end{center}
with the additional values $q_{-1}=0$ and $q_0=1$, this sequence $(q_n)_{n\geq -1}$ is the sequence of continuants of $\alpha$ and satisfies the foundamental recurrence relation
\begin{center}
	$q_{n+1}=a_{n+1}q_n+q_{n-1}$
\end{center}
for every $n\geq 0$.

The characteristic sturmian word of slope alpha is then obtained as the limit of words $c_\alpha=\lim s_n$ where the sequence $(s_n)_{n\geq -1}$ of finite words is defined by the recurrence $s_{-1}=1$, $s_0=0$, $s_1=s_0^{a_1-1}s_{-1}$, $s_{n+1}=s_n^{a_{n+1}}s_{n-1}$ for all $n\geq 1$.
Although this construction of the sturmian characteristic word could be used as a definition of characteristic sturmian words, which would allow to define sturmian words in general, it is a quite non-trivial construction from the definitions we used of sturmian words. Proofs and details about this construction can be found in the classical reference \cite{lothaire}, theorem 2.1.5. This construction leads to constructibility and decidability problems on characteristic words, see for example the recent paper \cite{durand2} on this topic.

The study of sturmian words uses the combinatorial properties of the two classes of finite words called standard words and central words, who are closely related to each other. Standard words and central words are connected to Christoffel words, see \cite{berthe4, labbe} in this topic. The set of standard pairs, which is a subset of $(\mathcal{A}^*)^2$ is recursively defined by the rules : $(0,1)$ is a standard pair, and if $(u,v)$ is a standard pair, then $(vu,v)$ and $(u,uv)$ are standard pairs. A word is said to be standard if it is a component of a standard pair. We use the notation that $u^{-}$ denotes a finite word $u$ whose last letter is removed (with the obvious convention for the empty word). For $(u,v)$ a standard pair, we have $(uv)^{--}=(vu)^{--}$, if $|u|\geq 2$, then $u$ ends with $10$, and if $|v|\geq 2$, then $u$ ends with $01$. The sequence $(s_n)_{n\geq -1}$ appearing in the construction of a characteristic sturmian word is made of standard words. In particular, if $n\geq 1$ is even, $s_n$ ends with $10$, and if $n\geq 2$ is odd, $s_n$ ends with $01$. The closely related class of central words can be defined as words of the form $u^{--}$ for $u$ a standard word, palindromic prefixes of characteristic sturmian words, or equivalently as powers of letters or palindromes of the form $p01q$ where $p$ and $q$ are palindromes, the latter decomposition being unique.

\subsection{repetition function and Rauzy graph of sturmian words}

In this section we present the definitions of Rauzy graphs (also known as factor graphs) and of the repetition function of infinite words. Rauzy graphs have been introduced by G. Rauzy along with a study of arithmetic sequences \cite{rauzy}. This complicated mathematical object encodes deep properties of infinite words, it is for example remarkable that in the case of sturmian words, even if this graph is quite simple from the low complexity condition on sturmian words, it is made of two cycles with length that are relatively prime. See also \cite{arnoux2, avgustinovich2, cassaigne4}.

\begin{defi}
For $x$ an infinite word over a finite alphabet $\mathcal{A}$, and $m\geq 1$, we define the Rauzy graph $G_m$ of degree $m\geq 1$ of $x$ as the directed graph with :
\begin{itemize}
	\item vertexes as factors of $x$ with length $m$,
	\item two factors $s$ and $t$ are connected by a directed arrow $s\rightarrow t$ when there is a factor $w$ of $x$ of length $m+1$ having $s$ as prefix and $t$ as suffix.
\end{itemize}
\end{defi}

For every $m\geq 1$ and every infinite word $x$, the word $x$ defines a path in its Rauzy graph $G_m$ of degree $m$ through
\begin{center}
	$\mathbb{P}_m(x)\longrightarrow \mathbb{P}_m(T(x)) \longrightarrow \mathbb{P}_m(T^2(x))\longrightarrow \cdots \longrightarrow \mathbb{P}_m(T^k(x)) \longrightarrow \cdots $
\end{center}
where we recall the notation $\mathbb{P}_m(x)$ for a prefix of length $m$ of $x$. In the case of sturmian words, the Rauzy graph $G_m$ of $x$ of degree $m$ consists of the fusion of two cycles sharing a common path. This is due to the fact that for a given length $m$, there is exactly one left special factor in $x$ of length $m$ corresponding to a vertex with in-degree 2, as well as exactly one right special factor of length $m$ corresponding to a vertex with out-degree 2. This structure is linked to the so-called three-gap theorem, see \cite{berthe2}.

We give now the definition of the repetition function associated to an infinite word $x$. This function is considered as a complexity function, and has been introduced independently by Y. Bugeaud and D. Kim \cite{bugeaudkim1} on the one hand, and S. Moothathu \cite{moothathu} on the other hand, and another study can be found in \cite{rampersad}. However, our repetition function is slightly different from the function introduced by Y. Bugeaud and D. Kim : if the latter is denoted by $r_0(x,m)$ then it is linked to our own repetition function by the relation $r_0(x,m)=r(x,m)+m$ for all $m\geq 1$.

\begin{defi}\label{repetitiondef}
For $x$ an infinite word over a finite alphabet $\mathcal{A}$, we set for $m\geq 1$, $r(x,m)$ as the largest integer $k\geq 1$ such that the words \begin{center}
$\mathbb{P}_m(x), \  \mathbb{P}_m(T(x)),  \  \ldots ,  \   \mathbb{P}_m(T^{k-1}(x))$
\end{center}
are all pairwise distincts. The function $r(x,\cdot)$ is called the repetition function of $x$.
\end{defi}

For a general word $x$, we have the inequality $r(x,n)\leq p(x,n)$ for all $n\geq 1$, which becomes $r(x,n)\leq n+1$ in the case of sturmian words. We give a proof of the following statement relying on the use of Rauzy graphs, although this result can be easily derived from the work of Bugeaud and Kim \cite{bugeaudkim1}.

\begin{prop}\label{repetitioncomportement}
For $x$ a sturmian word and $m\geq 2$, the following statements are equivalent :
\begin{enumerate}[i)]
	\item $r(x,m)=m+1$,
	\item $r(x,m)\neq r(x,m-1)$.
\end{enumerate}
\end{prop}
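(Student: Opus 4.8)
The plan is to reinterpret both repetition values as first-return data of the walk of $x$ in the single Rauzy graph $G_{m-1}$, exploiting the duality between two consecutive graphs. Writing $v_k=\mathbb{P}_{m-1}(T^k x)$, recall that $\mathbb{P}_m(T^k x)$ has length-$(m-1)$ prefix $v_k$ and length-$(m-1)$ suffix $v_{k+1}$; hence each length-$m$ factor corresponds bijectively to an edge of $G_{m-1}$, and the walk $k\mapsto \mathbb{P}_m(T^k x)$ in $G_m$ is exactly the sequence of edges $e_k=(v_k\to v_{k+1})$ traversed by the walk of $x$ in $G_{m-1}$. Consequently, setting $a:=r(x,m-1)$ and $b:=r(x,m)$, the integer $a$ is the first step at which the $G_{m-1}$-walk repeats a vertex, while $b$ is the first step at which it repeats an edge. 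Since repeating an edge forces repeating its source vertex, $b\ge a$; and since $G_{m-1}$ has $p(x,m-1)=m$ vertices, one has $a\le m$. The implication (i)$\Rightarrow$(ii) is then immediate: if $b=m+1$ then $a\le m<m+1=b$, so $r(x,m)\neq r(x,m-1)$.

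For (ii)$\Rightarrow$(i) I would first locate the first vertex-repetition under the assumption $b>a$. If $v_a=v_{a'}$ with $a'<a$ and this common vertex had out-degree $1$, its unique outgoing edge would be traversed at both steps $a'$ and $a$, producing an edge-repetition at step $a$ and hence $b\le a$, contrary to $b>a$. Thus the first repeated vertex must be the unique out-degree-$2$ vertex, namely the right special factor $R_{m-1}$, and the two departures from it (at steps $a'$ and $a$) must use its two distinct outgoing edges, since otherwise the same edge would again repeat at step $a$.

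It then remains to count how many edges are traversed before the first repetition, using that $G_{m-1}$ is the union of two cycles sharing a common path $Q$ joining $L_{m-1}$ to $R_{m-1}$. Every vertex except $R_{m-1}$ has out-degree $1$, so the walk is forced between successive visits to $R_{m-1}$: after its first visit the walk runs once around the cycle $B_1\cup Q$, where $B_1$ is the first branch taken, returns to $R_{m-1}$ at step $a$, then leaves along the other branch $B_2$. The edges of the initial tail leading to the first visit of $R_{m-1}$ all lie on $B_1\cup Q\cup B_2$, so no edge is repeated until this forced second leg re-enters the shared path $Q$. At that moment the walk has used precisely the edges of $B_1$, of $B_2$ and of $Q$, that is all $m+1$ edges of $G_{m-1}$. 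Hence $b=m+1$, i.e.\ $r(x,m)=m+1$, which is (i).

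The main obstacle I anticipate is this final bookkeeping: one must verify that the tail preceding the first arrival at $R_{m-1}$ introduces no edge outside $B_1\cup Q\cup B_2$, and treat the degenerate configurations, namely an empty shared path (the case $L_{m-1}=R_{m-1}$, where $G_{m-1}$ is a figure-eight) and a tail that begins on the second branch $B_2$. In the latter case the first repeated edge is the edge leaving the starting vertex rather than the first edge of $Q$, but a branch-by-branch description shows that the traversed edges are again exactly all $m+1$ edges. These cases are routine once the forced traversal is written out, and each yields the same count $b=m+1$.
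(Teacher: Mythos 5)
Your argument is correct and is essentially the paper's proof in dual form: where you compare the first vertex-repetition with the first edge-repetition of the walk in $G_{m-1}$, the paper compares the first repetitions in $G_{m-1}$ and in $G_m$, observes that under (ii) the two words $\mathbb{P}_{m}(T^{r(x,m-1)}(x))$ and $\mathbb{P}_{m}(T^j(x))$ must differ in their last letter and hence are the two successors of the right special factor, and concludes by the same two-cycle covering count. The final traversal bookkeeping is left at a comparable level of informality in both versions, so nothing essential is missing from yours.
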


\begin{proof}
The implication $(i)\Rightarrow(ii)$ is clear since $r(x,m-1)\leq m$. For the converse, let $A_m$ and $B_m$ be the two vertexes of $G_m$ such that	$R_m\rightarrow A_m$ and $R_m\rightarrow B_m$
in $G_m$. We consider the path
\begin{center}
	$\mathbb{P}_{m-1}(x)\rightarrow \mathbb{P}_{m-1}(T(x))\rightarrow \cdots \rightarrow \mathbb{P}_{m-1}(T^{r(x,m-1)}(x))$.
\end{center}
in $G_{m-1}$. There is a unique integer $0\leq j < r(x,m-1)$ such that $\mathbb{P}_{m-1}(T^{r(x,m-1)}(x))=\mathbb{P}_{m-1}(T^j(x))$.

 In $G_m$, we cannot have $\mathbb{P}_{m}(T^{r(x,m-1)}(x))=\mathbb{P}_{m}(T^j(x))$ for that would imply $r(x,m)= r(x,m-1)$, in contradiction with our hypothesis. Hence we have $\mathbb{P}_{m}(T^{r(x,m-1)}(x))\neq\mathbb{P}_{m}(T^j(x))$ and those two words differ only by their last letters. This shows the equality of sets $\{A_m,B_m\}=\{\mathbb{P}_{m}(T^{r(x,m-1)}(x)),\mathbb{P}_{m}(T^j(x))\}$ so that the path
\begin{center}
	$\mathbb{P}_{m}(x)\rightarrow \mathbb{P}_{m}(T(x))\rightarrow \cdots \rightarrow \mathbb{P}_{m}(T^{r(x,m)-1}(x))$
\end{center}
goes through the two vertexes $A_m$ and $B_m$. This path is the longest hamiltonian path starting at the vertex $\mathbb{P}_{m}(x)$ in the path defined by $x$, and with in mind the structure of the Rauzy graph of sturmian words, this path has to go through every of the $m+1$ vertexes of $G_m$, giving the value $r(x,m)=m+1$.
\end{proof}

The next result shows that, in the case of characteristic sturmian words, the first repetition of a factor has to come from a prefix.

\begin{lem}\label{repetitioncarac}
For the characteristic sturmian word $c_\alpha$ of the slope $\alpha$, we have :
\begin{center}
	$\mathbb{P}_{m}(T^{r(c_\alpha,m)}(c_\alpha))=\mathbb{P}_m(c_\alpha)=L_m$
\end{center}
for all $m>0$.
\end{lem}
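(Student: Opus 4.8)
The plan is to read the statement through the lens of the Rauzy graph $G_m$ and the walk that $c_\alpha$ traces in it. First I would record the two elementary facts that make the starting point of this walk special. On the one hand, since $c_\alpha=\lim L_n$ and the left special factors are prefixes of one another, the prefix $\mathbb{P}_m(c_\alpha)$ is exactly $L_m$; this already yields the second equality in the statement and identifies the initial vertex $v_0:=\mathbb{P}_m(c_\alpha)=L_m$ of the path. On the other hand, I would invoke the structure of the sturmian Rauzy graph recalled earlier: among the $m+1$ vertices of $G_m$, the factor $L_m$ is the unique vertex with in-degree $2$, every other vertex having in-degree $1$. The whole proof will rest on the coincidence that the vertex where the walk starts is precisely the unique vertex of in-degree $\geq 2$.

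Next I would translate the repetition function into the language of first repetitions along this walk. Writing $v_k=\mathbb{P}_m(T^k(c_\alpha))$ for the successive vertices and $r=r(c_\alpha,m)$, Definition \ref{repetitiondef} says that $v_0,\ldots,v_{r-1}$ are pairwise distinct while $v_r$ equals one of them; thus $v_r$ is the first vertex of the walk to be visited a second time, and the quantity to identify is exactly this first repeated vertex.

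The core step is then a short combinatorial argument on walks in $G_m$. Suppose $v_r=v_i$ with $0\leq i<r$. If $i=0$ we are done, since then $v_r=v_0=L_m$. If $i\geq 1$, the vertex $v_i$ is entered both by the edge $(v_{i-1},v_i)$ and by the edge $(v_{r-1},v_i)$; were $v_i$ of in-degree $1$ these two edges would coincide, forcing $v_{i-1}=v_{r-1}$ with $i-1\neq r-1$ and both indices in $\{0,\ldots,r-1\}$, contradicting the pairwise distinctness of $v_0,\ldots,v_{r-1}$. Hence $v_i$ has in-degree at least $2$, and by the structural fact above this forces $v_i=L_m$. In either case $v_r=L_m$, which gives the desired conclusion $\mathbb{P}_m(T^{r(c_\alpha,m)}(c_\alpha))=L_m=\mathbb{P}_m(c_\alpha)$.

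I expect the main obstacle to be one of justification rather than of idea: one must be certain that $G_m$ has a single vertex of in-degree $\geq 2$, namely $L_m$, and that this coincides with the starting vertex of the characteristic walk. A quick edge count ($m+2$ factors of length $m+1$ distributed over $m+1$ vertices) confirms that exactly one vertex carries in-degree $2$ and the rest in-degree $1$. Once this is pinned down the in-degree argument for the first repetition is forced, and it is worth noting that the non-characteristic case, where the start need not be the in-degree-$2$ vertex, is precisely what breaks the argument, explaining why the lemma is special to $c_\alpha$.
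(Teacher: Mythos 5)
Your proof is correct and follows essentially the same route as the paper: both arguments observe that the first repeated vertex of the walk, if not the starting vertex, must be entered by two distinct edges (equivalently, must be left special), and that the unique left special factor $L_m$ is precisely the starting vertex $\mathbb{P}_m(c_\alpha)$. The only cosmetic difference is that the paper phrases this as a contradiction on the index $j\neq 0$ of the first repeat, while you argue directly that the repeated vertex equals $L_m$ in either case; the underlying structural fact about $G_m$ is identical.
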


\begin{proof}
The equality on the right comes from the definition of $c_\alpha$ as the limit of its left special factors. For the equality on the left, let $0\leq j < r(c_\alpha,m)$ be the unique integer such that $\mathbb{P}_{m}(T^{r(c_\alpha,m)}(c_\alpha))=\mathbb{P}_{m}(T^j(c_\alpha))$ and assume by contradiction that $j\neq 0$. Then $\mathbb{P}_{m}(T^{r(c_\alpha,m)-1}(x))\neq\mathbb{P}_{m}(T^{j-1}(c_\alpha))$ and those two words only differ by their last letters. Hence the word $\mathbb{P}_{m}(T^j(c_\alpha))$ is left special, which leads to $j=0$ and the desired contradiction.
\end{proof}

We extend the definition of the repetition function to finite words $z$, such that $z$ has a factor of length $m$ having two occurrences in $z$, as the value $r(x,m)$ of any infinite word $x$ having $z$ as a prefix.

\begin{lem}\label{repetitioncentraux}
Let $z=p01q$ be a central word with $|p|\leq |q|$, where $p$ and $q$ are palindromic. Then :
\begin{center}
	$r(z,|p|+1)=|p|+2$.
\end{center}
\end{lem}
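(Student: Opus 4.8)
The plan is to recognise $r(z,|p|+1)$ as the first return time of a prefix of the characteristic word, and then to locate that return time by exploiting the symmetric decomposition $z=p01q=q10p$. First I would extract the basic structural facts. As $z$ is central it is one of the palindromic prefixes of the characteristic word $c_\alpha$ of a slope $\alpha$ for which it is central, so $\widetilde z=z$ and hence $p01q=q10p$; since $p01q=q10p$ cannot hold when $|p|=|q|$ (it would force $01=10$), the hypothesis $|p|\le|q|$ is automatically strict. Both $p$ and $q$ are prefixes of $z$, the first from $z=p01q$ and the second from $z=q10p$, so $p$ is a proper prefix of $q$. Setting $m=|p|+1$, the length-$m$ prefix of $z$ is $\mathbb{P}_m(z)=p0$, which is therefore the left special factor $L_m=\mathbb{P}_m(c_\alpha)$. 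Since $z$ is a prefix of $c_\alpha$, the value $r(z,m)$ coincides with $r(c_\alpha,m)$ provided the relevant repetition already takes place inside $z$, and by Lemma \ref{repetitioncarac} this first repetition is precisely a return to the prefix $L_m$. Thus the statement reduces to showing that the first return time of $p0$ in $c_\alpha$ equals $m+1=|p|+2$.

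For the upper bound I would exhibit the recurrence explicitly. The word $T^{|p|+2}(z)$ starts with $q$; as $q$ begins with $p$ and the letter of $q$ in position $|p|$ is $z_{|p|}=0$ (the first symbol of the central block $01$), we get $\mathbb{P}_m(T^{|p|+2}(z))=p0=L_m$. Hence $p0$ reappears at position $|p|+2$, and because $|q|\ge|p|+1$ this occurrence ends at position $2|p|+2\le|z|-1$, i.e. genuinely inside $z$; this also legitimates the identification $r(z,m)=r(c_\alpha,m)$ made above. The first return time is therefore at most $m+1$.

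The substance of the proof is the matching lower bound: $p0$ must not occur at any position $s$ with $1\le s\le m$. Such an occurrence would force the prefix $\mathbb{P}_{m+s}(c_\alpha)$, which is a prefix of $z$ since $m+s\le 2m<|z|$, to admit $s$ as a period, so that $s$ would be at least the minimal period $\pi(m+s)$ of that prefix, where $\pi(\ell)$ denotes the minimal period of $\mathbb{P}_\ell(c_\alpha)$. I would therefore reduce the whole lower bound to the single inequality $\pi(\ell)>\ell-m$ for every $\ell\in[m+1,2m]$, which contradicts $s=\ell-m\ge\pi(\ell)$ and thus rules out all early occurrences of $p0$.

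I expect this inequality to be the main obstacle, precisely because short prefixes of $z$ may have periods strictly smaller than the minimal period $m+1$ of $z$ itself, so one cannot argue by minimality on $z$ alone. Two ingredients make it go through. First, by de Luca's theorem the two coprime periods of the central word $z=p01q$ are $|p|+2$ and $|q|+2$, the smaller one $|p|+2=m+1$ being the minimal period of $z$, and it is a continuant $q_{k_0}$ of $\alpha$. Second, the minimal period of the prefixes of $c_\alpha$ is governed by the continuants: $\pi(\ell)=q_k$ exactly for $q_{k-1}+q_k-1\le\ell\le q_k+q_{k+1}-2$. With $m+1=q_{k_0}$ one checks that on $[m+1,2m]$ the function $\pi$ takes only the two values $q_{k_0-1}$ (for $\ell\le q_{k_0-1}+m-1$, a regime which may be empty) and $q_{k_0}=m+1$ (for the remaining $\ell\le 2m$); in the first regime $\ell-m\le q_{k_0-1}-1<\pi(\ell)$, and in the second $\ell-m\le m<m+1=\pi(\ell)$, so $\pi(\ell)>\ell-m$ in both. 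This forbids any occurrence of $p0$ at positions $1,\dots,m$, whence the first return time is exactly $m+1$; together with the upper bound and the reduction via Lemma \ref{repetitioncarac} we conclude $r(z,|p|+1)=|p|+2$. An alternative to the explicit period formula would be an induction on central words through the self-similar decomposition combined with Proposition \ref{repetitioncomportement}, but the resulting bookkeeping looks heavier than the estimate above.
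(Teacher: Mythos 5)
Your argument is correct as a free-standing proof, but it follows a genuinely different route from the paper, and the difference matters for how the result sits inside this paper. The paper proves the lemma by induction on $|z|$: it peels off the unique central factorisation $q=p01u$ of the longer palindromic half, compares $r(z,|u|+1)$, $r(z,|p|)$ and $r(z,|p|+1)$, and concludes from the jump $r(z,|p|+1)\neq r(z,|p|)$ via Proposition \ref{repetitioncomportement} that $r(z,|p|+1)=|p|+2$; everything is extracted from the recursive structure of central words, with no outside input. You instead reduce the statement, via Lemma \ref{repetitioncarac}, to locating the first return of the prefix $p0$ in $c_\alpha$, get the upper bound from the palindromic identity $p01q=q10p$ (this part is clean, correct, and close in spirit to the paper), and then obtain the lower bound from the inequality $\pi(\ell)>\ell-m$ for $\ell\in[m+1,2m]$, which you deduce from de Luca's two-period description of central words together with the formula $\pi(\ell)=q_k$ for $q_{k-1}+q_k-1\le\ell\le q_k+q_{k+1}-2$ giving the minimal periods of prefixes of $c_\alpha$; the case analysis you run on that formula is correct. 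The caveat is that this last formula is nowhere proved in the paper and is essentially the statement being proved in disguise: an occurrence of $p0$ at shift $s\le m$ is the same thing as the prefix of length $m+s$ having period $s$, so the inequality $\pi(\ell)>\ell-m$ \emph{is} the lower bound of the lemma rephrased, and the full formula for $\pi$ is equivalent to the corollary $r(c_\alpha,m)=q_n$ for $q_n-1\le m\le q_{n+1}-2$ that the paper derives \emph{from} this lemma. So within the paper's self-contained development your proof is circular, even though it is a legitimate verification if one takes the classical period structure of Sturmian prefixes (de Luca, Berthé--Holton--Zamboni) as given. What your approach buys is a transparent, Fine--Wilf-flavoured explanation of \emph{why} the return time is $|p|+2$; what the paper's induction buys is independence from that external machinery. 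To make your version self-contained you would have to prove $\pi(\ell)>\ell-m$ directly, and the natural way to do so is precisely the paper's induction on the nested central decomposition.
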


\begin{proof}
Let $c_{\alpha}$ be a characteristic sturmian word having $z$ as a prefix. We know that $\mathbb{P}_{|p|+1}(T^{r(z,|p|+1)}(c_\alpha))=p0$.

We prove the result by induction on $|z|$. Since $z$ is palindromic, we cannot have $|p|=|q|$, and if $|p|=|q|-1$ then $q=p0=0p$, and hence $z=0^{|p|+1}10^{|p|+1}$, which closes the discussion in this case. We now assume that $|p|\leq|q|-2$ and we set $q=p01u$ for a finite word $u$. The word $u$ is palindromic since $z=q10p=p01u10p$ is palindromic, so that $q=p01u$ is the unique factorisation of $q$ as a central word.

If $|p|\leq |u|$, then the result is derived by induction hypothesis. Assuming otherwise that $|u|\leq |p|$ leads to $r(z,|u|+1)=r(q,|u|+1)=|u|+2$ by induction hypothesis. Since $r(z,|u|+1)\leq r(z,|p|) \leq |u|+2$, we must have $r(z,|p|)=|u|+2$. But $z=u10p10p$, so that the word $u10p0$ is not a prefix of $z$, and we deduce that
\begin{center}
	$r(z,|p|+1)>r(z,|u|+1)=|u|+2=r(z,|p|)$
\end{center}
along with $r(z,|p|+1)\neq r(z,|p|)$. The induction ends as a consequence of proposition \ref{repetitioncarac}. \end{proof}

\begin{coro}
For $c_\alpha$ the characteristic word of slope $\alpha$, with continuants $(q_n)_{n\geq -1}$, we have :
\begin{center}
	$r(c_\alpha,m)=q_n$ \quad for \quad $q_n-1\leq m \leq q_{n+1}-2$.
\end{center}
\end{coro}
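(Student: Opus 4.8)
The plan is to read $r(c_\alpha,m)$ as a first-return time and then to locate, for each $m$, the exact step at which the function lands on a continuant. By Lemma \ref{repetitioncarac} the first repetition of a length-$m$ prefix of $c_\alpha$ is a return to the prefix $L_m=\mathbb{P}_m(c_\alpha)$, so $r(c_\alpha,m)$ is the least $k\ge 1$ with $\mathbb{P}_m(T^k(c_\alpha))=L_m$; equivalently, it is the smallest $k\ge 1$ for which $c_\alpha$ and $T^k(c_\alpha)$ agree on their first $m$ letters. I first record that $r(c_\alpha,\cdot)$ is non-decreasing: if $\mathbb{P}_m(T^i(c_\alpha))=\mathbb{P}_m(T^j(c_\alpha))$ then also $\mathbb{P}_{m-1}(T^i(c_\alpha))=\mathbb{P}_{m-1}(T^j(c_\alpha))$, so the first collision at length $m-1$ cannot occur later than at length $m$, giving $r(c_\alpha,m-1)\le r(c_\alpha,m)$.

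With monotonicity in hand it suffices to control two values per block: the bottom endpoint $m=q_n-1$ and the top endpoint $m=q_{n+1}-2$. For the bottom endpoint I would apply Lemma \ref{repetitioncentraux} to the palindromic prefix $s_{n+2}^{--}$ of $c_\alpha$: writing its unique central factorisation $s_{n+2}^{--}=p\,01\,q$, the shorter of the two palindromes $p,q$ is the central prefix $s_n^{--}$, so that $\min(|p|,|q|)=q_n-2$. When $|p|\le|q|$ the Lemma gives at once $r(c_\alpha,q_n-1)=r(c_\alpha,|p|+1)=|p|+2=q_n$; when instead $|p|>|q|$ I pass to the letter-exchanged word $\sigma(c_\alpha)$, which is the characteristic word $c_{1-\alpha}$ and for which the two blocks swap roles (since $\sigma$ of a central word is again central, its $01$-factorisation reverses the lengths of $p$ and $q$), using that the letter exchange $\sigma$ is injective on words and hence preserves the repetition function, $r(c_\alpha,m)=r(\sigma(c_\alpha),m)$. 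Either way $r(c_\alpha,q_n-1)=q_n$.

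For the top endpoint the goal is only the upper bound $r(c_\alpha,q_{n+1}-2)\le q_n$, that is, that the prefix $L_{q_{n+1}-2}=s_{n+1}^{--}$ already recurs at position $q_n$. This is the statement that the prefix of $c_\alpha$ of length $q_n+q_{n+1}-2$, which is the central word $(s_ns_{n+1})^{--}$, admits $q_n$ as a period, equivalently that $c_\alpha$ and $T^{q_n}(c_\alpha)$ share their first $q_{n+1}-2$ letters. Granting this, monotonicity closes the argument: for $q_n-1\le m\le q_{n+1}-2$ one has $q_n=r(c_\alpha,q_n-1)\le r(c_\alpha,m)\le r(c_\alpha,q_{n+1}-2)\le q_n$, whence $r(c_\alpha,m)=q_n$.

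The main obstacle is the standard-word combinatorics underlying the two inputs above: identifying the shorter palindrome in the central factorisation of $s_{n+2}^{--}$ as $s_n^{--}$ (so that $\min(|p|,|q|)=q_n-2$), and proving that $s_{n+1}^{--}$ recurs exactly at position $q_n$. Both are period statements about the central words $s_k^{--}$ and should be extracted from the recurrence $s_{k+1}=s_k^{a_{k+1}}s_{k-1}$ together with the palindromic identity $(s_ks_{k+1})^{--}=(s_{k+1}s_k)^{--}$; the parity alternation between the cases $|p|\le|q|$ and $|p|>|q|$ is exactly what forces the passage to $c_{1-\alpha}$, and keeping that bookkeeping honest is the delicate point. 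Alternatively one can bypass Lemma \ref{repetitioncentraux} and derive the whole statement directly from the two longest-common-prefix facts $\mathrm{lcp}(c_\alpha,T^{q_n}c_\alpha)=q_{n+1}-2$ and $\mathrm{lcp}(c_\alpha,T^{k}c_\alpha)\le q_n-2$ for $1\le k<q_n$, which are the symbolic form of the best-approximation property of the continuants and which immediately single out $q_n$ as the least $k$ with $\mathrm{lcp}(c_\alpha,T^kc_\alpha)\ge m$ on the whole block.
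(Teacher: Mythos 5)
Your overall architecture is the paper's: reduce to the two endpoints of the block $[q_n-1,q_{n+1}-2]$ via monotonicity of $r(c_\alpha,\cdot)$ and Lemma \ref{repetitioncarac}, get the lower endpoint from Lemma \ref{repetitioncentraux}, and get the upper endpoint from the fact that the prefix of length $q_{n+1}-2$ recurs at position $q_n$. But the central word you feed into Lemma \ref{repetitioncentraux} is the wrong one, and the combinatorial identity you flag as the thing to prove is in fact false. The two periods of the central word $s_{n+2}^{--}$ are $q_{n+1}$ and $q_{n+2}-q_{n+1}=(a_{n+2}-1)q_{n+1}+q_n$ (coming from the standard pair $(s_{n+1},\,s_{n+1}^{a_{n+2}-1}s_n)$ whose product is $s_{n+2}$), so its central factorisation has palindromes of lengths $q_{n+1}-2$ and $q_{n+2}-q_{n+1}-2$. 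Hence $\min(|p|,|q|)=q_n-2$ only in the case $a_{n+2}=1$; as soon as $a_{n+2}\geq 2$ the shorter palindrome is $s_{n+1}^{--}$, and the lemma applied to $s_{n+2}^{--}$ returns $r(c_\alpha,q_{n+1}-1)=q_{n+1}$, i.e.\ the bottom endpoint of the \emph{next} block. Shifting indices does not rescue you either: when $a_{n+1}=1$ and $a_{n+2}\geq 2$, no word of the form $s_k^{--}$ has a central factorisation with shorter palindrome of length $q_n-2$, so your scheme produces nothing at $m=q_n-1$ for such slopes.

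The repair is exactly the paper's choice: apply Lemma \ref{repetitioncentraux} to the central prefix $s_{n+1}s_n^{--}=(s_ns_{n+1})^{--}=s_n^{--}t_ns_{n+1}^{--}$ of length $q_{n+1}+q_n-2$, whose central factorisation genuinely has palindromes $s_n^{--}$ and $s_{n+1}^{--}$; this yields $r(c_\alpha,q_n-1)=q_n$ in all cases, and the very same word, read as having period $q_n$, also gives your upper-endpoint bound $r(c_\alpha,q_{n+1}-2)\leq q_n$. With that substitution the rest of your argument (monotonicity, the reduction to first returns to the prefix) is sound and coincides with the paper's proof. One point where you are actually more careful than the paper: the middle block $t_n$ alternates between $01$ and $10$, so Lemma \ref{repetitioncentraux} as stated for $p01q$ must be complemented by its letter-exchanged version; your observation that the exchange $0\leftrightarrow 1$ preserves the repetition function is the clean way to dispose of this, and is simpler than routing through $c_{1-\alpha}$.
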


\begin{proof}
Let $(s_n)_{n\geq -1}$ be the standard sequence associated with $\alpha$ upon the construction of $c_\alpha=\lim s_n$. It is clear that $|s_n|=q_n$ for $n\geq 0$. We have
\begin{center}
	$c_\alpha =\displaystyle \lim_{n\geq 1} s_{n+2}= \lim_{n\geq 1} s_{n+1}s_n=\lim_{n\geq 1} s_{n+1}s_n^{--}=\lim_{n\geq 1} s_n^{--}t_ns_{n+1}^{--}$
\end{center}
where $t_n=10$ if $n$ is even, and $t_n=01$ if $n$ is odd. The words $s_n^{--}t_ns_{n+1}^{--}$ for $n\geq 2$ are the central prefixes of $c_\alpha$ written in their central factorisations. By lemma \ref{repetitioncentraux}, we have :
\begin{center}
	$r(c_\alpha,|s_n^{--}|+1)=|s_n^{--}|+2=|s_n|=q_n$
\end{center}
and since the prefix $s_{n+1}^{--}$ of $c_{\alpha}$ has a repetition at the index position $|s_n^{--}|+2$ in $c_\alpha$, we derive $r(c_\alpha,m)=q_n$ for $n\geq 2$ and $q_n-1\leq m \leq q_{n+1}-2$.
\end{proof}

\subsection{Rauzy graphs of sturmian words}

For the remainder of the paper, intervals $[a,b]$ are considered as integer interval, meaning that they denote the set of integers $k$ with $a\leq k\leq b$. We define the integer intervals $I_n$, for $n\geq 0$,
	\begin{center}
		$I_n=[q_n-1,q_{n+1}-2]$,
		
    $I_n^0=[q_n-1,q_n+q_{n-1}-2]$,
	\end{center}
and for $1\leq l \leq a_{n+1}-1$,
\begin{center}
	$I_n^l=[lq_n+q_{n-1}-1,(l+1)q_n+q_{n-1}-2]$.
\end{center}
They form a partition of $\mathbb{N}^*$ :
\begin{center}
	$\displaystyle \mathbb{N}^*=\bigcup _{n\geq 0}I_n= \bigcup _{n\geq 0}\bigcup_{l=0}^{a_{n+1}-1}I_n^l$
\end{center}
so that every natural integer $m\geq 1$ writes uniquely as $m=\max I_n^l -r=(l+1)q_n+q_{n-1}-2-r$ where $n\geq 1$, $0\leq l \leq a_{n+1}-1$ and $0\leq r < |I_n^l|$, keeping in mind that $|I_n^0|=q_{n-1}$ and $|I_n^l|=q_{n}$ for $1\leq l \leq a_{n+1}-1$. If $a_1=1$ or $a_1=2$ then $I_0$ is empty. If $a_1=1$ and $a_2=1$, the two intervals $I_0$ and $I_1$ are empty. This partition of $\mathbb{N}$ encodes the structure of Rauzy graph of sturmian words, and we use our results on the repetition function to derive the length of the cycles of the Rauzy graph of sturmian words. This result can be derived from the work of V. Berthé \cite{berthe2} on the frequency of factors of sturmian words, however our study goes a little further since we give more precisions on the path taken by the characteristic word in the next statements. We use the notation $u^*$ to denote the word $u$ from which its first letter is removed, and we recall that $t_n=10$ if $n$ is even, and $t_n=01$ if $n$ is odd.

\begin{prop}\label{rauzystructure}
For $m\in I_n^l$ with $n\geq 0$ and $0\leq l \leq a_{n+1}-1$, we have :
\begin{enumerate}
	\item one of the two cycles of $G_m$ is of length $q_n$, it is called the referent cycle of $G_m$,
	\item the other cycle is of length $lq_n+q_{n-1}$,
	\item the arrow $R_m\rightarrow R_m^*t_{n-1}^{-}$ belongs to the referent cycle, and the arrow $R_m\rightarrow R_m^*t_{n}^{-}$ belongs to the non-referent cycle. None of these two arrows belong to the common part.
\end{enumerate}
\end{prop}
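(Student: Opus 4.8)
The plan is to read all three statements off the walk that $c_\alpha$ itself traces in $G_m$, using only two facts already established: that $r(c_\alpha,m)=q_n$ for every $m\in I_n^l\subseteq I_n$ (the preceding corollary), and that the first repetition met by this walk is a return to the prefix $L_m$ (Lemma \ref{repetitioncarac}). First I would recall the shape of $G_m$: its unique in-degree-$2$ vertex is the left special factor $L_m=\mathbb{P}_m(c_\alpha)$, its unique out-degree-$2$ vertex is $R_m=\widetilde{L_m}$, and each of the two cycles is the common path from $L_m$ to $R_m$ followed by one branch from $R_m$ back to $L_m$. The walk $\mathbb{P}_m(c_\alpha)\to\mathbb{P}_m(T(c_\alpha))\to\cdots$ starts at $L_m$ and, $L_m$ having out-degree $1$, is forced along the common path until it first reaches $R_m$, then enters one branch and returns to $L_m$. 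By Lemma \ref{repetitioncarac} this first return occurs at time $r(c_\alpha,m)=q_n$, and since the vertices met before the return are pairwise distinct it is a simple cycle; a cycle through $L_m$ must use the common path and exactly one branch, so it is one full cycle of $G_m$. This gives statement $(1)$, with the referent cycle identified as the one the characteristic walk runs through first, and it also shows that the arrow taken out of $R_m$ by this walk, and only that one, lies on the referent cycle.

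Next I would determine the length $k$ of the common path, which equals the index of the first occurrence of $R_m$ in $c_\alpha$ (the walk being forced up to $R_m$); this is the step I expect to be the main obstacle. Let $P$ be the palindromic (central) prefix of $c_\alpha$ of length $\max I_n^l=(l+1)q_n+q_{n-1}-2$, which exists by the standard-word construction and satisfies $|P|\geq m$ since $m\in I_n^l$. As $P$ is palindromic and $L_m$ is its length-$m$ prefix, $R_m=\widetilde{L_m}$ is its length-$m$ suffix, so $R_m$ occurs ending at position $|P|$, i.e. at index $r:=\max I_n^l-m$. To rule out any earlier occurrence I would use the reversal symmetry of $P$: an occurrence of $R_m$ ending inside $P$ before position $|P|$ corresponds to an occurrence of $L_m$ starting inside $P$ at some position $\geq 2$. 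But Lemma \ref{repetitioncarac} shows the factor $L_m$ does not reappear before index $q_n$, and $r<q_n$ (indeed $r\leq|I_n^l|-1\leq q_n-1$), so within $P$ the factor $L_m$ occurs only as the prefix. Hence $R_m$ first occurs exactly at index $r$, and $k=r$.

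Statement $(2)$ then follows by an edge count: $G_m$ has $m+1$ vertices and $m+2$ edges, so the two cycle lengths $q_n$ and $C$ and the common path satisfy $q_n+C-k=m+2$; substituting $k=r=(l+1)q_n+q_{n-1}-2-m$ gives $C=lq_n+q_{n-1}$. For statement $(3)$, the referent branch is the arrow taken out of $R_m$ at time $k$, which appends the $(\max I_n^l+1)$-th letter of $c_\alpha$. Since $l+1\leq a_{n+1}$, the word $s_n^{l+1}s_{n-1}$ is a prefix of $s_{n+1}=s_n^{a_{n+1}}s_{n-1}$, hence a prefix of $c_\alpha$, of length $\max I_n^l+2$; it ends with $s_{n-1}$ and therefore with $t_{n-1}$, so the letter in question is the first letter of $t_{n-1}$, namely $t_{n-1}^{-}$, and the referent arrow is $R_m\to R_m^{*}t_{n-1}^{-}$. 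Because the two arrows out of $R_m$ carry the two distinct letters and $\{t_{n-1}^{-},t_n^{-}\}=\{0,1\}$, the non-referent arrow is $R_m\to R_m^{*}t_n^{-}$, and both emanate from the branch point $R_m$, so neither lies on the common part. The only remaining care is with the degenerate cases $k=0$ (the bispecial case $L_m=R_m$, occurring at the right endpoints $m=\max I_n^l$) and the small-index values where $q_{n-1}=1$ so that $s_{n-1}$ is a single letter, which I would check directly.
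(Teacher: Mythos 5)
Your proof is correct and follows essentially the same route as the paper's: the cycle of length $q_n$ is read off from $r(c_\alpha,m)=q_n$ together with Lemma \ref{repetitioncarac}, the second length comes from identifying the common part with the central prefix $s_n^{l+1}s_{n-1}^{--}$ and counting, and the referent arrow is identified from the prefix $s_n^{l+1}s_{n-1}$ of $c_\alpha$. Your justification of the common-part length (locating the first occurrence of $R_m$ via the palindromy of that central prefix and the non-recurrence of $L_m$ before index $q_n$) is more detailed than the paper's one-line appeal to the smallest bispecial factor of length at least $m$, but it is the same argument in substance.
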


\begin{proof}
1) Since two infinite words having same set of factors share the same Rauzy graph, we can restrict to the case $x=c_\alpha$. Since we have seen that $r(c_\alpha,m)=q_n$, the path
\begin{center}
	$\mathbb{P}_{m}(c_\alpha)\rightarrow \mathbb{P}_{m}(T(c_\alpha))\rightarrow \cdots \rightarrow \mathbb{P}_{m}(T^{r(c_\alpha,m)}(c_\alpha))$
\end{center}
defines a cycle of length $q_n$ in $G_m$.

2) The common part of the two cycles of $G_m$ is the shortest path starting at the left special factor $L_m$ and ending at the right sepcial factor $R_m$. The finite words $w$ defined by this path is both left and right special and hence is the smallest central factor of $x$ with length $|w|\geq m$, and its length then equals $(l+1)q_n+q_{n-1}-2-m$. But since the sum of the lengths of the two cycles equals the number of vertexes of $G_m$ added with the number of element in the common part, we obtain, where $\mu$ is the length of the non-referent cycle :
\begin{center}
	$q_n + \mu = m+1 + (l+1)q_n+q_{n-1}-1-m $
\end{center}
so that the length of the non-referent cycle equals $\mu=lq_n+q_{n-1}$. Notice that the lengths of the two cycles are relatively prime, so that the referent cycles is well defined by its length.

3) The comon part $L_m\rightarrow \cdots \rightarrow R_m$ corresponds to the central word of length $(l+1)q_{n}+q_{n-1}-2$, which is the word $s_n^{l+1}s_{n-1}^{--}$, and the referent cycle corresponds to the path
\begin{center}
	$\mathbb{P}_{m}(c_\alpha)\rightarrow \mathbb{P}_{m}(T(c_\alpha))\rightarrow \cdots \rightarrow \mathbb{P}_{m}(T^{r(c_\alpha,m)}(c_\alpha))$
\end{center}
and it remains to show that $s_n^{l+1}s_{n-1}^{-}$ is a prefix of $c_\alpha$ since $s_{n-1}$ ends with $t_{n-1}$. But $s_{n-1}$ is a prefix of $s_n$ and $s_{n+1}=s_n^{a_{n+1}}s_{n-1}$, so that the arrow $R_m\rightarrow R_m^*t_{n-1}^{-}$ belongs to the referent cycle. The fact that the arrow $R_m\rightarrow R_m^*t_{n}^{-}$ belongs to the non-referent cycle comes from the fact that the word $s_n^{l+1}s_{n-1}^{--}t_n$ is not a prefix of $c_\alpha$. The last remark is clear since the two arrows coming out of the right special factor cannot both be on the same cycle. \end{proof}

Toward a more precise understanding of the path taken by the characteristic word on its Rauzy graph, we define formally what turning around a cycle means for an infinite word $x$ as follows : 
\begin{itemize}
	\item We say that $x$ turns around a cycle of length $k$ in $G_m$ when $r(x,m)=k$ and when $\mathbb{P}_m(x)\rightarrow \mathbb{P}_m(T(x))\rightarrow \cdots \rightarrow \mathbb{P}_m(T^k(x))$ shares the same arrows as the concerned cycle.
	\item We say that $x$ turns $d$ times around a cycle of length $k$ if, for every $1\leq i \leq d-1$, $T^{ik}(x)$ turns around this cycle.
\end{itemize}

\begin{theo}\label{caractournerauzy}
For $m\in I_n^l$, the characteristic word $c_\alpha$ turns $a_{n+1}-l$ times around the referent cycle, and no more.
\end{theo}

\begin{proof}
We first consider the case $l=0$. The central word $s_n^{--}$ is a strict prefix of $L_m$ and $L_m$ is a prefix of the central word $s_n^{--}t_ns_{n-1}^{--}$. The word $z=s_{n+1}s_n^{--}=s_n^{a_{n+1}+1}s_{n-1}^{--}$ is central and so we have
\begin{center}
	$r(z,m)=q_n=r(T^{q_n}(z),m)=\cdots=r(T^{(a_{n+1}-1)q_n}(c_\alpha),m),$
\end{center}
and this shows that $c_\alpha$ turns at least $a_{n+1}$ times around the referent cycle.

Since $s_{n+1}s_n$ is a prefix of $c_\alpha$, the word $s_{n+1}s_n=zt_n=s_n^{a_{n+1}+1}s_{n-1}^{--}t_n$ is a prefix of $c_\alpha$ and $s_ns_{n-1}^{--}t_n$ is a prefix of $T^{a_{n+1}q_n}(c_\alpha)$, and from there we see that the word $T^{a_{n+1}q_n}(c_\alpha)$ first goes through the arrow $R_m\rightarrow R_m^*t_{n}^{-}$, which do not belong to the referent cycle. This shows that $c_\alpha$ does not turn $a_{n+1}+1$ around the referent cycle.

The case $l>0$ is similar : the central word $s_n^{l}s_{n-1}^{--}$ is a strict prefix of $L_m$, and $L_m$ is a prefix of the central word $s_n^{l+1}s_{n-1}^{--}$. The word $z=s_{n+1}s_n^{--}=s_n^{a_{n+1}+1}s_{n-1}^{--}$ is central, and so we have
\begin{center}
	$r(z,m)=q_n=r(T^{q_n}(z),m)=\cdots=r(T^{(a_{n+1}+1-l)q_n}(c_\alpha),m),$
\end{center}
and this shows that $c_{\alpha}$ turns at least $a_{n+1}-l$ times around the referent cycle.

Since $s_{n+1}s_n$ is a prefix of $c_\alpha$, the word $s_{n+1}s_n=zt_n=s_n^{a_{n+1}+1}s_{n-1}^{--}t_n$ is a prefix of $c_\alpha$ and $s_ns_{n-1}^{--}t_n$ is a prefix of $T^{a_{n+1}q_n}(c_\alpha)$, and from there we see that the word $T^{a_{n+1}q_n}(c_\alpha)$ first goes through the arrow $R_m\rightarrow R_m^*t_{n}^{-}$, which does not belong to the referent cycle. This shows that $c_\alpha$ does not turn $a_{n+1}+1-l$ times around the referent cycle. \end{proof}

We end by a short lemma that states that the non-referent cycle is characterized as the cycle for whom no sturmian words of slope $\alpha$ turns twice around.

\begin{lem}\label{tournenonreferent}
Let $x$ be a sturmian word of slope $\alpha$ and $m>0$. Then, in $G_m$, $x$ does not turn twice around the non-referent cycle. The non-referent cycle is characterized as the cycle such that no sturmian word of slope $\alpha$ turns twice around it.
\end{lem}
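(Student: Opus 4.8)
The plan is to exploit that all sturmian words of slope $\alpha$ share the same set of factors and the same Rauzy graph $G_m$, so that the existence of a word turning a prescribed number of times around a given cycle depends only on the common subshift and may be tested on $c_\alpha$. Fix $m\in I_n^l$ and write $\mu=lq_n+q_{n-1}$ for the length of the non-referent cycle, $q_n$ being the length of the referent cycle (Proposition \ref{rauzystructure}). Unravelling the definitions, $x$ turning $d$ times around a cycle of length $k$ is equivalent to the relevant shift of $x$ having a factor of length $dk+m$ and period $k$ whose induced path runs along that cycle; for the non-referent cycle the period word is a conjugate of the standard word $s_n^{l}s_{n-1}$ of length $\mu$, and the path uses the arrow $R_m\to R_m^{*}t_n^{-}$ rather than $R_m\to R_m^{*}t_{n-1}^{-}$. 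Thus the first assertion reduces to a statement about the maximal run of period $\mu$ through the non-referent arrow that the subshift can sustain.

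The heart of the proof, and the step I expect to be the main obstacle, is the following length bound: the longest factor of the subshift of period $\mu=lq_n+q_{n-1}$ realising the non-referent cycle has length exactly $2\mu+q_n-2$. I would establish it by the combinatorics of standard and central words, in exact duality with the forcing computation of Theorem \ref{caractournerauzy}: extending such a factor to a second full loop would require two consecutive insertions of $s_{n-1}$ spaced $\mu$ apart, i.e.\ after only $l$ copies of $s_n$, whereas the recurrence $s_{n+1}=s_n^{a_{n+1}}s_{n-1}$ (together with $s_{n-1}$ being a prefix of $s_n$) forces such insertions to be spaced $q_{n+1}=a_{n+1}q_n+q_{n-1}$ apart, with $l<a_{n+1}$; this is what pushes the path off the non-referent loop after one turn. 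Granting the bound, every $m\in I_n^{l}$ satisfies $m\geq q_n-1$, so turning twice would require a period-$\mu$ factor of length $2\mu+m\geq 2\mu+q_n-1$, strictly exceeding the maximum $2\mu+q_n-2$; hence no sturmian word of slope $\alpha$ turns twice around the non-referent cycle. Turning once remains possible, the bound being attained at $m=\max I_n^{l}=(l+1)q_n+q_{n-1}-2$, where $\mu+m=2\mu+q_n-2$.

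For the characterisation I would show that, by contrast, the referent cycle always admits a word turning twice around it, so that the property genuinely singles out the non-referent cycle. When $a_{n+1}-l\geq 2$ this is immediate from Theorem \ref{caractournerauzy}, since $c_\alpha$ already turns $a_{n+1}-l\geq 2$ times around the referent cycle. In the boundary case $l=a_{n+1}-1$ I would use the analogous, but strictly larger, bound for period $q_n$: the subshift contains a period-$q_n$ factor of length $q_{n+1}+2q_n-2$, and since $m\leq q_{n+1}-2$ throughout $I_n$ one has $2q_n+m\leq q_{n+1}+2q_n-2$, so a suitable shift beginning this maximal run traverses the referent cycle twice. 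As the two cycle lengths $q_n$ and $\mu$ are coprime (Proposition \ref{rauzystructure}) the cycles are distinct, and the property \emph{``no sturmian word of slope $\alpha$ turns twice around it''} holds for the non-referent cycle and fails for the referent one, which is precisely the asserted characterisation.
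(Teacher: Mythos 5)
Your route is viable but genuinely different from the paper's, and it is worth spelling out the difference. For the first assertion the paper does not make any quantitative estimate on periodic runs: it observes that if some sturmian word turned twice around the non-referent cycle, then the bispecial (central) word $w$ carried by the common part of the two cycles would admit all four extensions $0w0$, $1w0$, $0w1$, $1w1$ as factors, which violates the balance property — a two-line argument with no combinatorics of standard words. For the characterisation, the paper again avoids computation: reversal-stability of the factor set shows that the reversed arrows $0L_m^-\rightarrow L_m$ and $1L_m^-\rightarrow L_m$ split between the two cycles exactly as the outgoing arrows at $R_m$ do, so exactly one of $0c_\alpha$, $1c_\alpha$ enters the referent cycle and then rides $c_\alpha$'s first turn, producing a word turning twice; no case split on $l$ is needed. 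Your argument instead rests on the two critical-exponent-type bounds $2\mu+q_n-2$ (for period $\mu=lq_n+q_{n-1}$) and $q_{n+1}+2q_n-2$ (for period $q_n$). These bounds are correct — they are in substance the computations of Berthé, Holton and Zamboni \cite{holton} — and your numerology checks out: $2\mu+m\geq 2\mu+q_n-1$ just overshoots the first bound for every $m\in I_n^l$, while $2q_n+m\leq q_{n+1}+2q_n-2$ stays within the second for every $m\in I_n$, which handles the boundary case $l=a_{n+1}-1$ that Theorem \ref{caractournerauzy} alone does not cover. What your approach buys is sharper information (the exact maximal length of a periodic run along each cycle, which the balance argument cannot see); what it costs is that the bound you yourself identify as the main obstacle is only sketched: the step ``two consecutive insertions of $s_{n-1}$ must be spaced at least $q_{n+1}$ apart'' tacitly uses that any occurrence of $(s_n^{l}s_{n-1})^2$ is synchronised with the canonical $\{s_n,s_{n-1}\}$-factorisation of $c_\alpha$, a desubstitution/synchronisation lemma that is true but is itself comparable in weight to the statement being proved. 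If you fill that in (or quote \cite{holton}), your proof is complete; otherwise the paper's balance-plus-reversal argument remains the shorter and more self-contained one.
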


\begin{proof}
Since the set of factors of a sturmian word is stable by reversal, we see that if $s\rightarrow t$ is an arrow of $G_m$, then $\tilde{t}\rightarrow \tilde{s}$ is also an arrow of $G_m$. Since the two cycles of $G_m$ have different lengths, and since only one of the two arrows $R_m\rightarrow R_m^*t_{n-1}^{-}$ and $R_m\rightarrow R_m^*t_{n}^{-}$ belongs to the referent cycle, we deduce that only one of the two arrows $0L_m^{-}\rightarrow L_m$ and $1L_m^{-}\rightarrow L_m$ belongs to the referent cycle. The two words $0c_\alpha$ and $1c_\alpha$ are sturmian, and hence there exists a unique finite word $u$ of length $q_n$ such that $uc_\alpha$ turns around the referent cycle. Since $c_\alpha$ always turns at least once around the referent cycle (a property that could characterise the referent cycle), the word $uc_\alpha$ turns at least twice around the referent cycle. Since $x$ and $c_\alpha$ share the same set of factors, we can see that there exists a suffix of $x$ that turns twice around the referent cycle.

If there is a sturmian word $x$ that turns twice around the non-referent cycle, we see that the central word $w$ defined by the common part of the two cycles of $G_m$ is such that the four words $0w0$, $1w0$, $0w1$ and $1w1$ are factors of $x$. But this contradicts the balanced property of sturmian words.

\end{proof}

\section{$\alpha$-numbers and Formal intercepts of sturmian words}

\subsection{Formal intercepts of sturmian words}

The notion of intercepts of sturmian words is deeply involved in study of arithmetic sequences $(n\alpha+\rho \text{\texttt{\emph{[mod}} }1\text{\texttt{\emph{]}}})_{n\geq 0}$, along with numeration systems, see \cite{ ito1, justin, lesca, peltomaki2, ramshaw}. Sturmian words can be obtained as so-called "mechanical words" : with $\alpha$ and $\rho$ two real numbers with $0\leq \alpha \leq 1$, the upper mechanical words $\overline{s}_{\alpha, \rho}: \mathbb{N} \rightarrow \{0,1\}$ and lower mechanical words $\underline{s}_{\alpha, \rho}: \mathbb{N} \rightarrow \{0,1\}$ are defined, for $n\geq 0$ by :
\begin{center}
	$\overline{s}_{\alpha, \rho}(n)=\left\lfloor (n+1)\alpha+\rho \right\rfloor- \left\lfloor n\alpha+\rho \right\rfloor$,
	
		$\underline{s}_{\alpha, \rho}(n)=\left\lceil (n+1)\alpha+\rho \right\rceil- \left\lceil n\alpha+\rho \right\rceil$.
\end{center}
Sturmian words can also be obtained as coding of rotations. For $\rho$ a point on the circle, we consider the ergodic dynamical system associated to the rotation $R_\alpha$ with angle $\alpha$ :
\begin{center}
	$R_\alpha : x\in \mathbb{R}/\mathbb{Z} \longmapsto x+\alpha \in \mathbb{R}/\mathbb{Z}$,
\end{center}
with $\rho$ taken as a starting point of this dynamical system. With the partition given by the intervals $[0,1- \alpha [$ and $[1-\alpha, 1[$ tranferred on the circle, we get the equivalence $\underline{s}_{\alpha,\rho}(n)=0 \Leftrightarrow R_\alpha^n(\rho)\in [0,1- \alpha [$, for $n\geq 0$.

More precisely, for $\alpha=[0,a_1,a_2,a_3,\cdots]$ a slope with continuants $(q_n)_{n\geq -1}$, every real number $x$ satisfying $-\alpha\leq x \leq 1-\alpha$ writes uniquely in the form :
\begin{center}
	$\displaystyle x = \sum_{i\geq 0} b_{i+1}(p_i-\alpha q_i)$
\end{center}
where the coefficients $(b_i)_{i\geq 1}$ satisfy the Ostrowski condition (see \cite{berthe1, ramshaw}), that we study in the rest of the paper. This numeration system is called the Ostrowski numeration system, and is at the center of multiple work in different settings, see \cite{descombes2, hardylittlewood2, ito3, sidorov}. The particular case of the Fibonacci sequence, given by the slope $1/\varphi$ where $\varphi$ is the golden ratio, has been deeply studies as the Zeckendorf numeration system, see \cite{shallitauto, best, kilic, klein, zeckendorf2}.

For $\alpha=[0,a_1,a_2,a_3,\cdots]$ a slope with continuants $(q_n)_{n\geq -1}$, let $ N=\sum_{i=0}^{k-1}b_{i+1}q_i$ with $b_i\geq 0$ for all $i\geq 1$, and let $n\geq 1$. The following assertions are equivalents :
\begin{enumerate}[$i)$]
	\item $\forall l=1\ldots k$, \quad $\displaystyle\sum_{i=0}^{l-1}b_{i+1}q_i<q_{l}$,
	\item we have :	\begin{itemize}	\item $0\leq b_1 \leq a_1-1$,
		\item $\forall i\geq 1$, $0\leq b_i \leq a_i$,
		\item $\forall i\geq 1$, $b_{i+1}=a_{i+1}\Rightarrow b_i=0$.
	\end{itemize}
\end{enumerate}

The conditions in $ii)$ above on the sequence $(b_i)_{i\geq 0}$ are called the Ostrowski conditions. They garantee the unicity in the Ostrowski numeration system : every integer $N\in [0,q_n[$ write uniquely in the form
	\begin{center}
		$\displaystyle N=\sum_{i=0}^{n-1}b_{i+1}q_i$
	\end{center}
where $(b_i)_{i\geq 1}$ satisfies to the Ostrowski conditions. See the very beautiful and complete reference \cite{shallitauto} (theorem 3.9.1) for more details and proofs on these statements.

\begin{defi}\label{interceptdef}
For $\alpha=[0,a_1,a_2,\cdots]$ a slope with continuants $(q_n)_{n\geq-1}$, we define the set of $\alpha$-numbers as the set :
\begin{center}
	$\displaystyle\mathcal{I}_\alpha=\left\{\left.(k_n)_{n>0}\in\prod_{n>0}[0,q_{n}[ \ \right| \ \forall n\geq 0,\ k_n=k_{n+1} \text{\texttt{\emph{[mod}} }q_n\text{\texttt{\emph{]}}} \right\}$.
\end{center}
\end{defi}

If $\rho=(\rho_n)_{n\geq 0}$ is an $\alpha$-number, there exists a unique sequence of integers $(b_i)_{i\geq 1}$, satisfying the Ostrowski conditions, such that :
\begin{center}
	$\displaystyle \rho_n=\sum_{i=0}^{n-1}b_{i+1}q_i$
\end{center}
for all $n\geq 0$. In this case we write directly :
\begin{center}
	$\displaystyle \rho=\sum_{i=0}^{+\infty}b_{i+1}q_i$.
\end{center}

For $n>0$, we set :
\begin{center}
	$\Psi_n^{n+1}$ : $\begin{matrix}
		 [0,q_{n+1}[ & \longmapsto & [0,q_{n}[  \\
		 k & \longmapsto & k $\texttt{ [mod $q_n$]}$  \\
		\end{matrix}$,
\end{center}
and for integers $m\geq n >0$ :
\begin{center}
	$\Psi_n^{m}=\Psi_n^{n+1}\circ \Psi_{n+1}^{n+2}\circ \cdots \circ\Psi_{m-1}^{m}\ :\ [0,q_{m}[ \ \longrightarrow [0,q_{n}[$,
\end{center}
then
\begin{center}
	$\displaystyle \mathcal{I}_\alpha=\lim_{\longleftarrow}[0,q_n[=\left\{\left. (k_n)_{n>0}\in\prod_{n>0}[0,q_{n}[ \ \right| \ n\leq m \Longrightarrow \Psi_n^{m}(k_m)=k_n \right\}$
\end{center}
is the projective limit of the sets $[0,q_n[$ endowed with the functions $\Psi_n^{m}$. The projective limit gives rise to naturally defined functions $\Psi_m : \mathcal{I}_\alpha \rightarrow [0,q_m[$ for $m\geq 1$, defined by
\begin{center}
	$\displaystyle \Psi_m\left(\sum_{i\geq 0}b_{i+1}q_i\right)=\sum_{i=0}^{m-1}b_{i+1}q_i$.
\end{center}
where the coefficients $(b_i)_{i\geq 1}$ satisfy the Ostrowski conditions. This construction of $\alpha$-numbers is to be compared with the construction of $p$-adic numbers, and in the reference \cite{rama} can be found a similar construction for $p$-adic numbers.

\begin{prop}\label{lambdaprefixe}
Let $\rho=\sum_{i\geq 0}b_{i+1}q_i$ be an $\alpha$-number associated to a slope $\alpha$ with continuants $(q_n)_{n\geq-1}$, and $n\geq 1$. Let	
\begin{center}
	$\lambda_n=q_{n+1}+q_n-\rho_{n+1}-2$,
\end{center}
then
\begin{enumerate}
	\item the words $T^{\rho_{n}}(c_\alpha)$ and $T^{\rho_{n+1}}(c_\alpha)$ share the same prefixes of length $\lambda_n$,
\item if $b_{n+1}\neq 0$, then $\lambda_n$ is the length of the longest common prefix between $T^{\rho_{n}}(c_\alpha)$ and $T^{\rho_{n+1}}(c_\alpha)$,
\item the increasing sequence $(\lambda_n)_{n\geq 1}$ tends towards infinity with $n$.
\end{enumerate}
\end{prop}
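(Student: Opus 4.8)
The plan is to reduce everything to the behaviour of $c_\alpha$ under shifts by multiples of the continuant $q_n$, using $\rho_{n+1}=\rho_n+b_{n+1}q_n$. If $b_{n+1}=0$ then $T^{\rho_n}(c_\alpha)=T^{\rho_{n+1}}(c_\alpha)$ and assertion 1 is trivial, so I may assume $1\le b_{n+1}\le a_{n+1}$. The engine of the argument will be the exact value of a longest common prefix: for $1\le k\le a_{n+1}$, the words $c_\alpha$ and $T^{kq_n}(c_\alpha)$ agree on a prefix of length exactly $q_{n+1}+q_n-kq_n-2$ and differ at the next letter.

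To establish this I would read off the turning count from Theorem \ref{caractournerauzy}. The referent cycle of $G_m$ has length $q_n$ by Proposition \ref{rauzystructure}, and for $m\in I_n^l$ the word $c_\alpha$ turns exactly $a_{n+1}-l$ times around it; by definition of turning this means $\mathbb{P}_m(c_\alpha)=\mathbb{P}_m(T^{q_n}(c_\alpha))=\cdots=\mathbb{P}_m(T^{(a_{n+1}-l)q_n}(c_\alpha))$. Hence $\mathbb{P}_m(c_\alpha)=\mathbb{P}_m(T^{kq_n}(c_\alpha))$ precisely when $a_{n+1}-l\ge k$, i.e. $l\le a_{n+1}-k$. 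Since $\max I_n^{l}=(l+1)q_n+q_{n-1}-2$ (uniformly for $0\le l\le a_{n+1}-1$) and $q_{n+1}=a_{n+1}q_n+q_{n-1}$, the largest admissible length is $\max I_n^{a_{n+1}-k}=q_{n+1}+q_n-kq_n-2$, the claimed value. For the matching first difference I would use the \emph{and no more} clause of Theorem \ref{caractournerauzy}: when $2\le k\le a_{n+1}$, at $m=\min I_n^{a_{n+1}-k+1}$ the word turns only $k-1$ times, so $\mathbb{P}_m(c_\alpha)=\mathbb{P}_m(T^{(k-1)q_n}(c_\alpha))\ne\mathbb{P}_m(T^{kq_n}(c_\alpha))$; for the boundary case $k=1$, which escapes the interval family, I would instead invoke the corollary $r(c_\alpha,m)=q_n$ on $[q_n-1,q_{n+1}-2]$ together with $r(c_\alpha,q_{n+1}-1)=q_{n+1}>q_n$ to conclude $\mathbb{P}_{q_{n+1}-1}(T^{q_n}(c_\alpha))\ne\mathbb{P}_{q_{n+1}-1}(c_\alpha)$. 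I expect this exact computation, and in particular extracting the strict first difference uniformly over $k$, to be the main obstacle.

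Granting this lemma, assertions 1 and 2 follow by translating the comparison window. For infinite words the length $\mathrm{lcp}$ of the longest common prefix satisfies $\mathrm{lcp}(T^a w,T^{a+g}w)=\mathrm{lcp}(w,T^{g}w)-a$ whenever $a\le\mathrm{lcp}(w,T^{g}w)$, since two suffixes of $w$ agree at a position exactly when $w$ agrees at the two corresponding positions. Applying this with $w=c_\alpha$, $a=\rho_n$, $g=b_{n+1}q_n$, and using the lemma together with $\rho_n\le q_n-1\le q_{n+1}+q_n-b_{n+1}q_n-2$ (which holds because $b_{n+1}q_n\le a_{n+1}q_n=q_{n+1}-q_{n-1}$), I obtain $\mathrm{lcp}(T^{\rho_n}(c_\alpha),T^{\rho_{n+1}}(c_\alpha))=q_{n+1}+q_n-b_{n+1}q_n-2-\rho_n=\lambda_n$, which is precisely assertions 1 and 2.

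For assertion 3 the plan is a direct recurrence. From $q_{n+2}=a_{n+2}q_{n+1}+q_n$ and $\rho_{n+2}-\rho_{n+1}=b_{n+2}q_{n+1}$ one computes $\lambda_{n+1}-\lambda_n=(a_{n+2}-b_{n+2})q_{n+1}\ge 0$, so $(\lambda_n)$ is non-decreasing. It cannot be eventually constant: that would force $b_{n+2}=a_{n+2}$ for all large $n$, but the Ostrowski condition $b_{i+1}=a_{i+1}\Rightarrow b_i=0$ would then give $b_{n+1}=0$ for all large $n$, contradicting $b_{n+2}=a_{n+2}\ne 0$. Hence infinitely many increments are strictly positive, each at least $q_{n+1}\to+\infty$, so $\lambda_n\to+\infty$.
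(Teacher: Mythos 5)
Your proof is correct and follows essentially the same route as the paper's: both reduce the statement, via $\rho_{n+1}=\rho_n+b_{n+1}q_n$ and a shift of the comparison window, to the exact length of the longest common prefix of $c_\alpha$ and $T^{b_{n+1}q_n}(c_\alpha)$, which is extracted from Theorem \ref{caractournerauzy}. The only differences are bookkeeping: the paper fixes $m=q_n-1$ and counts that prefix as $m+(a_{n+1}-b_{n+1})q_n+r$ with $r=q_{n-1}-1$, whereas you sweep $m$ over the intervals $I_n^l$ and locate the first disagreement explicitly, and your part 3 replaces the paper's direct bound $\lambda_n\geq q_n-1$ by an eventual-constancy argument.
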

	
\begin{proof}
1) Let $m=q_n-1\in I_n^0$. We saw that the word $T^{b_{n+1}q_n}(c_\alpha)$ turns $a_{n+1}-b_{n+1}$ times around the referent cycle, before turning around the non-referent cycle. This shows that the two words $T^{b_{n+1}q_n}(c_\alpha)$ and $c_\alpha$ share the same prefix of length $m+(a_{n+1}-b_{n+1})q_n+r$, where $r$ is the length of the common part of the two cycles that constitute $G_m$. Since $m=q_n-1$, every vertex of the non-referent cycle is on the referent cycle. This implies that $r=q_{n-1}-1$, and the two words $T^{b_{n+1}q_n}(c_\alpha)$ and $c_\alpha$ share the same prefix of length $m+(a_{n+1}-b_{n+1})q_n+r=q_n-1+(a_{n+1}-b_{n+1})q_n+q_{n-1}-1$. We deduce that the two words 
\begin{center}
	$T^{\rho_n}(T^{b_{n+1}q_n}(c_\alpha))=T^{\rho_{n+1}}(c_\alpha)$ \quad and \quad $T^{\rho_n}(c_\alpha)$
\end{center}
share the same prefix of length
\begin{center}
$q_n+(a_{n+1}-b_{n+1})q_n+q_{n-1}-2-\rho_n =q_{n+1}+q_n-\rho_{n+1}-2 =\lambda_n$,
\end{center}
where the result comes from.

2) If $b_{n+1}\neq 0$, then the longest common prefix of the words $T^{b_{n+1}q_n}(c_\alpha)$ and $c_\alpha$ is of length $q_n-1+(a_{n+1}-b_{n+1})q_n+q_{n-1}-1$, and from there we get that the length of the longest common prefix of $T^{\rho_{n+1}}(c_\alpha)$ and $T^{\rho_n}(c_\alpha)$ equals $\lambda_n$.

3) We have
\begin{align*}
\lambda_{n+1}-\lambda_n&=q_{n+2}+q_{n+1}-q_{n+1}-q_n-(\rho_{n+2}-\rho_{n+1})\\&=(a_{n+2}-b_{n+2})q_{n+1}  \geq 0,
\end{align*}
from where we get that the sequence $(\lambda_n)_{n\geq 0}$ is increasing. Since $\rho_{n+1}<q_{n+1}$, we get $\lambda_n\geq q_n-1$ and this implies that $\lambda_n \rightarrow +\infty$ when $n\rightarrow +\infty$. \end{proof}

\begin{defi}\label{deftrho}
Let $\rho$ be an $\alpha$-number of the slope $\alpha$. We define the sturmian word $T^{\rho}(c_\alpha)$ of slope $\alpha$ of formal intercept $\rho$ as the word
\begin{center}
	$T^{\rho}(c_\alpha)=\lim T^{\rho_n}(c_\alpha)$
\end{center}
having the same prefix of length $q_n-1$ as $T^{\rho_n}(c_\alpha)$ for all $n\geq 1$.
\end{defi}

\begin{prop}\label{prefixecommun}
Let $\rho$ be an $\alpha$-number of the slope $\alpha$ and $n\geq 1$. Then the length of the longest common prefix of the words
\begin{center}
	$T^{\rho}(c_\alpha)$ \quad and \quad $T^{\rho_n}(c_\alpha)$
\end{center}
equals $\lambda_N$, where $N$ is the smallest integer $N\geq n$ such that $b_{N+1}\neq 0$, where the sequence $(\lambda_n)_{n\geq 1}$ is defined in proposition \ref{lambdaprefixe} above. If no such $N$ exists, the two words are equal.
\end{prop}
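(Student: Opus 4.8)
The plan is to reduce the statement to the single relevant index $N$ and then to pit the two halves of Proposition \ref{lambdaprefixe} against one another. First I would record that, by the very choice of $N$ as the smallest index $\geq n$ with $b_{N+1}\neq 0$, all the coefficients $b_{n+1},\ldots,b_N$ vanish, so that $\rho_n=\rho_{n+1}=\cdots=\rho_N$ and therefore $T^{\rho_n}(c_\alpha)=T^{\rho_N}(c_\alpha)$. This lets me replace $\rho_n$ by $\rho_N$ throughout and assume $b_{N+1}\neq 0$. If no such $N$ exists, then $b_{i+1}=0$ for every $i\geq n$, hence $\rho_m=\rho_n$ for all $m\geq n$; since $T^{\rho}(c_\alpha)$ agrees with $T^{\rho_m}(c_\alpha)=T^{\rho_n}(c_\alpha)$ on a prefix of length $q_m-1\to\infty$ by Definition \ref{deftrho}, the two words coincide, which settles this degenerate case.

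The core of the argument is to show that $T^{\rho}(c_\alpha)$ and $T^{\rho_{N+1}}(c_\alpha)$ share a prefix strictly longer than $\lambda_N$. By part 1 of Proposition \ref{lambdaprefixe}, consecutive words $T^{\rho_j}(c_\alpha)$ and $T^{\rho_{j+1}}(c_\alpha)$ agree on a prefix of length $\lambda_j$, and since $(\lambda_j)$ is non-decreasing, chaining these agreements for $N+1\leq j<m$ shows that $T^{\rho_{N+1}}(c_\alpha)$ and $T^{\rho_m}(c_\alpha)$ agree on a prefix of length at least $\lambda_{N+1}$ for every $m>N+1$. Combining this with the fact that $T^{\rho}(c_\alpha)$ agrees with $T^{\rho_m}(c_\alpha)$ on a prefix of length $q_m-1$, which exceeds $\lambda_{N+1}$ once $m$ is large, I obtain that $T^{\rho}(c_\alpha)$ and $T^{\rho_{N+1}}(c_\alpha)$ agree on a prefix of length at least $\lambda_{N+1}$.

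It then remains to compare with $\lambda_N$. Here I would invoke the computation $\lambda_{N+1}-\lambda_N=(a_{N+2}-b_{N+2})q_{N+1}$ from the proof of part 3, together with the Ostrowski condition $b_{N+2}=a_{N+2}\Rightarrow b_{N+1}=0$: since $b_{N+1}\neq 0$ forces $b_{N+2}<a_{N+2}$, we get the strict inequality $\lambda_{N+1}\geq\lambda_N+q_{N+1}>\lambda_N$. Consequently $T^{\rho}(c_\alpha)$ matches $T^{\rho_{N+1}}(c_\alpha)$ at least up to and including position $\lambda_N$. On the other hand, part 2 of Proposition \ref{lambdaprefixe} (applicable because $b_{N+1}\neq0$) says that $T^{\rho_N}(c_\alpha)$ and $T^{\rho_{N+1}}(c_\alpha)$ have longest common prefix exactly $\lambda_N$, so they first disagree at position $\lambda_N$. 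Reading these two facts off the common reference word $T^{\rho_{N+1}}(c_\alpha)$ shows that $T^{\rho}(c_\alpha)$ and $T^{\rho_N}(c_\alpha)$ agree on their first $\lambda_N$ letters but disagree at position $\lambda_N$, giving a longest common prefix of length exactly $\lambda_N$, as claimed.

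The step I expect to be the crux is the strict inequality $\lambda_{N+1}>\lambda_N$: without it one only gets that the longest common prefix is at least $\lambda_N$, and it is precisely the Ostrowski condition ruling out $b_{N+2}=a_{N+2}$ when $b_{N+1}\neq0$ that upgrades this to an equality. The remainder is bookkeeping with the limit defining $T^{\rho}(c_\alpha)$ and the monotonicity of $(\lambda_j)$.
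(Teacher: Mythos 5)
Your proof is correct and follows the same route the paper intends: reduce to the index $N$ via $\rho_n=\cdots=\rho_N$, then play part 2 of Proposition \ref{lambdaprefixe} (exact agreement up to $\lambda_N$ between $T^{\rho_N}(c_\alpha)$ and $T^{\rho_{N+1}}(c_\alpha)$) against the fact that $T^{\rho}(c_\alpha)$ agrees with $T^{\rho_{N+1}}(c_\alpha)$ strictly beyond $\lambda_N$. The paper's own proof is only a one-line sketch, and you have supplied the step it leaves implicit --- the strict inequality $\lambda_{N+1}>\lambda_N$, forced by the Ostrowski condition $b_{N+1}\neq 0\Rightarrow b_{N+2}\neq a_{N+2}$ --- which is exactly what upgrades ``at least $\lambda_N$'' to ``exactly $\lambda_N$''.
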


	\begin{proof}
	This is a consequence of the equality $\rho_n=\rho_k$ for every $n\leq k \leq N$ when $N$ exists. The second part is a consequence of the equality $\rho_n=\rho_k$ for all $n\leq k$, hence the result.
	\end{proof}

\begin{theo}\label{interceptsturmien}
Let $x$ be a sturmian word of slope $\alpha$. Then there exists a unique $\alpha$-number of the slope $\alpha$ such that
\begin{center}
	 $x=T^{\rho}(c_\alpha)$.
\end{center}
This $\alpha$-number is designated as the formal intercept of $x$.
\end{theo}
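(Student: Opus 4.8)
The plan is to build the formal intercept $\rho$ of $x$ directly from the family of finite prefixes of $x$, reading each one as a position along the referent cycle of a suitable Rauzy graph. The key remark is that for $m=q_n-1\in I_n^0$ the referent cycle of $G_m$ has length $q_n$ and, as noted in the proof of Proposition \ref{lambdaprefixe}, carries every vertex of $G_m$; since $G_m$ has exactly $m+1=q_n$ vertices, this referent cycle is Hamiltonian. Because $r(c_\alpha,q_n-1)=q_n$, the prefixes $\mathbb{P}_{q_n-1}(T^i(c_\alpha))$ for $i\in[0,q_n[$ are pairwise distinct, so the map $i\mapsto \mathbb{P}_{q_n-1}(T^i(c_\alpha))$ is an injection from a $q_n$-element set into the $q_n$ vertices of $G_{q_n-1}$, hence a bijection onto the factors of length $q_n-1$. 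As $x$ and $c_\alpha$ have the same slope they share the same factors, so $\mathbb{P}_{q_n-1}(x)$ is one of these vertices, and I would define $\rho_n\in[0,q_n[$ to be the unique integer with $\mathbb{P}_{q_n-1}(x)=\mathbb{P}_{q_n-1}(T^{\rho_n}(c_\alpha))$.

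Next I would verify that $\rho:=(\rho_n)_{n>0}$ is an $\alpha$-number, i.e. that $\rho_n\equiv\rho_{n+1}\pmod{q_n}$ for all $n$. Taking the prefix of length $q_n-1$ in the defining equality at level $n+1$ yields $\mathbb{P}_{q_n-1}(T^{\rho_n}(c_\alpha))=\mathbb{P}_{q_n-1}(x)=\mathbb{P}_{q_n-1}(T^{\rho_{n+1}}(c_\alpha))$, so everything reduces to showing that the path of $c_\alpha$ in $G_{q_n-1}$ is $q_n$-periodic on the range $[0,q_{n+1}-1]$, which contains $\rho_{n+1}$. This is exactly where Theorem \ref{caractournerauzy} enters: for $m=q_n-1$ (the case $l=0$) the word $c_\alpha$ turns $a_{n+1}$ times around the referent cycle before first taking the non-referent arrow out of $R_m$, and a short count places this first deviation at step $(q_{n-1}-1)+a_{n+1}q_n=q_{n+1}-1$. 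Up to that step the path stays on the Hamiltonian referent cycle and is therefore $q_n$-periodic, whence $\mathbb{P}_{q_n-1}(T^{\rho_{n+1}}(c_\alpha))=\mathbb{P}_{q_n-1}(T^{\rho_{n+1}\bmod q_n}(c_\alpha))$; injectivity of the bijection above then forces $\rho_n=\rho_{n+1}\bmod q_n$.

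Having produced the $\alpha$-number $\rho$, existence is immediate: by Definition \ref{deftrho} the word $T^{\rho}(c_\alpha)$ agrees with $T^{\rho_n}(c_\alpha)$, hence with $x$, on prefixes of length $q_n-1$ for every $n$, and these lengths tend to infinity, so $T^{\rho}(c_\alpha)=x$. Uniqueness uses the same bijection in reverse: if $\rho$ and $\rho'$ are $\alpha$-numbers with $T^{\rho}(c_\alpha)=T^{\rho'}(c_\alpha)=x$, then for each $n$ both $\rho_n$ and $\rho'_n$ lie in $[0,q_n[$ and send $c_\alpha$ to words sharing the prefix $\mathbb{P}_{q_n-1}(x)$, so injectivity gives $\rho_n=\rho'_n$ for all $n$ and thus $\rho=\rho'$. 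I expect the main obstacle to be the periodicity step of the second paragraph: one must pin down precisely how far $c_\alpha$ runs along the referent cycle before branching, so that the reduction modulo $q_n$ is legitimate over the entire range where $\rho_{n+1}$ can live. This is the point where the exact turning count of Theorem \ref{caractournerauzy}, rather than a mere \emph{at least once} statement, is indispensable.
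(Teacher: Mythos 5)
Your proposal is correct and takes essentially the same approach as the paper: both define $\rho_n$ as the unique residue in $[0,q_n[$ matching $\mathbb{P}_{q_n-1}(x)$ via the Hamiltonian referent cycle of $G_{q_n-1}$, verify compatibility modulo $q_n$ using the turning count of $c_\alpha$ from Theorem \ref{caractournerauzy}, and get uniqueness from the injectivity of $i\mapsto \mathbb{P}_{q_n-1}(T^i(c_\alpha))$ on $[0,q_n[$. Your single ``$q_n$-periodic up to step $q_{n+1}-1$'' statement merely packages in one stroke the two cases ($b<a_{n+1}$ and $b=a_{n+1}$) that the paper treats separately.
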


\begin{proof}
We consider the sequence defined for $n\geq 0$ by :
\begin{center}
	$\rho_n=\min\{k\geq 0 \ | \ x \text{ and } T^k(c_\alpha) \text{ share the same prefix of length } q_n-1\}$
\end{center}
and we show that the sequence $\rho=(\rho_n)_{n\geq 1}$ defines an $\alpha$-number. Let $n\geq 1$ and $m=q_n-1\in I_n^0$. Since the referent cycle is of length $m=q_n-1$, all the vertexes of $G_m$ are on the referent cycle, and this implies $0\leq\rho_n<q_n$. Since $T^{\rho_{n+1}}(c_\alpha)$, $T^{\rho_{n}}(c_\alpha)$ and $x$ share the same prefix of length $q_n-1$, the paths these words define in $G_m$ start at the same vertex.

We write $\rho_{n+1}=bq_n+c$ with $c<q_n$. Since $\rho_{n+1}=bq_n+c<q_{n+1}=a_{n+1}q_n+q_{n-1}$, we have $b\leq a_{n+1}$ and if $b= a_{n+1}$ then $c< q_{n-1}$. Because the characteristic word $c_\alpha$ turns $a_{n+1}$ times around the referent cycle, if $b<a_{n+1}$ then $T^{\rho_{n+1}}(c_\alpha)$ and $T^{c}(c_\alpha)$ start at the same vertex, and so share the same prefix of length $q_n-1$. Since the referent cycle is of length $q_n$ and $\rho_n<q_n$, we must have $c=\rho_n$. In the case where $b=a_{n+1}$, then $c<q_{n-1}$, so that $T^{\rho_{n+1}}(c_\alpha)$ starts on the common part of the cycles that constitute $G_m$, and we see that $T^{\rho_{n+1}}(c_\alpha)$ and $T^{c}(c_\alpha)$ start on the same vertex, which is on the referent cycle, showing that $\rho_n=c$. Hence $\rho_n=\rho_{n+1} $\texttt{ [mod $q_n$]}, and $\rho=(\rho_n)_{n\geq 1}$ defines an $\alpha$-number, and the word $x$ is easily found to have formal intercept $\rho$.

For unicity, notice that since $m=q_n-1$ and that the referent cycle is of length $q_n$, there can be only one $k<q_n$ such that $T^{k}(c_\alpha)$ and $T^{\rho}(c_\alpha)$ share the same prefix of length $q_n-1$, and since $\rho_n$ is a such $k$, every $\alpha$-number $\gamma$ such that $x=T^{\gamma}(c_\alpha)$ must satisfy $\gamma_n=\rho_n$. This shows unicity, and the theorem.

\end{proof}

\begin{prop}\label{interceptmoinsun}
The two sturmian words $0c_\alpha$ and $1c_\alpha$ are the sturmian words respectively associated to the formal intercepts :
\begin{center}
	$\displaystyle \sum_{i\geq 0} a_{2i+2}q_{2i+1}=(q_{2\left\lfloor n/2\right\rfloor}-1)_{n\geq 1}$ \quad and \quad $\displaystyle (a_1-1)+\sum_{i\geq 1} a_{2i+1}q_{2i}=(q_{2\left\lfloor n/2\right\rfloor +1}-1)_{n\geq 1}$,
\end{center}
and we denote them by $\sigma_0$ and $\sigma_1$.
\end{prop}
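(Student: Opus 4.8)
The plan is to invoke Theorem \ref{interceptsturmien} in the explicit form given by its proof. Since $c_\alpha$ is characteristic, both $0c_\alpha$ and $1c_\alpha$ are sturmian of slope $\alpha$, hence each admits a unique formal intercept, and its $n$-th coordinate is the unique $k\in[0,q_n[$ for which $T^k(c_\alpha)$ agrees with the word in question on the prefix of length $q_n-1$. I would therefore work in $G_m$ with $m=q_n-1\in I_n^0$, where the referent cycle has length $q_n$ equal to the number of vertices and is thus Hamiltonian: as $k$ runs through $[0,q_n[$, the vertices $\mathbb{P}_m(T^k(c_\alpha))$ sweep out every factor of length $m$ exactly once. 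The starting vertices of $0c_\alpha$ and $1c_\alpha$ in $G_m$ are exactly the two in-neighbours $0L_{m-1}$ and $1L_{m-1}$ of the left special vertex $L_m=\mathbb{P}_m(c_\alpha)$, so $(\sigma_0)_n$ and $(\sigma_1)_n$ are nothing but the positions of these two in-neighbours along the Hamiltonian referent cycle issued from $L_m$. It then suffices to locate those two vertices on the cycle.

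First I would find the immediate predecessor of $L_m$, i.e.\ the vertex at position $q_n-1$. The reversal symmetry used in the proof of Lemma \ref{tournenonreferent} sends arrows to arrows and, preserving lengths, maps the referent cycle to itself. Applying it to the referent out-arrow $R_m\to R_m^{*}t_{n-1}^{-}$ of Proposition \ref{rauzystructure} and using $R_m=\widetilde{L_m}$, the reversed arrow is the in-arrow $\ell L_{m-1}\to L_m$ with $\ell=t_{n-1}^{-}$, and it lies on the referent cycle. Hence the predecessor of $L_m$ carries the letter $t_{n-1}^{-}$, which is $0$ when $n$ is even and $1$ when $n$ is odd. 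Consequently $(\sigma_0)_n=q_n-1$ for $n$ even and $(\sigma_1)_n=q_n-1$ for $n$ odd.

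Next I would locate the other in-neighbour. Because $L_{m-1}$ is a central word it is palindromic, so $R_m^{*}=L_m^{-}$, which shows that $R_m\to L_m$ is an arrow and that $R_m=\ell' L_{m-1}$, where $\ell'$ is the last letter of $L_m$; thus the remaining in-neighbour of $L_m$ is the right special vertex $R_m$ itself. Since $R_m$ sits at the end of the common part of the two cycles, whose length for $m=q_n-1$ equals $q_{n-1}-1$ (as computed inside Propositions \ref{rauzystructure} and \ref{lambdaprefixe}), this in-neighbour occurs at position $q_{n-1}-1$. Combining the two positions by parity gives $(\sigma_0)_n=q_n-1$ for $n$ even and $q_{n-1}-1$ for $n$ odd, that is $(\sigma_0)_n=q_{2\lfloor n/2\rfloor}-1$, and symmetrically $(\sigma_1)_n$ equals the continuant of largest odd index not exceeding $n$, minus one. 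It remains to recognise these coordinate sequences as the announced Ostrowski sums: one checks that the coefficient families $b_j=a_j$ for even $j$ (all other $b_j=0$), and $b_1=a_1-1$, $b_j=a_j$ for odd $j\geq 3$ (all other $b_j=0$), satisfy the three Ostrowski conditions, so $\sum_{i\geq0}a_{2i+2}q_{2i+1}$ and $(a_1-1)+\sum_{i\geq1}a_{2i+1}q_{2i}$ are genuine $\alpha$-numbers, and that telescoping $q_{k+1}=a_{k+1}q_k+q_{k-1}$ collapses their partial sums to the same $q_{\cdot}-1$ values; uniqueness in Theorem \ref{interceptsturmien} then identifies $\sigma_0$ and $\sigma_1$.

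I expect the main obstacle to be the two structural identifications in the middle steps: pinning down, via reversal symmetry, which single-letter extension of $L_{m-1}$ lies on the referent cycle, and above all recognising that the second in-neighbour of $L_m$ is precisely the right special vertex $R_m$, which rests on the palindromicity of the central word $L_{m-1}$. Once these are secured, the two positions $q_n-1$ and $q_{n-1}-1$ are read off from the already-established length of the common part, and the remaining numeration-theoretic bookkeeping (Ostrowski admissibility and telescoping) is routine.
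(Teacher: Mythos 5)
Your proof is correct, but it runs in the opposite direction from the paper's. The paper starts from the two candidate $\alpha$-numbers, forms $x=T^{\rho}(c_\alpha)$, and shows $T(x)=c_\alpha$ by a common-prefix estimate ($x$ agrees with $T^{q_{2\lfloor n/2\rfloor}-1}(c_\alpha)$ on a prefix of length $q_{2\lfloor n/2\rfloor}-1$, hence $T(x)$ agrees with $T^{q_{2\lfloor n/2\rfloor}}(c_\alpha)$ and therefore with $c_\alpha$ on prefixes of unbounded length); it then separates the two words by uniqueness of the formal intercept and the observation that $T^{a_1-1}(c_\alpha)$ begins with the letter $1$. You instead compute the intercepts of $0c_\alpha$ and $1c_\alpha$ coordinatewise in $G_{q_n-1}$, reading $(\sigma_\ell)_n$ as the position of $\ell L_{m-1}$ on the Hamiltonian referent cycle. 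Your two structural identifications are sound: the reversal anti-automorphism does fix the referent cycle (the two cycle lengths $q_n$ and $q_{n-1}$ differ for the relevant $m\geq 1$), so the referent in-arrow of $L_m$ is the reversal of $R_m\to R_m^{*}t_{n-1}^{-}$ from Proposition \ref{rauzystructure}, placing $t_{n-1}^{-}L_{m-1}$ at position $q_n-1$; and since $L_{m-1}=s_n^{--}$ is palindromic, $R_m=\ell' L_{m-1}$ is the remaining in-neighbour, sitting at the end of the common part, at position $q_{n-1}-1$. Your route is longer and leans on more of the Rauzy-graph machinery, but it buys an intrinsic determination of which letter goes with which intercept (via the parity of $t_{n-1}^{-}$), where the paper's final sentence is terse, and it isolates the small structural fact that the second in-neighbour of $L_m$ is $R_m$ itself. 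One remark: your reading of the second coordinate sequence as ``the continuant of largest odd index not exceeding $n$, minus one'' is the right one; the displayed $(q_{2\lfloor n/2\rfloor+1}-1)_{n\geq 1}$ would exceed $q_n-1$ for even $n$ and is an off-by-one slip in the statement, as the paper's own telescoping $(a_1-1)+\sum_{i=1}^{k}a_{2i+1}q_{2i}=q_{2k+1}-1$ confirms.
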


\begin{proof}
We check first the two equalities of sequences. We clearly have $q_2-1=q_2-q_0=a_2q_1$, and the result for the first sequence is a consequence of the relation $q_{2(n+1)}-q_{2n}=a_{2n+2}q_{2n+1}$ and an easy induction, the proof for the second sequence being similar.

Let $x$ be the sturmian word of formal intercept $\sum_{i\geq 0} a_{2i+2}q_{2i+1}$, and let us compute $T(x)$. We know that $x$ and $T^{q_{2\left\lfloor n/2\right\rfloor}-1}(c_\alpha)$ share the same prefix of length $q_{2\left\lfloor n/2\right\rfloor}-1$, so that $T(x)$ and $T^{q_{2\left\lfloor n/2\right\rfloor}}(c_\alpha)$ share the same prefix of length $q_{2\left\lfloor n/2\right\rfloor}-2$, given that this quantity tends towards infinity with $n$. But $T^{q_{2\left\lfloor n/2\right\rfloor}}(c_\alpha)$ has the same prefix of length $q_{2\left\lfloor n/2\right\rfloor}-1$ as $c_\alpha$. We deduce from this that $T(x)=c_\alpha$. A similar proof goes for the second $\alpha$-number.

As a consequence of the unicity of the Ostrowski expansion of $\alpha$-number, the two associated sturmian words are distincts. We conclude by noticing that the word $T^{a_1-1}(c_\alpha)$ starts with the letter 1.
\end{proof}

In particular, if $c_\alpha$ is a strict suffix of $T^{\rho}(c_\alpha)$, then there exist $k\geq 0$ and $N\geq 0$ such that, either $\Psi_n(\rho)=q_{2\left\lfloor \frac{n}{2}\right\rfloor}-1-k$ for all $n\geq N$, either $\Psi_n(\rho)=q_{2\left\lfloor \frac{n}{2}\right\rfloor+1}-1-k$ for all $n\geq N$.

In the case of the Fibonacci sequence $(F_n)_{n\geq -1}$ defined by $F_{-1}=0$, $F_0=1$ and $F_{n+1}=F_n+F_{n-1}$ for $n\geq 0$, our formalism and the above result allow us to write the symbolic formulas :
\begin{center}
	$\displaystyle \sum_{n\geq 1} F_{2n}= -1$ \quad and \quad $\displaystyle \sum_{n\geq 0} F_{2n+1}= -1$
\end{center}
and is to be compared to the formula :
	\begin{center}
	$\displaystyle \sum_{i\geq 0} (p-1)p^i=-1$
\end{center}
for $p$-adic numbers. It is a remarkable and deep fact of the Ostrowski numeration system that in the case of $\alpha$-numbers, there are two distincts entities that can bear the symbol "-1", as opposed to only one for $p$-adic numbers.

\subsection{Repetition function, diophantine exponent and irationality measure}

In this subsection we give the values of the repetition function for sturmian words. The values of this function will allow us to obtain a general formula for the diophantine exponent of sturmian numbers, given in the remainder of this section. The works of Y. Bugeaud and D. Kim, or S. Moothathu, give asymptotical results on this function, however determining precise values for such complexity functions is an important topic in symbolic dynamics, see \cite{fici2, justin}.

\begin{prop}\label{valeurreppremier}
Let $x=T^{\rho}(c_\alpha)$ be a sturmian word of slope $\alpha$, where $\alpha$ has continuants $(q_n)_{n\geq -1}$, and $\rho=\sum_{i\geq 0}b_{i+1}q_i$ is an $\alpha$-number of the slope $\alpha$, and let $m\in I_n^l$ for $n\geq 0$ and $0\leq l \leq a_{n+1}-1$. We write $m=(l+1)q_n+q_{n-1}-2-r$ where $r$ is the length of the common part of the two cycles of the graph $G_m$. Then :
\begin{enumerate}
	\item $r(T^{\rho_{n+1}}(c_\alpha),m)=q_n$ if $0\leq\rho_{n+1}\leq (a_{n+1}-l-1)q_n+r$,
	\item $r(T^{\rho_{n+1}}(c_\alpha),m)=q_{n+1}-\rho_{n+1}$ if $(a_{n+1}-l-1)q_n+r<\rho_{n+1}< (a_{n+1}-l)q_n$,
	\item $r(T^{\rho_{n+1}}(c_\alpha),m)=lq_n+q_{n-1}$ if $(a_{n+1}-l)q_n\leq\rho_{n+1}\leq (a_{n+1}-l)q_n+r$,
	\item $r(T^{\rho_{n+1}}(c_\alpha),m)=q_{n+1}-\rho_{n+1}+q_n$ if $(a_{n+1}-l)q_n+r< \rho_{n+1}\leq q_{n+1}-1$.
\end{enumerate}
\end{prop}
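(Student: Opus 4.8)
The plan is to read $r(T^{\rho_{n+1}}(c_\alpha),m)$ directly off the path traced by $c_\alpha$ in its Rauzy graph $G_m$. Since $\rho_{n+1}=\Psi_{n+1}(\rho)$ is an ordinary integer, $T^{\rho_{n+1}}(c_\alpha)$ is a finite shift of $c_\alpha$, so the path it defines in $G_m$ is exactly the tail, starting at time $\rho_{n+1}$, of the path $\mathbb{P}_m(c_\alpha)\to\mathbb{P}_m(T(c_\alpha))\to\cdots$, and by definition $r(T^{\rho_{n+1}}(c_\alpha),m)$ is the number of pairwise distinct vertexes this tail meets before its first repetition. First I would fix the geometry of $G_m$ from Proposition \ref{rauzystructure}: it is the union of a referent cycle of length $q_n$ and a non-referent cycle of length $\mu=lq_n+q_{n-1}$, glued along a common part of $r$ arrows (hence $r+1$ vertexes) running from the left special vertex $L_m$ to the right special vertex $R_m$, for a total of $m+1$ vertexes. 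I would coordinatize the referent cycle by positions $0,1,\dots,q_n-1$, with $L_m$ at $0$, the common part at $0,\dots,r$ (so $R_m$ at $r$), and the referent branch at $r+1,\dots,q_n-1$.

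Next I would describe the trajectory of $c_\alpha$ using Theorem \ref{caractournerauzy}: starting at $L_m$ at time $0$, it loops the referent cycle exactly $a_{n+1}-l$ times, so it sits at $L_m$ at each time $iq_n$ for $0\le i\le a_{n+1}-l$; at time $(a_{n+1}-l)q_n$ it runs down the common part, reaches $R_m$ at time $(a_{n+1}-l)q_n+r$, and there diverges onto the non-referent branch, returning to $L_m$ at time $(a_{n+1}-l)q_n+\mu=q_{n+1}$. By Lemma \ref{tournenonreferent} it does not take the non-referent branch a second time, so at the following passage through $R_m$ it re-enters the referent branch. This single picture, together with the recurrences $q_{n+1}=a_{n+1}q_n+q_{n-1}$ and $\mu=lq_n+q_{n-1}$, determines all four values.

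The heart of the argument is then a four-way case split on where time $\rho_{n+1}$ falls on this trajectory, walking forward to the first repeated vertex in each case. In range (1), $\rho_{n+1}\le(a_{n+1}-l-1)q_n+r$ leaves room to complete a full referent loop before the divergence at time $(a_{n+1}-l)q_n+r$, so the first repetition occurs after $q_n$ steps and the value is $q_n$. In range (3), $\rho_{n+1}$ lands on the final approach along the common part, so the tail runs once around the non-referent cycle and repeats after $\mu=lq_n+q_{n-1}$ steps. In ranges (2) and (4) the tail starts respectively on the referent branch (too late to reloop) and on the non-referent branch; in both, the first repeated vertex is $L_m$, reached after traversing the non-referent cycle in range (2) and the non-referent tail followed by one referent loop in range (4). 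Counting the distinct vertexes and substituting the two recurrences converts the step counts $q_{n+1}-\rho_{n+1}$ and $q_{n+1}-\rho_{n+1}+q_n$ into the stated formulas.

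I expect the only real difficulty to be the careful bookkeeping in ranges (2) and (4): one must verify that no vertex recurs before reaching $L_m$, and this is exactly where Lemma \ref{tournenonreferent} is indispensable, for if the word took the non-referent branch twice it would revisit a non-referent intermediate vertex earlier and spoil the count, so ruling this out pins the first repetition at $L_m$. The remaining verifications are routine, namely that the four ranges partition $[0,q_{n+1}-1]$, that $r<q_n$ and $r<\mu$ so the interval endpoints are correctly ordered, and the degenerate case $r=0$ with $L_m=R_m$.
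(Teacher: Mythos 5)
Your proposal is correct and follows essentially the same route as the paper's proof: both rest on the cycle lengths and arrow placement from Proposition \ref{rauzystructure}, the fact that $c_\alpha$ turns exactly $a_{n+1}-l$ times around the referent cycle (Theorem \ref{caractournerauzy}), and Lemma \ref{tournenonreferent} to force the return onto the referent cycle in case (4), with the same four-way split according to where $T^{\rho_{n+1}}(c_\alpha)$ starts on the trajectory. Your explicit coordinatisation of the path is only a presentational variant of the paper's ``starts on / turns around'' phrasing, and your identification of where Lemma \ref{tournenonreferent} is indispensable matches the paper exactly.
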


\begin{proof}
1) Since the word $c_\alpha$ turns $a_{n+1}-l$ times around the referent cycle, if $0\leq\rho_{n+1}\leq (a_{n+1}-l-1)q_n+r$, then $T^{\rho_{n+1}}(c_\alpha)$ starts on a vertex belonging to the referent cycle and turns around the referent cycle. This implies that
\begin{center}
	$r(T^{\rho_{n+1}}(c_\alpha),m)=q_n$.
\end{center}

2) If $(a_{n+1}-l-1)q_n+r<\rho_{n+1}< (a_{n+1}-l)q_n$, then $T^{\rho_{n+1}}(c_\alpha)$ starts on the referent cycle, but does not turn around it. Since $T^{(a_{n+1}-l)q_n-\rho_{n+1}}(T^{\rho_{n+1}}(c_\alpha))$ starts on the vertex corresponding to the left special factor $L_m$, and that this word does not turn around the referent cycle, it must turn around the non-referent cycle, and we have :
\begin{center}
	$r(T^{\rho_{n+1}}(c_\alpha),m)=(a_{n+1}-l)q_n-\rho_{n+1}+lq_n+q_{n-1}=q_{n+1}-\rho_{n+1}$.
\end{center}

3) If $(a_{n+1}-l)q_n\leq\rho_{n+1}\leq (a_{n+1}-l)q_n+r$, then $T^{\rho_{n+1}}(c_\alpha)$ starts on the common part of the two cycles of $G_m$ of $x$. Since this word does not turn around the referent cycle, it turns around the non-referent cycle, and we have :
\begin{center}
	$r(T^{\rho_{n+1}}(c_\alpha),m)=lq_n+q_{n-1}$.
\end{center}

4) Notice that the prefix of length $m$ of $T^{q_{n+1}-1}(c_\alpha)$ is the only vertex $w$ of $G_m$ such that the arrow $w\rightarrow L_m$ does not belong to the referent cycle. This shows that if $(a_{n+1}-l)q_n+r< \rho_{n+1}\leq q_{n+1}-1$, then $T^{\rho_{n+1}}(c_\alpha)$ starts on a vertex that is on the non-referent cycle and that is not on the referent cycle. Since a sturmian word cannot turn twice around the non-referent cycle, the word $T^{q_{n+1}-\rho_{n+1}}(T^{\rho_{n+1}}(c_\alpha))$ starts at the point $L_m$ of $G_m$ and turns around the referent cycle. This shows that :
\begin{center}
		$r(T^{\rho_{n+1}}(c_\alpha),m)=q_{n+1}-\rho_{n+1}+q_n$.
\end{center}

\end{proof}

\begin{coro}
Let $x=T^{\rho}(c_\alpha)$ be a sturmian word of slope $\alpha$, where $\alpha$ has continuants $(q_n)_{n\geq -1}$, $\rho=\sum_{i\geq 0}b_{i+1}q_i$ is an $\alpha$-number of the slope $\alpha$, and let $m\in I_n=[q_n-1, q_{n+1}-1[$ for $n\geq 0$. Then :
\begin{enumerate}
	\item we have $r(T^{\rho_{n+1}}(c_\alpha),m)=q_n$ if $q_n-1\leq m \leq q_{n+1}-\rho_{n+1}-2$, this case does not appear if $b_{n+1}=a_{n+1}$, or if $b_{n+1}=a_{n+1}-1$ and $b_n\neq 0$,
	\item we have $r(T^{\rho_{n+1}}(c_\alpha),m)=q_{n+1}-\rho_{n+1}$ if $q_{n+1}-\rho_{n+1}-1\leq m\leq q_{n+1}-b_{n+1}q_n-2$, this case does not appear if $b_{n+1}=a_{n+1}$,
	\item if $m\leq q_{n+1}+q_n-\rho_{n+1}-2$ : \begin{itemize}
		\item we have $r(T^{\rho_{n+1}}(c_\alpha),m)=q_{n-1}$ if $b_{n+1}=a_{n+1}$, for $q_n-1\leq m$,
		\item we have $r(T^{\rho_{n+1}}(c_\alpha),m)=q_{n+1}-b_{n+1}q_n$ if $0<b_{n+1}<a_{n+1}$, for $q_{n+1}-b_{n+1}q_n-1\leq m$,
	\end{itemize}
these cases do not appear if $b_{n+1}=0$,
	
	\item we have $r(T^{\rho_{n+1}}(c_\alpha),m)=q_{n+1}-\rho_{n+1}+q_n$ if $q_{n+1}-\rho_{n+1}+q_n-1\leq m \leq q_{n+1}-2$, this case does not appear if $b_{n+1}=0$.
\end{enumerate}
\end{coro}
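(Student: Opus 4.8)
The plan is to derive this corollary directly from Proposition \ref{valeurreppremier} by a change of point of view: there, the length $m$ is fixed and the four cases are cut out according to the position of $\rho_{n+1}$, whereas here $\rho_{n+1}$ is fixed and we let $m$ range over the whole interval $I_n$, reading off on which sub-range of $I_n$ each of the four cases of Proposition \ref{valeurreppremier} occurs. Note that the quantity to be computed, $r(T^{\rho_{n+1}}(c_\alpha),m)$, is literally the same object as in that proposition, so no new dynamical input is needed and the argument is a reorganisation. First I would recall that for $m\in I_n^l$ one writes $m=(l+1)q_n+q_{n-1}-2-r$, and rewrite the three thresholds appearing in Proposition \ref{valeurreppremier} in terms of $m$ and $r$: one finds $(a_{n+1}-l-1)q_n+r=q_{n+1}-2-m$, then $(a_{n+1}-l)q_n+r=q_{n+1}+q_n-2-m$, and crucially $(a_{n+1}-l)q_n=q_{n+1}+q_n-2-(m+r)$. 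Since $m+r=(l+1)q_n+q_{n-1}-2$ is constant on each block $I_n^l$, the middle threshold $(a_{n+1}-l)q_n$ is \emph{constant} on $I_n^l$, while the other two are strictly decreasing in $m$.

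Writing $\rho_{n+1}=b_{n+1}q_n+\rho_n$ with $0\le\rho_n<q_n$, I would then isolate the unique critical block $l_0=a_{n+1}-b_{n+1}$. Comparing $\rho_{n+1}$ with the constant middle threshold shows that for $l<l_0$ one always has $\rho_{n+1}<(a_{n+1}-l)q_n$, so only cases $(1)$ and $(2)$ of Proposition \ref{valeurreppremier} occur there, with values $q_n$ and $q_{n+1}-\rho_{n+1}$, both independent of $l$; for $l>l_0$ one always has $\rho_{n+1}>(a_{n+1}-l)q_n+r$, so only case $(4)$ occurs, with value $q_{n+1}-\rho_{n+1}+q_n$; and on the block $l=l_0$ one has $(a_{n+1}-l_0)q_n=b_{n+1}q_n\le\rho_{n+1}$, so only cases $(3)$ and $(4)$ occur, the case $(3)$ value being $l_0q_n+q_{n-1}=q_{n+1}-b_{n+1}q_n$. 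Translating the transitions back into $m$, the $(1)\!\to\!(2)$ crossing happens at $m=q_{n+1}-\rho_{n+1}-2$, the $(2)\!\to\!(3)$ crossing at the block boundary $m=q_{n+1}-b_{n+1}q_n-2$, and the $(3)\!\to\!(4)$ crossing at $m=q_{n+1}+q_n-\rho_{n+1}-2$; one checks these three values are weakly increasing, using $0\le\rho_n<q_n$, so that the four announced $m$-ranges indeed tile $I_n=[q_n-1,q_{n+1}-2]$ and carry the stated constant values.

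It remains to settle the ``does not appear'' clauses, which amounts to deciding when each $m$-range is empty relative to $I_n$. Case $(1)$ is nonempty exactly when $q_{n+1}-\rho_{n+1}-2\ge q_n-1$, i.e. $\rho_{n+1}\le(a_{n+1}-1)q_n+q_{n-1}-1$; substituting $\rho_{n+1}=b_{n+1}q_n+\rho_n$ and invoking the Ostrowski lower bound $\rho_n\ge b_nq_{n-1}$ shows this fails precisely when $b_{n+1}=a_{n+1}$, or when $b_{n+1}=a_{n+1}-1$ and $b_n\ne0$. The emptiness of cases $(2),(3),(4)$ in the degenerate situations $b_{n+1}=0$ and $b_{n+1}=a_{n+1}$ is obtained the same way, using additionally that $b_{n+1}=a_{n+1}$ forces $b_n=0$ and hence $\rho_n<q_{n-1}$, which is exactly what places the critical block at $l_0=0$ and produces the special value $q_{n-1}$ for case $(3)$. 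The part I expect to require the most care is precisely this bookkeeping around the critical block $l_0$ together with the derivation of the non-appearance condition ``$b_{n+1}=a_{n+1}-1$ and $b_n\ne0$'' for case $(1)$, since it is the only point where the full strength of the Ostrowski inequality $\rho_n\ge b_nq_{n-1}$ is needed rather than the cruder bound $\rho_n<q_n$.
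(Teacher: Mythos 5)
Your proposal is correct and follows essentially the same route as the paper: both derive the corollary by taking the four cases of Proposition \ref{valeurreppremier} and converting the conditions on $\rho_{n+1}$ into conditions on $m$ via the relation $m=(l+1)q_n+q_{n-1}-2-r$, with the non-appearance clauses settled by the Ostrowski inequality ($\rho_n\geq q_{n-1}$ if and only if $b_n\neq 0$). Your organisation around the critical block $l_0=a_{n+1}-b_{n+1}$ and the explicit check that the four $m$-ranges tile $I_n$ is a somewhat more systematic presentation of the same computation, not a different argument.
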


\begin{proof} We keep the notations of \ref{valeurreppremier}.

1) According to \ref{valeurreppremier}.1),  $r(T^{\rho_{n+1}}(c_\alpha),m)=q_n$ if $0\leq\rho_{n+1}\leq (a_{n+1}-l-1)q_n+r$, which with the relation $m=(l+1)q_n+q_{n-1}-2-r$ provides the inequality :
\begin{center}
	$0\leq \rho_{n+1} \leq q_{n+1}-m-2$
\end{center}
which leads, keeping in mind $m\in I_n$, to :
\begin{center}
	$q_n-1\leq m \leq q_{n+1}-\rho_{n+1}-2$.
\end{center}
This situation cannot appear if $q_{n+1}-\rho_{n+1}-2<q_n-1$, which happens exactly when $b_{n+1}=a_{n+1}$, or when $b_{n+1}=a_{n+1}-1$ and $b_n\neq 0$.

2) According to \ref{valeurreppremier}.2), $r(T^{\rho_{n+1}}(c_\alpha),m)=q_{n+1}-\rho_{n+1}$ if $(a_{n+1}-l-1)q_n+r<\rho_{n+1}< (a_{n+1}-l)q_n$, which implies $b_{n+1}=a_{n+1}-l-1$, and since $l\geq 0$, this case does not happen if $b_{n+1}=a_{n+1}$. The inequality
\begin{center}
	$(a_{n+1}-l-1)q_n+r<\rho_{n+1}< (a_{n+1}-l)q_n$
\end{center}
leads to :
\begin{center}
	$b_{n+1}q_n+r<\rho_{n+1}<(b_{n+1}+1)q_n$
\end{center}
for which the inequality on the right is trivial. The inequality on the left provides, with the help of the relation $m=(l+1)q_n+q_{n-1}-2-r$,
\begin{center}
	$q_{n+1}-2-\rho_{n+1}< m \leq \max I_n^l=(a_{n+1}-b_{n+1})q_n+q_{n-1}-1=q_{n+1}-b_{n+1}q_n$
\end{center}
and this shows 2).

3) According to \ref{valeurreppremier}.3), $r(T^{\rho_{n+1}}(c_\alpha),m)=lq_n+q_{n-1}$ if $(a_{n+1}-l)q_n\leq\rho_{n+1}\leq (a_{n+1}-l)q_n+r$. This implies that $b_{n+1}=a_{n+1}-l$, which is to say that $l=a_{n+1}-b_{n+1}$. Since $0\leq l \leq a_{n+1}-1$, this case cannot happen if $b_{n+1}=0$. On the other hand, the inequality
\begin{center}
	$(a_{n+1}-l)q_n\leq\rho_{n+1}\leq (a_{n+1}-l)q_n+r$
\end{center}
leads, with the help of the relation $m=(l+1)q_n+q_{n-1}-2-r$, to
\begin{center}
	$b_{n+1}q_n\leq\rho_{n+1} \leq q_{n+1}+q_n-2-m$
\end{center}
the inequality on the left being trivial, and this provides $m\leq q_{n+1}+q_n-\rho_{n+1}-2$. The different cases come from the fact that $m\in I_n^{a_{n+1}-b_{n+1}}$.

4) According to \ref{valeurreppremier}.4), $r(T^{\rho_{n+1}}(c_\alpha),m)=q_{n+1}-\rho_{n+1}+q_n$ if 
\begin{center}
	$(a_{n+1}-l)q_n+r< \rho_{n+1}\leq q_{n+1}-1$,
\end{center}
the inequality on the right being trivial. The inequality on the left leads to
\begin{center}
	$q_{n+1}+q_n-\rho_{n+1}-2 \leq m$
\end{center}
which is possible, given $m\in I_n$, only if $q_n\leq \rho_{n+1}$, which is equivalent to $b_{n+1}\neq 0$.
\end{proof}

\begin{prop}
Let $x=T^{\rho}(c_\alpha)$ be a sturmian word of slope $\alpha$, where $\alpha$ has continuants $(q_n)_{n\geq -1}$, $\rho=\sum_{i\geq 0}b_{i+1}q_i$ is an $\alpha$-number of the slope $\alpha$, and let $m\in I_n^l$ for $n\geq 0$ and $0\leq l \leq a_{n+1}-1$. We write $m=(l+1)q_n+q_{n-1}-2-r$ where $r$ is the length of the common part of the two cycles of $G_m$.

If $b_{n+2}\neq a_{n+2}$, then $r(x,m)=r(T^{\rho_{n+1}}(c_\alpha),m)$. If $b_{n+2}=a_{n+2}$, then $r(x,m)=r(T^{\rho_{n+2}}(c_\alpha),m)$.
\end{prop}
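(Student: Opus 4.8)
The plan is to reduce the whole statement to a single principle: for any word $w$, the value $r(w,m)$ is completely determined by the prefix $\mathbb{P}_{m+r(w,m)}(w)$, since checking that $\mathbb{P}_m(w),\ldots,\mathbb{P}_m(T^{r(w,m)-1}(w))$ are pairwise distinct and that $\mathbb{P}_m(T^{r(w,m)}(w))$ repeats an earlier one only requires reading the first $m+r(w,m)$ letters of $w$. Hence, if two words share a common prefix of length at least $m+r(w,m)$, they take the same value of the repetition function at $m$. I will apply this with $w$ equal to one of the truncations $T^{\rho_{n+1}}(c_\alpha)$ or $T^{\rho_{n+2}}(c_\alpha)$, whose repetition functions are given by Proposition \ref{valeurreppremier}, and with the common-prefix lengths supplied by Propositions \ref{lambdaprefixe} and \ref{prefixecommun}.

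First I would treat the case $b_{n+2}\neq a_{n+2}$. By Proposition \ref{prefixecommun} the longest common prefix of $x$ and $T^{\rho_{n+1}}(c_\alpha)$ has length $\lambda_N\geq\lambda_{n+1}$, and the fundamental recurrence gives
$$\lambda_{n+1}=(a_{n+2}-b_{n+2})q_{n+1}+q_{n+1}+q_n-\rho_{n+1}-2.$$
It then suffices to check $\lambda_{n+1}\geq m+r(T^{\rho_{n+1}}(c_\alpha),m)$ in each of the four regimes of Proposition \ref{valeurreppremier}. The extremal regime is the fourth, where $r(T^{\rho_{n+1}}(c_\alpha),m)=q_{n+1}-\rho_{n+1}+q_n$; substituting $m=(l+1)q_n+q_{n-1}-2-r$ the quantity
$$\lambda_{n+1}-m-r(T^{\rho_{n+1}}(c_\alpha),m)=(a_{n+2}-b_{n+2})q_{n+1}-\bigl((l+1)q_n+q_{n-1}-r\bigr)$$
is nonnegative, because $a_{n+2}-b_{n+2}\geq 1$ forces the first term to be at least $q_{n+1}=a_{n+1}q_n+q_{n-1}$, while $l\leq a_{n+1}-1$ and $r\geq 0$ bound the subtracted bracket by $q_{n+1}$. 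The other three regimes are handled by the same substitution and give strictly larger slack. The principle above then yields $r(x,m)=r(T^{\rho_{n+1}}(c_\alpha),m)$.

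For the case $b_{n+2}=a_{n+2}$, the Ostrowski conditions provide two facts that drive the argument: $b_{n+1}=0$, so $\rho_{n+1}=\rho_n<q_n$, and — since $b_{n+2}\neq 0$ — one cannot have $b_{n+3}=a_{n+3}$, whence $a_{n+3}-b_{n+3}\geq 1$. By Proposition \ref{prefixecommun} the longest common prefix of $x$ and $T^{\rho_{n+2}}(c_\alpha)$ has length at least $\lambda_{n+2}$, and the forced inequality $a_{n+3}-b_{n+3}\geq 1$ together with $\rho_{n+2}<q_{n+2}$ gives the lower bound $\lambda_{n+2}\geq q_{n+2}+q_{n+1}-2\geq 2q_{n+1}-2$. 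Since $m\leq q_{n+1}-2$ and the general bound $r(T^{\rho_{n+2}}(c_\alpha),m)\leq p(c_\alpha,m)=m+1\leq q_{n+1}-1$ hold, we obtain $m+r(T^{\rho_{n+2}}(c_\alpha),m)\leq 2q_{n+1}-3<\lambda_{n+2}$, and the principle gives $r(x,m)=r(T^{\rho_{n+2}}(c_\alpha),m)$.

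The main obstacle is the bookkeeping in the first case: one must verify $\lambda_{n+1}\geq m+r(T^{\rho_{n+1}}(c_\alpha),m)$ across all four regimes of Proposition \ref{valeurreppremier}, and the cancellations only close because of the precise interplay between the bound $l\leq a_{n+1}-1$, the nonnegativity of the common-part length $r$, and the gap $a_{n+2}-b_{n+2}\geq 1$. The crude complexity bound used in the second case is far too weak here, since $r(T^{\rho_{n+1}}(c_\alpha),m)$ is large exactly when $\rho_{n+1}$ is large and only its exact value keeps pace with $\lambda_{n+1}$. A secondary point needing care is the correct off-by-one in the determining-prefix principle and the observation that the Ostrowski constraint $b_{n+3}\neq a_{n+3}$ is genuinely forced in the second case, as this is precisely what makes $\lambda_{n+2}$ large enough.
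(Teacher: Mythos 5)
Your proof is correct, and it splits into two halves of different character relative to the paper. For the case $b_{n+2}\neq a_{n+2}$ you follow essentially the paper's own route: the paper likewise reduces the claim to the inequality $m+r(T^{\rho_{n+1}}(c_\alpha),m)\leq\lambda_{n+1}\leq\lambda_N$ and verifies it regime by regime against Proposition \ref{valeurreppremier} (the paper actually proves the slightly stronger bound with the common-part length $r$ added on the left in three of the four regimes); your identification of the fourth regime as the extremal one is accurate — there the slack $(a_{n+2}-b_{n+2})q_{n+1}-((l+1)q_n+q_{n-1}-r)$ can vanish, whereas the other three regimes leave slack at least $q_n$. For the case $b_{n+2}=a_{n+2}$ your argument genuinely diverges from the paper's, and is simpler. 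The paper first observes that regimes 1, 3 and 4 of its computation never actually invoke $b_{n+2}<a_{n+2}$, and then disposes of the remaining regime 2 by a separate Rauzy-graph argument: it shows that the paths of $T^{\rho}(c_\alpha)$ and $T^{\rho_n}(c_\alpha)$ diverge exactly at the right special factor $R_m$, forcing $T^{\rho}(c_\alpha)$ onto the referent cycle and yielding $r(x,m)=q_n$ directly. You instead compare $x$ with $T^{\rho_{n+2}}(c_\alpha)$ uniformly across all regimes, using only the crude bound $r(T^{\rho_{n+2}}(c_\alpha),m)\leq m+1\leq q_{n+1}-1$ together with the lower bound $\lambda_{n+2}\geq q_{n+2}+q_{n+1}-2$, the latter resting on the Ostrowski-forced inequality $b_{n+3}\neq a_{n+3}$ (since $b_{n+2}=a_{n+2}\geq 1$). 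This buys a shorter, regime-free treatment of the delicate case and avoids the graph-theoretic divergence argument entirely; what it gives up is the finer information the paper's argument extracts in passing (namely that the value in that regime is exactly $q_n$), which is harmless here since the proposition only asserts an equality of repetition values. Your closing remarks correctly identify why the crude bound cannot be recycled in the first case: when $\rho_{n+1}$ is large, $r(T^{\rho_{n+1}}(c_\alpha),m)$ and $\lambda_{n+1}$ are both large and only their exact values stay in step.
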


\begin{proof}
For the first part, we show that the quantity $r(T^{\rho_{n+1}}(c_\alpha),m)+m$ is smaller than the length of the longest common prefix of the two words $T^{\rho_{n+1}}(c_\alpha)$ and $T^{\rho}(c_\alpha)$. The quantity $r(x,m)+m$ is characterized as the length of the smallest prefix $w$ of $x$ having a factor of length $m$ appearing twice in $w$. Let $N$ be the smallest $N\geq n+1$ such that $b_{N+1}\neq 0$, so that the longest common prefix of the words $T^{\rho_{n+1}}(c_\alpha)$ and $T^{\rho}(c_\alpha)$ has length $\lambda_N=q_{N+1}+q_N-2-\rho_{N+1}$. We proceed case by case :

1) If $0\leq\rho_{n+1}\leq (a_{n+1}-l-1)q_n+r$, then :
\begin{align*}
r(T^{\rho_{n+1}}(c_\alpha),m)+m & = q_n+(l+1)q_n+q_{n-1}-2-r \\
 & \leq q_n +q_{n-1} + a_{n+1}q_n-\rho_{n+1}-2 \\
 & = q_{n+1}+q_n-\rho_{n+1}-2 \\
 & =\lambda_n \leq \lambda_{N}
\end{align*}

2) If $(a_{n+1}-l-1)q_n+r<\rho_{n+1}< (a_{n+1}-l)q_n$, then we must have $b_{n+1}=a_{n+1}-l-1$, and so :
\begin{align*}
r(T^{\rho_{n+1}}(c_\alpha),m)+m & = q_{n+1}-\rho_{n+1}+(l+1)q_n+q_{n-1}-2-r \\
 & = q_{n+1}-\rho_{n+1}+(a_{n+1}-b_{n+1})q_n+q_{n-1}-2-r
\end{align*}

We proceed by equivalences :
\begin{align*}
&  & r(T^{\rho_{n+1}}(c_\alpha),m)+m+r & \leq \lambda_{n+1} \\
& \Longleftrightarrow & q_{n+1}-\rho_{n+1}+(a_{n+1}-b_{n+1})q_n+q_{n-1} & \leq (a_{n+2}-b_{n+2})q_{n+1}+q_n+q_{n+1}-\rho_{n+1} \\
& \Longleftrightarrow & (a_{n+1}-b_{n+1})q_n+q_{n-1} & \leq (a_{n+2}-b_{n+2})q_{n+1}+q_n,
\end{align*}
which is true if $b_{n+2}< a_{n+2}$. 

3) If $(a_{n+1}-l)q_n\leq\rho_{n+1}\leq (a_{n+1}-l)q_n+r$, then we must have $b_{n+1}=a_{n+1}-l$ since $r<q_n$. Added with $0\leq l \leq a_{n+1}-1$, we have $b_{n+1}\neq 0$ and so we are in the case where $b_{n+2}\neq a_{n+2}$. Then :
\begin{align*}
r(T^{\rho_{n+1}}(c_\alpha),m)+m & = lq_n+q_{n-1}+(l+1)q_n+q_{n-1}-2-r \\
 & = 2(a_{n+1}-b_{n+1})q_n+2q_{n-1}+q_n-2-r
\end{align*}
We proceed by equivalences :
\begin{align*}
&  & r(T^{\rho_{n+1}}(c_\alpha),m)+m +r & \leq \lambda_{n+1} \\
& \Longleftrightarrow & 2(a_{n+1}-b_{n+1})q_n+2q_{n-1}+q_n & \leq (a_{n+2}-b_{n+2})q_{n+1}+q_n+q_{n+1}-\rho_{n+1} \\
& \Longleftrightarrow & 2(a_{n+1}-b_{n+1})q_n+2q_{n-1} & \leq (a_{n+2}-b_{n+2})q_{n+1}+(a_{n+1}-b_{n+1})q_n+q_{n-1}-\rho_n \\
& \Longleftrightarrow & (a_{n+1}-b_{n+1})q_n+q_{n-1}+\rho_n & \leq (a_{n+2}-b_{n+2})q_{n+1}.
\end{align*}
Since $b_{n+1}\neq 0$ and $b_{n+2}\neq a_{n+2}$, $(a_{n+1}-b_{n+1})q_n\leq (a_{n+1}-1)q_n$ and $q_{n+1}\leq (a_{n+2}-b_{n+2})q_{n+1}$, so that :
\begin{align*}
(a_{n+1}-b_{n+1})q_n+q_{n-1}+\rho_n & \leq (a_{n+1}-1)q_n+q_{n-1}+\rho_n
  = q_{n+1}+\rho_n-q_n \\
 & \leq q_{n+1} \\
 & \leq (a_{n+2}-b_{n+2})q_{n+1}
\end{align*}
showing the result.

4) If $(a_{n+1}-l)q_n+r< \rho_{n+1}\leq q_{n+1}-1$, then $0<(a_{n+1}-l)\leq b_{n+1}$ so that $b_{n+2}\neq a_{n+2}$ and we have :
\begin{align*}
&  & r(T^{\rho_{n+1}}(c_\alpha),m)+m+r & \leq \lambda_{n+1} \\
& \Longleftrightarrow & q_{n+1}+q_n-\rho_{n+1}+(l+1)q_n+q_{n-1} & \leq (a_{n+2}-b_{n+2})q_{n+1}+q_n+q_{n+1}-\rho_{n+1} \\
& \Longleftrightarrow & (l+1)q_n+q_{n-1} & \leq (a_{n+2}-b_{n+2})q_{n+1},
\end{align*}
but this is true since $a_{n+2}-b_{n+2}\geq 1$ and $l+1\leq a_{n+1}$.

Notice that the case 2) is the only one where we use our hypothesis $b_{n+2}<a_{n+2}$. Since the length of the longest common prefix of $T^{\rho_{n+1}}(c_\alpha)$ and $T^{\rho}(c_\alpha)$ is smaller than the length of the longest common prefix of $T^{\rho_{n+2}}(c_\alpha)$ and $T^{\rho}(c_\alpha)$, from the fact that the sequence $(\lambda_n)_{n\geq 1}$ increases, the equality $r(T^{\rho}(c_\alpha),m)=r(T^{\rho_{n+1}}(c_\alpha),m)$ and the fact that the quantity $r(T^{\rho_{n+1}}(c_\alpha),m)+m$ is smaller than the length of the longest common prefix of the words $T^{\rho_{n+1}}(c_\alpha)$ and $T^{\rho}(c_\alpha)$ implies the equality $r(T^{\rho}(c_\alpha),m)=r(T^{\rho_{n+2}}(c_\alpha),m)$. This shows the second part of the assertion, excepted the case 2).

Our hypothesis $b_{n+2}=a_{n+2}$ implies $b_{n+1}=0$ and so $\rho_{n+1}=\rho_n$. The inequality of case 2), given by $(a_{n+1}-l-1)q_n+r<\rho_{n+1}< (a_{n+1}-l)q_n$ implies $l=a_{n+1}-1$. The length of the longest common prefix of $T^{\rho_{n+1}}(c_\alpha)=T^{\rho_{n}}(c_\alpha)$ and $T^{\rho}(c_\alpha)$ equals $\lambda_{n+1}=q_{n+2}+q_{n+1}-\rho_{n+2}-2=q_{n+1}+q_n-\rho_n-2=\lambda_n$. Since $m=q_{n+1}-2-r$, we have $\lambda_n=q_n-\rho_n+m+r$, and this implies that the paths taken by the words $T^{\rho}(c_\alpha)$ and $T^{\rho_n}(c_\alpha)$ diverge at the point $R_m$ corresponding to the right special factor. Since, according to theorem \ref{rauzystructure}, the word $T^{\rho_n}(c_\alpha)$ must take the arrow toward the non-referent cycle at this moment, this means that the word $T^{\rho}(c_\alpha)$ must turn around the referent cycle. This argument, applied to the integral $\alpha$-number defined by the number $\rho_{n+2}$, shows in a similar manner that the word $T^{\rho_{n+2}}(c_\alpha)$ turns around the referent cycle. This shows that 
\begin{center}
	$r(T^{\rho}(c_\alpha),m)=r(T^{\rho_{n+2}}(c_\alpha),m)=q_n$
\end{center}
in this case. Hence the second part of the proposition.
\end{proof}

\begin{theo}\label{repetitionvaleur}
Let $x=T^{\rho}(c_\alpha)$ be a sturmian word of slope $\alpha=[0,a_1, a_2,a_3,\ldots]$ with continuants $(q_n)_{n\geq -1}$, where $\rho=\sum_{i\geq 0}b_{i+1}q_i$ is an $\alpha$-number of the slope $\alpha$, and let $m\in I_n=[q_n-1,q_{n+1}-1[$. Then :
\begin{enumerate}
	\item if $b_{n+1}=0$ and $b_{n+2}=a_{n+2}$, then 
	\begin{itemize}
		\item $r(T^{\rho}(c_\alpha),m)=q_{n}$ for $q_n-1\leq m \leq q_{n+1}-2$,
	\end{itemize}

	\item if $b_{n+1}=0$, $b_{n+2}\neq a_{n+2}$ and $b_n=0$, then 
	\begin{itemize}
		\item $r(T^{\rho}(c_\alpha),m)=q_{n}$ for $q_n-1\leq m \leq q_{n+1}-\rho_{n-1}-2$,
		\item $r(T^{\rho}(c_\alpha),m)=q_{n+1}-\rho_{n-1}$ for $q_{n+1}-\rho_{n-1}-1\leq m \leq q_{n+1}-2$,
	\end{itemize}
	
	\item if $b_{n+1}=0$, $b_{n+2}\neq a_{n+2}$ and $a_{n+1}\neq 1$, then 
\begin{itemize}
		\item $r(T^{\rho}(c_\alpha),m)=q_{n}$ for $q_n-1\leq m \leq q_{n+1}-\rho_{n}-2$,
		\item $r(T^{\rho}(c_\alpha),m)=q_{n+1}-\rho_{n}$ for $q_{n+1}-\rho_{n}-1\leq m \leq q_{n+1}-2$,
	\end{itemize}
	
		\item if $b_{n+1}=0$, $b_{n+2}\neq a_{n+2}$, $a_{n+1}= 1$ and $b_n\neq 0$, then 
\begin{itemize}
		\item $r(T^{\rho}(c_\alpha),m)=q_{n+1}-\rho_{n}=q_{n}+q_{n-1}-\rho_{n}$ for $q_n-1\leq m \leq q_{n+1}-2$,
		\end{itemize}
		
		\item If $0<b_{n+1}<a_{n+1}-1$, then 
\begin{itemize}
			\item $r(T^{\rho}(c_\alpha),m)=q_{n}$ for $q_n-1\leq m \leq q_{n+1}-\rho_{n+1}-2$,
			\item $r(T^{\rho}(c_\alpha),m)=q_{n+1}-\rho_{n+1}$ for $q_{n+1}-\rho_{n+1}-1\leq m \leq q_{n+1}-b_{n+1}q_n-2$,
			\item $r(T^{\rho}(c_\alpha),m)=q_{n+1}-b_{n+1}q_n$ for $q_{n+1}-b_{n+1}q_n-1\leq m \leq q_{n+1}+q_{n}-\rho_{n+1}-2$,
			\item $r(T^{\rho}(c_\alpha),m)=q_{n+1}+q_{n}-\rho_{n+1}$ for $q_{n+1}+q_{n}-\rho_{n+1}-1\leq m \leq q_{n+1}-2$,
		\end{itemize}
		
		\item if $b_{n+1}=a_{n+1}-1$, $a_{n+1}\neq 1$ and $b_n=0$, then 
\begin{itemize}
			\item $r(T^{\rho}(c_\alpha),m)=q_{n}$ for $q_n-1\leq m \leq q_{n}+q_{n-1}-\rho_{n-1}-2$,
			\item $r(T^{\rho}(c_\alpha),m)=q_{n}+q_{n-1}-\rho_{n-1}$ for $q_{n}+q_{n-1}-\rho_{n-1}-1\leq m \leq q_n+q_{n-1}-2$,
			\item $r(T^{\rho}(c_\alpha),m)=q_n+q_{n-1}$ for $q_n+q_{n-1}-1\leq m \leq 2q_{n}+q_{n-1}-\rho_{n-1}-2$,
			\item $r(T^{\rho}(c_\alpha),m)=2q_{n}+q_{n-1}-\rho_{n-1}$ for $2q_{n}+q_{n-1}-\rho_{n-1}-1\leq m \leq q_{n+1}-2$,
		\end{itemize}

		\item if $b_{n+1}=a_{n+1}-1$, $a_{n+1}\neq 1$ and $b_n\neq 0$, then 
\begin{itemize}
			\item $r(T^{\rho}(c_\alpha),m)=q_{n}+q_{n-1}-\rho_{n-1}$ for $q_{n}-1\leq m \leq q_n+q_{n-1}-2$,
			\item $r(T^{\rho}(c_\alpha),m)=q_n+q_{n-1}$ for $q_n+q_{n-1}-1\leq m \leq 2q_{n}+q_{n-1}-\rho_{n-1}-2$,
			\item $r(T^{\rho}(c_\alpha),m)=2q_{n}+q_{n-1}-\rho_{n-1}$ for $2q_{n}+q_{n-1}-\rho_{n-1}-1\leq m \leq q_{n+1}-2$,
		\end{itemize}
		
			\item if $b_{n+1}=a_{n+1}$, then 
\begin{itemize}
			\item $r(T^{\rho}(c_\alpha),m)=q_{n-1}$ for $q_{n}-1\leq m \leq q_n+q_{n-1}-\rho_{n-1}-2$,
			\item $r(T^{\rho}(c_\alpha),m)=q_n+q_{n-1}-\rho_{n-1}$ for $q_n+q_{n-1}-\rho_{n-1}-1\leq m \leq q_{n+1}-2$.
		\end{itemize}
	\end{enumerate}
		
\end{theo}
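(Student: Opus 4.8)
The plan is to obtain the theorem as a purely organizational consequence of the two results that immediately precede it: the proposition just above, which reduces $r(x,m)$ to $r(T^{\rho_{n+1}}(c_\alpha),m)$ when $b_{n+2}\neq a_{n+2}$ and to $r(T^{\rho_{n+2}}(c_\alpha),m)$ when $b_{n+2}=a_{n+2}$, and the corollary of Proposition \ref{valeurreppremier}, which already lists the four conditional values of $r(T^{\rho_{n+1}}(c_\alpha),m)$ on $I_n$ together with the degeneracy conditions saying when each of the four segments is empty. Once these are in hand, every item of the theorem is just a matter of lining up its hypotheses with these inputs and rewriting endpoints in normalized form.

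First I would isolate case $1$, the unique case with $b_{n+2}=a_{n+2}$. Here the Ostrowski condition forces $b_{n+1}=0$, so $\rho_{n+1}=\rho_n$, and the preceding proposition gives $r(x,m)=r(T^{\rho_{n+2}}(c_\alpha),m)$. I would then reuse the turning argument from the final paragraph of that proposition's proof: for $m\in I_n^l$ with $l<a_{n+1}-1$, or with $l=a_{n+1}-1$ in the range where Proposition \ref{valeurreppremier}.1) applies, the word $T^{\rho_{n+1}}(c_\alpha)$ sits on and turns around the referent cycle, giving $q_n$ directly; in the remaining boundary sub-case $l=a_{n+1}-1$ the common-prefix identity $\lambda_{n+1}=\lambda_n$ forces, via Proposition \ref{rauzystructure}, both $T^{\rho}(c_\alpha)$ and $T^{\rho_{n+2}}(c_\alpha)$ to take the arrow into the referent cycle at $R_m$. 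In every sub-case this yields the single value $r(x,m)=q_n$ asserted on all of $I_n$.

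For every remaining case one has $b_{n+2}\neq a_{n+2}$, either by hypothesis (cases $2,3,4$) or automatically, since $b_{n+1}\neq 0$ already forbids $b_{n+2}=a_{n+2}$ (cases $5,6,7,8$). Hence $r(x,m)=r(T^{\rho_{n+1}}(c_\alpha),m)$ and I may quote the corollary verbatim. It then remains to rewrite its endpoints and values using the relations $b_{n+1}=0\Rightarrow\rho_{n+1}=\rho_n$, $b_n=0\Rightarrow\rho_n=\rho_{n-1}$ and $b_{n+1}=a_{n+1}\Rightarrow b_n=0$, together with $q_{n+1}=a_{n+1}q_n+q_{n-1}$ (specialized to $q_{n+1}=q_n+q_{n-1}$ when $a_{n+1}=1$), and to decide, from the corollary's own degeneracy remarks, which segments survive: the $r=q_n$ segment disappears exactly when $b_{n+1}=a_{n+1}$ or when $b_{n+1}=a_{n+1}-1$ and $b_n\neq0$; the $q_{n+1}-\rho_{n+1}$ segment disappears when $b_{n+1}=a_{n+1}$; and the two ``wrap'' segments disappear when $b_{n+1}=0$. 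This is precisely what splits the $b_{n+1}=0$ regime into cases $2,3,4$ and the $b_{n+1}=a_{n+1}-1$ regime into cases $6,7$, and what collapses case $4$ to a single segment when $a_{n+1}=1$.

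The hard part will be exactly this last bookkeeping: carrying four interval boundaries at once and certifying in each case which segments collapse and how the substitutions above contract the continuant expressions into the stated form. The most delicate points are the sub-cases with $b_n\neq0$, where $\rho_n\geq q_{n-1}$ forces the leading $r=q_n$ range $q_n-1\le m\le q_{n+1}-\rho_{n+1}-2$ to be empty so that the next segment already starts at $m=q_n-1$ (this is what distinguishes case $7$ from case $6$ and underlies case $4$), and the degenerate situation $a_{n+1}=1$, where the recurrence merges two boundaries. No new structural input is needed beyond Proposition \ref{valeurreppremier}, its corollary, the reduction proposition, and Theorem \ref{caractournerauzy} with Lemma \ref{tournenonreferent} already used to establish them; the entire remaining effort is interval arithmetic and the verification of the empty-segment conditions.
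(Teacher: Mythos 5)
The paper gives no proof of Theorem \ref{repetitionvaleur}: it is stated as an immediate consequence of the preceding reduction proposition and of the corollary to Proposition \ref{valeurreppremier}, which is exactly the derivation you propose, so your approach coincides with the paper's (implicit) one and the plan is sound, including your handling of the exceptional case $b_{n+2}=a_{n+2}$ via the final paragraph of the reduction proposition's proof. One point your deferred bookkeeping must confront: in case 7 ($b_{n+1}=a_{n+1}-1$, $b_n\neq 0$) substituting $\rho_{n+1}=(a_{n+1}-1)q_n+\rho_n$ into the corollary yields the values $q_{n+1}-\rho_{n+1}=q_n+q_{n-1}-\rho_n$ and $q_{n+1}+q_n-\rho_{n+1}=2q_n+q_{n-1}-\rho_n$, whereas the theorem prints $\rho_{n-1}$ in their place; since $b_n\neq 0$ forces $\rho_n\neq\rho_{n-1}$, the two do not coincide (they do in cases 2, 6 and 8, where $b_n=0$), so your write-up should either flag this as a typo in the statement or explain why $\rho_{n-1}$ is intended.
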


Recall that for a real number $x$, its irrationality exponent $\mu(x)$ is defined as the supremum on real numbers $\mu$ such that the inequality :
\begin{center}
	$\displaystyle \left|x- \frac{p}{q} \right|<\frac{1}{q^\mu}$
\end{center}
has infinitely many solutions in the irreducible fraction $p/q$. The irrationality exponent of a real number is very hard to compute, see \cite{adamczewski2, becher, bugeaud1, bugeaud3, fischler, rhin, roth}, these references include the acclaimed result of K. Roth.


We present now two exponents defined on infinite words. A link between real numbers and infinite words is given by expansion of a real number in an integer base. For $u$ a finite word, and $w\geq 0$ a real number, we define the fractional power $u^{w}$ as the finite word $u^w=u^{\left\lfloor w \right\rfloor}v$ where $v$ is the prefix of $u$ with length $\displaystyle \left\lfloor (w-\left\lfloor w \right\rfloor )|u| \right\rfloor$, so that $u^{k/|u|}$ is the prefix of length $k$ of $u$, for $0\leq k \leq  |u|$. The critical exponent and initial critical exponent have been introduced by L. Zamboni, C. Holton and V. Berthé in \cite{holton}, where they give a general formula for their values on sturmian words.

\begin{defi}
For $x$ an infinite word over an alphabet $\mathcal{A}$, we denote by $ce(x)$ (resp. $ice(x)$) the critical exponent (resp. initial critical exponent) of $x$ as the supremum of real numbers $w\geq 0$ such that there exists infinitely many words $u$ such that $u^w$ is a factor (resp. prefix) of $x$.
\end{defi}

The diophantine exponent, defined below, has been introduced by B. Adamczewski and Y. Bugeaud in \cite{adamczewskibug}. In the other contribution \cite{adamczewski}, they show that for an infinite word $x=x_1x_2x_3\cdots$ over a digital alphabet $\mathcal{A}=\{0,1,\ldots , b-1\}$ for some $b\geq 2$, such that $dio(x)$ is finite and such that $x$ has sublinear complexity, meaning that there exists a constant $C>1$ such that $p(x,n)\leq Cn$ for all $n\geq 1$, then the irrationality exponent of the real number
\begin{center}
	$y= \displaystyle \sum_{k \geq 1}\frac{x_k}{b^k}$
\end{center}
is subjected to the inequalities
\begin{center}
	$dio(x) \leq \mu(y) \leq (2C+1)^3(dio(x)+1)$
\end{center}
and so is also finite.

\begin{defi}
For $x$ an infinite word over an alphabet $\mathcal{A}$, we denote $dio(x)$ the diophantine exponent of $x$, defined as the supremum of real numbers $\rho \geq 0$ such that there exist two sequences $(u_n)_{n\geq 1}$ and $(v_n)_{n\geq 1}$ of finite words and a sequence of real numbers $(w_n)_{n\geq 1}$ satisfying the three following conditions :
\begin{enumerate}
	\item $u_nv_n$ is a prefix of $x$ for all $n\geq 1$,
	\item the sequence of length $(|v_n^{w_n}|)_{n\geq 1}$ is strictly increasing,
	\item $\displaystyle \frac{|u_nv_n^{w_n}|}{|u_nv_n|}\geq \rho$.
\end{enumerate}
\end{defi}

In \cite{adamczewski} is showed that for real numbers of the form :
\begin{center}
	$y=\displaystyle \sum_{k\geq 0}\frac{1}{b^{\left\lfloor k\gamma \right\rfloor}}=0,x_1x_2x_3\cdots$
\end{center}
where $x=x_1x_2x_3\cdots$ encodes the base $b$ expansion of $y$, we have
\begin{center}
	$\mu(y)=dio(x)= 1+\limsup_{n}[a_n,a_{n-1},\ldots, a_1]=dio(c_{1/\gamma})$
\end{center}
where $\gamma >1$ is an irrational number with continued fraction having partial quotients $(a_k)_{k\geq 1}$. B. Adamczewski and Y. Bugeaud show that for a given sturmian sequence, its diophantine exponent is finite if and only if the sequence of the partial quotients of its slope is bounded. These results are based upon the computation :
\begin{center}
	$\displaystyle dio(c_\alpha)=1+\limsup \frac{q_{n+1}}{q_n}$
\end{center}
for a slope $\alpha$ with continuants $(q_n)_{n \geq -1}$.

Also, from the work of Y. Bugeaud and D. Kim \cite{bugeaudkim1}, the diophantine exponent of an infinite word is linked to the repetition function by the formula
\begin{center}
	$\displaystyle (dio(x)-1)\liminf \frac{r(x,n)}{n}= 1$
\end{center}
valid for any infinite word $x$ over a finite alphabet $\mathcal{A}$. In this reference is also showed the equality
\begin{center}
	$\displaystyle dio(x)= \mu\left(\sum_{k\geq 0}\frac{x_k}{b^k}\right)$
\end{center}
for a sturmian word $x=x_0x_1x_2\cdots$.

\begin{theo}\label{diosturm}
Let $x=x_0x_1x_2\cdots=T^{\rho}(c_\alpha)$ be a sturmian word of slope $\alpha$. We write $\rho=\sum_{i\geq 0}b_{i+1}q_i$ where $(b_i)_{i\geq 1}$ satisfy the Ostrowski conditions and such that $0<b_{i}< a_i-1$ for all $i\geq 1$ large enough. Then we have the general formula :
\begin{align*}
	\displaystyle dio(x)&= \mu\left(\sum_{k\geq 0}\frac{x_k}{b^k}\right) \\ &=1+ \limsup\left(\frac{q_{n+1}-\rho_{n+1}}{q_n}, \frac{q_{n+1}-b_{n+1}q_n}{q_{n+1}-\rho_{n+1}}, \frac{q_{n+1}-\rho_{n+1}+q_n}{q_{n+1}-b_{n+1}q_n},\frac{q_{n+1}}{q_{n+1}-\rho_{n+1}+q_n} \right).
\end{align*}
\end{theo}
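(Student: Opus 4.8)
The plan is to reduce the whole computation to the values of the repetition function recorded in Theorem \ref{repetitionvaleur} and to plug them into the Bugeaud--Kim machinery. The first equality $dio(x)=\mu\left(\sum_{k\geq 0}x_k/b^k\right)$ is exactly the identity of Bugeaud and Kim quoted immediately before the statement, valid for every sturmian word, so only the second equality needs an argument. For that I would start from their relation $(dio(x)-1)\liminf_m r(x,m)/m=1$, which for a positive sequence rearranges, using $1/\liminf a_m=\limsup 1/a_m$, to
\begin{center}
$dio(x)=1+\dfrac{1}{\liminf_m r(x,m)/m}=1+\limsup_m \dfrac{m}{r(x,m)}$,
\end{center}
so that everything comes down to evaluating $\limsup_m m/r(x,m)$.

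The next step exploits that $r(x,\cdot)$ is a step function. On any maximal block of integers on which $r(x,\cdot)$ is constant, the ratio $m/r(x,m)$ is strictly increasing in $m$, hence its supremum over the block is attained at the right endpoint. Consequently $\limsup_m m/r(x,m)$ is unchanged if $m$ is restricted to the right endpoints of the constancy intervals of $r(x,\cdot)$, and the problem becomes one of reading off, for each such endpoint, the pair (endpoint, value of $r$) from Theorem \ref{repetitionvaleur}.

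Here the hypothesis $0<b_i<a_i-1$ for all large $i$ is what collapses the bookkeeping to the four announced terms: for $n$ large we have $0<b_{n+1}<a_{n+1}-1$, i.e. we are in case $5$ of Theorem \ref{repetitionvaleur}, which splits $I_n$ into exactly four constancy intervals with (right endpoint, value of $r$) equal to
\begin{center}
$(q_{n+1}-\rho_{n+1}-2,\ q_n),\qquad (q_{n+1}-b_{n+1}q_n-2,\ q_{n+1}-\rho_{n+1}),$
\end{center}
\begin{center}
$(q_{n+1}+q_n-\rho_{n+1}-2,\ q_{n+1}-b_{n+1}q_n),\qquad (q_{n+1}-2,\ q_{n+1}+q_n-\rho_{n+1}).$
\end{center}
Forming $m/r$ at these endpoints yields four sequences differing from the four fractions in the statement only by additive corrections of the form $O(1)/(\text{denominator})$ in the numerators. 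The same hypothesis forces every denominator to tend to infinity: since $b_{n+1}\geq 1$ we get $\rho_{n+1}\geq q_n$, while $b_{n+1}\leq a_{n+1}-2$ gives $\rho_{n+1}<(a_{n+1}-1)q_n$, whence $q_{n+1}-\rho_{n+1}>q_n+q_{n-1}$, $q_{n+1}-b_{n+1}q_n\geq 2q_n+q_{n-1}$, and $q_{n+1}+q_n-\rho_{n+1}>q_n$, all diverging. Thus the additive corrections vanish in the limit and each of my four sequences shares the $\limsup$ of the corresponding fraction.

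Finally I would assemble the pieces using the elementary fact that for finitely many sequences the limsup of their pointwise maximum equals the maximum of their individual limsups. Since $\limsup_m m/r(x,m)$ is the limsup of the maximum over the four endpoints produced in each $I_n$, it equals the $\limsup$ of the four-term family displayed in the theorem; adding $1$ gives the formula. The main obstacle is not conceptual but the careful verification that, under the stated hypothesis, case $5$ is genuinely the only case occurring for large $n$ and that its four endpoints reproduce exactly these four fractions: one must confirm that the degenerate cases $1$--$4$ and $6$--$9$ of Theorem \ref{repetitionvaleur} (which would inject spurious ratios such as those involving $q_{n-1}$) are excluded for large $n$, and that no further constancy interval contributes a fifth competing ratio.
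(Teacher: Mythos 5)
Your proposal is correct and follows essentially the same route as the paper: both reduce via the Bugeaud--Kim identity to $1+\limsup m/r(x,m)$, restrict the limsup to the right endpoints of the constancy intervals of the repetition function (the paper phrases this through Proposition \ref{repetitioncomportement} as $\limsup v_{k+1}/v_k$ over consecutive values $v_k$ of $r(x,\cdot)$, which is the same reduction), and then read off the four ratios from case 5 of Theorem \ref{repetitionvaleur} under the hypothesis $0<b_i<a_i-1$. Your explicit verification that the denominators diverge so the additive $O(1)$ corrections are harmless is a detail the paper leaves implicit, but the argument is the same.
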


\begin{proof}
According to \cite{bugeaudkim1}, we have
\begin{center}
	$\displaystyle dio(x)= \mu\left(\sum_{k\geq 0}\frac{x_k}{b^k}\right)=1+\frac{1}{\liminf \frac{r(x,n)}{n}} =1+\limsup \frac{n}{r(x,n)}\cdot$
\end{center}

Let $r(x,\mathbb{N}^*)=\{v_1<v_2<v_3<\cdots\}$ be the set of values of the repetition function associated to $x$. The result \ref{repetitioncomportement}, namely the equivalence $r(x,m-1)\neq r(x,m) \Leftrightarrow r(x,m)=m+1$, added with the increasing of the repetition function, allow us to reduce the limsup above to the limsup of the subsequence formed by the values of the repetition function. We get :
\begin{center}
	$\displaystyle \limsup \frac{n}{r(x,n)}=\limsup \frac{v_{k+1}-1}{v_k}= \limsup \frac{v_{k+1}}{v_k}\cdot$
\end{center}

Under the assumption $0<b_{i}< a_i-1$ for all $i\geq 1$ large enough, added with the values of the repetition function we obtained, we get
\begin{center}
	$\displaystyle \limsup \frac{n}{r(x,n)}= \limsup\left(\frac{q_{n+1}-\rho_{n+1}}{q_n}, \frac{q_{n+1}-b_{n+1}q_n}{q_{n+1}-\rho_{n+1}}, \frac{q_{n+1}-\rho_{n+1}+q_n}{q_{n+1}-b_{n+1}q_n},\frac{q_{n+1}}{q_{n+1}-\rho_{n+1}+q_n} \right)$,
\end{center}
hence the theorem. \end{proof}

\begin{coro}
Let $x= T^{\rho}(c_\alpha)$ be a sturmian word of slope $\alpha$, where $\alpha$ has continuants $(q_n)_{n\geq -1}$, and $\rho=\sum_{i\geq 0}b_{i+1}q_i$ is an $\alpha$-number of the slope $\alpha$. Then :
\begin{itemize}
	\item if $b_i<a_i$ for all $i\geq 1$ large enough, then
	\begin{center}
		$dio(x)\leq dio(c_\alpha)$,
	\end{center}
	\item if $b_i<a_i$ for all $i\geq 1$ large enough, and if the set of indexes $i\geq 1$ such that $b_i=0$ has integer intervals of arbitrary large length, then
	\begin{center}
		$dio(x)=dio(c_\alpha)$.
	\end{center}
\end{itemize}
\end{coro}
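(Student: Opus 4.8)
I would start from the two inputs already available: the identity $dio(x)=1+\limsup_k \frac{v_{k+1}}{v_k}$ obtained in the proof of Theorem \ref{diosturm}, where $v_1<v_2<\cdots$ is the increasing sequence of values taken by $r(x,\cdot)$, and the formula $dio(c_\alpha)=1+\limsup_n \frac{q_{n+1}}{q_n}$ recalled above; write $L=\limsup_n \frac{q_{n+1}}{q_n}$. The mechanism driving both inequalities is that, within a single level $m\in I_n$, the successive repetition values listed in Theorem \ref{repetitionvaleur}, followed by the transition value $q_{n+1}$ reached at $m=q_{n+1}-1$, have consecutive ratios whose product telescopes to $\frac{q_{n+1}}{w}$, where $w$ is the smallest repetition value on that level. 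This is the same telescoping visible in the four ratios of Theorem \ref{diosturm}, whose product is exactly $\frac{q_{n+1}}{q_n}$.

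\textbf{Upper bound.} First I would note that $b_i<a_i$ for large $i$ deletes cases (1) and (8) of Theorem \ref{repetitionvaleur} for all large $n$, since each of them forces $b_{n+1}=a_{n+1}$ or $b_{n+2}=a_{n+2}$. In the surviving cases the smallest value on level $n$ is $q_n$ (exactly as for $c_\alpha$, where one has $r(c_\alpha,m)=q_n$ on $I_n$), so the telescoped product is $\frac{q_{n+1}}{q_n}$ and, every factor being $\geq 1$, each consecutive ratio is $\leq \frac{q_{n+1}}{q_n}$. Passing to the limsup gives $\limsup_k \frac{v_{k+1}}{v_k}\leq L$, hence $dio(x)\leq dio(c_\alpha)$.

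\textbf{Lower bound.} For the equality I would use the long runs of zeros. If $b_i=0$ for $i$ in an interval $[s,t]$, then $\rho_n=\rho_{s-1}=:\rho^{*}$ is constant and bounded by $q_{s-1}$ for $n\in[s-1,t]$, so by cases (2)--(3) of Theorem \ref{repetitionvaleur} the repetition function jumps from $q_n$ to $q_{n+1}-\rho^{*}$, producing the ratio $\frac{q_{n+1}-\rho^{*}}{q_n}=\frac{q_{n+1}}{q_n}\bigl(1-\frac{\rho^{*}}{q_{n+1}}\bigr)$. Taking the runs arbitrarily long and cofinal, for $n$ deep inside them $\frac{\rho^{*}}{q_{n+1}}\to 0$, so these ratios recover $\frac{q_{n+1}}{q_n}$; combined with the upper bound this is meant to yield $dio(x)=dio(c_\alpha)$.

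\textbf{Main obstacle.} I expect two delicate points. For the upper bound, the clean claim ``smallest value on level $n$ equals $q_n$'' fails at levels with $a_{n+1}=1$ (case (4)) or $b_n\neq 0$ (case (7)), where a value is carried over from the previous level and the telescoping no longer starts at $q_n$; here one must exploit that $b_i<a_i$ keeps $\rho_n$ bounded away from $q_n$ (one gets $\rho_n\leq q_n-q_{n-2}-1$ up to a fixed correction), which is precisely what cancels the apparent off-by-one and keeps every ratio $\leq \frac{q_{n+1}}{q_n}$. The genuinely hard part, however, is the lower bound: one must guarantee that the indices lying deep inside long runs of zeros realise the \emph{full} value $L$, that is $\limsup_{n\text{ deep in a run}}\frac{q_{n+1}}{q_n}=\limsup_n \frac{q_{n+1}}{q_n}$. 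This forces the two hypotheses to be used jointly, and establishing it --- rather than the routine telescoping --- is the crux of the proof.
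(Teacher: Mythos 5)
Your route is the paper's route: the upper bound comes from bounding $m/r(x,m)$ level by level using the values of Theorem \ref{repetitionvaleur}, and the lower bound comes from long runs of zeros, inside which $\rho_n$ is frozen at a value $\rho^{*}$ that becomes negligible compared with $q_n$ (the paper quantifies this by $\rho_i/q_k\leq q_i/q_k\leq 1/F_{k-i}$ via a Fibonacci-type minoration of the continuants, essentially your ``$\rho^{*}/q_{n+1}\to 0$ deep inside a run''). On the upper bound you are in fact more careful than the source: the paper simply asserts $r(x,m)\geq q_i$ on $I_i$, which fails in your case (4) (there $b_n\neq 0$ forces $\rho_n\geq q_{n-1}$, so $r=q_n+q_{n-1}-\rho_n\leq q_n$, and can be well below $q_n$); your observation that one must instead use $b_i<a_i$ to bound $\rho_n$ away from $q_n$ and compare the resulting ratio with $q_n/q_{n-1}$ rather than $q_{n+1}/q_n$ is exactly the repair needed, and it still yields $\limsup m/r(x,m)\leq\limsup q_{j+1}/q_j$.

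The gap is the one you name yourself and then do not close: the claim that the indices lying deep inside the runs of zeros realise the full value $L=\limsup_n q_{n+1}/q_n$. This is not a routine verification to be deferred --- it does not follow from the two hypotheses. Nothing in ``$b_i<a_i$ eventually'' and ``arbitrarily long runs of $b_i=0$'' forces the large partial quotients to occur at indices inside (or at the ends of) those runs: one can take $a_n=1$ except at a sparse sequence $n_k$ where $a_{n_k}=10$, and put $b_{n_k}=a_{n_k}-1$, $b_n=0$ elsewhere. Both hypotheses hold, the ratios $q_{n+1}/q_n$ seen from within the zero-runs stay near $1+1/\varphi$, and a direct inspection of Theorem \ref{repetitionvaleur} (cases (2), (4), (6)) gives $\limsup m/r(x,m)$ well below $L$. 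So the restricted limsup genuinely need not recover $L$. You should be aware that the paper's own proof has exactly the same hole: after the bound $q_i/q_k\leq 1/F_{k-i}$ it says ``which will allow us to conclude by taking the limsup under our assumptions'' and stops, i.e.\ it silently identifies the limsup along the ends of zero-runs with the global limsup. Your proposal is therefore a faithful reconstruction of the argument, with the honest admission of the step that neither you nor the source actually establishes; to turn either into a proof one would need an additional hypothesis tying the positions of the large $a_i$ to the zero-runs of $(b_i)$.
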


\begin{proof}
For the first part, given the values of the repetition function we obtained, we get, for all $i\geq 1$ large enough :
\begin{center}
	$r(x,m)\geq q_i$ \quad for $q_i-1\leq m \leq q_{i+1}-2$.
\end{center}
This implies 
\begin{center}
	$\displaystyle \frac{m}{r(x,m)}\leq \frac{q_{i+1}}{q_i}$
\end{center}
and passing to the limsup, we get
\begin{center}
	$\displaystyle dio(x) = 1 + \limsup \frac{m}{r(x,m)} \leq 1+\limsup{\frac{q_{m+1}}{q_m}}  = dio(c_\alpha).$
\end{center}

For the second part, from the first one we have	$dio(x)\leq dio(c_\alpha)$, so that it is now enough to show the opposite inequality. Let $1\leq i \leq j$ be two natural numbers, and we assume that  $b_{k+1}=0$ for all $i\leq k \leq j$. Again from the values of the repetition function, we get
\begin{itemize}
	\item $r(x,m)=q_k$ \quad for  $q_k-1\leq m \leq q_{k+1}-\rho_i-2$,
	\item $r(x,m)=q_{k+1}-\rho_i$ \quad  for $q_{k+1}-\rho_i -1 \leq m \leq q_{k+1}-2$,
\end{itemize}
so that
\begin{center}
	$\displaystyle \frac{m}{r(x,m)}= \frac{q_{k+1}-\rho_i-2}{q_k-1}$.
\end{center}
To get the result, we show that the right-hand side of
\begin{center}
	$\displaystyle \frac{\rho_i}{q_k} < \frac{q_i}{q_k}$
\end{center}
can be made arbitrary small by bounded it from above by a quantity that only depends on $k-i$, which will allow us to conclude by taking the limsup under our assumptions.

To do that, we introduce the sequence $(u_n)_{n\geq 1}$ satisfying the Fibonacci recurrence, namely $u_{n+2}=u_{n+1}+u_n$ but with the starting values $u_0=q_{i-1}$ and $u_1=q_i$, and let $(F_n)_{n\geq 0}$ is the classical Fibonacci sequence with  $F_{n+2}=F_{n+1}+F_n$, $F_0=0$ and $F_1=1$. We can express $(u_n)_{n\geq 1}$ by
\begin{center}
	$u_n= q_iF_n+q_{i-1}F_{n-1}$.
\end{center}
we can then use induction to get the lower bound on $(q_{n})_{n\geq 0}$ by
\begin{center}
	$q_{n}\geq u_{n-i}$
\end{center}
with the initial condition $q_{i-1}\geq u_0$ and $q_i \geq u_1$, added with the trivial inequality $q_{n+2}\geq q_{n+1}+q_n$. This gives, by taking $k=n$,
\begin{center}
	$\displaystyle \frac{q_i}{q_k}  \leq \frac{q_i}{q_iF_{k-i}+q_{i-1}F_{k-i-1}}  \leq \frac{1}{F_{k-i}},$
\end{center}
allowing us to conclude. \end{proof}

\section{Factorisations of sturmian words}

In this third section of the paper, we develop the formalism of $\alpha$-number and formal intercepts. As a result we provide general results on factorisations of sturmian words, the form of these factorisation being naturally linked to the definition of $\alpha$-numbers. In the last part of this section, we study some operations on $\alpha$-numbers, that can be interpreted as division operations in the Ostrowski numeration system.

The problem of factorisations of sturmian words is a central problem since a factorisation is a very convenient way to understand an infinite word with a practical point of view. Factorisations for the characteristic word were known (see \cite{lothaire}), but for general sturmian words no general structure were known. For factorisation of sturmian words, see \cite{brown3, fraenkel2}. In the work of J. Peltom\"aki \cite{peltomaki2}, a "square root" map is defined on sturmian words, which relies on existence of factorisations of sturmian words made with words with specific properties. For the particular case of the Fibonacci word $f$, he obtains the formula :
\begin{center}
	$\displaystyle \sqrt{f}=\prod_{i\geq 0}\widetilde{f_{3i+2}}$
\end{center}
where $(f_n)$ is the standard associated sequence, and for which he checks by elementary means that this infinite word indeed defines a sturmian word. This formula can be put in parallel to our factorisations results and to our constructions in theorem \ref{deuxtorsion}.

\subsection{Equivalence of $\alpha$-numbers}

\begin{defi}\label{interceptequivalent}
Let $\rho=\sum_{i\geq 0}b_{i+1}q_i$ be an $\alpha$-number. We define the $\alpha$-number $\rho+1$ as the formal intercept associated to the sturmian word
\begin{center}
	$T(T^\rho(c_\alpha))$,
\end{center}
and this allows us to define $\rho + k$ as the formal intercept associated to $T^k(T^\rho(c_\alpha))$.
We say that two formal intercepts $\rho$ and $\gamma$ are equivalent if there exist two integers $k,l\geq 0$ such that $\rho+k=\gamma +l$, so that :
\begin{center}
	$\rho \equiv \gamma$ \quad $\Longleftrightarrow$ \quad $\exists k,l\geq 0, \ \rho+k=\gamma +l$,
\end{center}
We say that an $\alpha$-number is a natural integer if it is associated to a suffix of the characteristic word.
\end{defi}

\begin{prop}\label{classenulle}
Let $\rho= \sum_{i\geq 0} b_{i+1}q_i$ be an $\alpha$-number, and for all $n\geq 1$, let $\rho_n=\sum_{i=0}^{n-1}b_{i+1}q_i$. The following statements are equivalent :
\begin{enumerate}[i)]
	\item $\rho$ is equivalent to the zero $\alpha$-number,
	\item one of the two sequences $ (\rho_n)_{n\geq 0}$ or $ (q_n-\rho_n)_{n\geq 0}$ converges,
	\item we have one of the following alternatives :
	1) $\exists N\geq 1$ such that $b_i=0$ for all $i\geq N$,
		2) $ \exists N\geq 1$ such that $b_{2i}= a_{2i}$ and $b_{2i+1}=0$ for all $i\geq N$,
		3) $\exists N\geq 1$ such that $b_{2i}=0$ and $b_{2i+1}= a_{2i+1}$ for all $i\geq N$.
\end{enumerate}
\end{prop}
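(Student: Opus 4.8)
The plan is to first reformulate $(i)$ inside the orbit of $c_\alpha$. By Definition \ref{interceptequivalent} and the uniqueness in Theorem \ref{interceptsturmien}, $\rho$ is equivalent to $0$ exactly when $T^k(T^\rho(c_\alpha))=T^l(c_\alpha)$ for some $k,l\geq 0$, i.e. when $T^\rho(c_\alpha)$ and $c_\alpha$ share a common tail, equivalently when $\rho+k$ is a natural integer for some $k\geq 0$. With this reading I would prove the cycle $(iii)\Rightarrow(i)\Rightarrow(ii)\Rightarrow(iii)$. For $(iii)\Rightarrow(i)$, alternative $1$ is immediate: if $b_i=0$ for $i\geq N$ then $\rho=\rho_N$ is a finite Ostrowski sum, hence a natural integer, so $\rho\equiv 0$. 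For alternatives $2$ and $3$ I would use Proposition \ref{interceptmoinsun}: the intercepts $\sigma_0,\sigma_1$ of $0c_\alpha,1c_\alpha$ are precisely the patterns $b_{2i}=a_{2i},\,b_{2i+1}=0$ and $b_{2i+1}=a_{2i+1},\,b_{2i}=0$, and $\sigma_0+1=\sigma_1+1=0$ since $T(0c_\alpha)=T(1c_\alpha)=c_\alpha$.

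To finish $(iii)\Rightarrow(i)$ under alternative $2$, a telescoping of $\rho_{n+1}-\rho_n=b_{n+1}q_n$ using $q_{2m}-q_{2m-2}=a_{2m}q_{2m-1}$ gives $\rho_{2m}=q_{2m}-c$ for all large $m$, with $c=q_{2N}-\rho_{2N}\geq 1$ constant; thus $\Psi_{2m}(\rho)=q_{2m}-c$ while $\Psi_{2m}(\sigma_0)=q_{2m}-1$. Invoking Definition \ref{deftrho} (sharing of the prefix of length $q_{2m}-1$) I would compare $T^{c-1}(T^\rho(c_\alpha))$ and $0c_\alpha=T^{\sigma_0}(c_\alpha)$: both share with $T^{q_{2m}-1}(c_\alpha)$ a prefix of length at least $q_{2m}-c\to\infty$, forcing $T^{c-1}(T^\rho(c_\alpha))=0c_\alpha$, whence $\rho+c=\sigma_0+1=0$ and $\rho\equiv 0$. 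Alternative $3$ is identical with $\sigma_1$ and odd indices.

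For $(ii)\Leftrightarrow(iii)$ I would work at the level of partial sums. Since $\rho_{n+1}-\rho_n=b_{n+1}q_n\geq 0$, the sequence $(\rho_n)$ is nondecreasing, so it converges iff it is eventually constant iff $b_{n+1}=0$ for $n$ large, which is alternative $1$. The sequence $(q_n-\rho_n)$ is not monotone, and I read its "convergence" as boundedness along a subsequence (finite $\liminf$), which is what the theorem needs: the telescoping above shows alternatives $2$ and $3$ make $q_n-\rho_n$ constant along the even, resp. odd, indices. Conversely, $q_{n_j}-\rho_{n_j}\leq B$ along a subsequence means $\rho_{n_j}$ sits within $B$ of its maximum $q_{n_j}-1$, and the rigidity of near-maximal Ostrowski expansions (whose maximal patterns at $q_{2m}-1$ and $q_{2m+1}-1$ are exactly the $\sigma_0$- and $\sigma_1$-truncations) forces $\rho$ to coincide eventually with $\sigma_0$ or $\sigma_1$, i.e. alternative $2$ or $3$.

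It remains to close the cycle with $(i)\Rightarrow(ii)$, which I expect to be the main obstacle. From the common tail $T^k(x)=T^l(c_\alpha)$ together with the prefix-sharing of Definition \ref{deftrho}, the factor $c_\alpha[l,\,l+q_n-2-k]$ occurs in $c_\alpha$ both at the fixed position $l$ and at the moving position $\rho_n+k$; equivalently $T^{\rho_n+k}(c_\alpha)\to T^l(c_\alpha)$ as $n\to\infty$. The difficulty is to convert this convergence of the minimal offsets $\rho_n$ into the dichotomy of $(ii)$. I would use the Rauzy-graph structure of Proposition \ref{rauzystructure} and the count of turns of $c_\alpha$ around the referent cycle in Theorem \ref{caractournerauzy}, together with the aperiodicity of $c_\alpha$, to argue that a factor of length $q_n-1-k\to\infty$ can sit simultaneously at $l$ and at $\rho_n+k$ only if $\rho_n$ stabilises near $0$ (so $(\rho_n)$ converges, alternative $1$) or else $\rho_n$ stays within a bounded distance of $q_n$ (so $q_n-\rho_n$ is bounded along a subsequence, alternatives $2$ and $3$). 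The delicate point is controlling the admissible offsets uniformly in $n$, and here the remark following Proposition \ref{interceptmoinsun}, which pins down $\Psi_n(\rho)$ as $q_{2\lfloor n/2\rfloor}-1-k$ or $q_{2\lfloor n/2\rfloor+1}-1-k$ whenever $c_\alpha$ is a strict suffix, is the natural tool to make the reduction precise.
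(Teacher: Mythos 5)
Most of your cycle is sound and close to the paper's own route. Your telescoping argument showing that alternatives 2) and 3) force $\rho_{2m}=q_{2m}-c$ for a constant $c\geq 1$ (resp.\ along odd indices), followed by the comparison of $T^{c-1}(T^{\rho}(c_\alpha))$ with $0c_\alpha$ through their common prefixes with $T^{q_{2m}-1}(c_\alpha)$, is essentially the computation the paper performs, there phrased with $k=\liminf (q_n-\rho_n)$ and Proposition \ref{prefixecommun}; and your reading of the ``convergence'' of $(q_n-\rho_n)$ as finiteness of the liminf (boundedness along a subsequence) is exactly how the paper's own proof uses condition $ii)$. The identification of alternative 1) with the convergence of $(\rho_n)$ and with $\rho$ being a natural integer is also as in the paper, and your appeal to uniqueness of Ostrowski expansions supported on an interval to recover the digit pattern from $q_{n_j}-\rho_{n_j}=c$ is correct, though stated loosely.

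The genuine gap is $(i)\Rightarrow(ii)$ (equivalently $(i)\Rightarrow(iii)$), which you explicitly leave as ``the main obstacle'' with only a sketch via Rauzy graphs and a uniform control of offsets that you do not carry out. The missing idea is elementary and needs none of that machinery: if $\rho+k=l$ with $l$ a natural integer, choose $k+l$ minimal; if both $k,l\geq 1$, then $T^{k-1}(T^{\rho}(c_\alpha))$ and $T^{l-1}(c_\alpha)$ differ only in their first letter, so every prefix of their common tail is left special and that tail is $c_\alpha$ itself, contradicting minimality of $l$ (or aperiodicity). Hence either $T^{\rho}(c_\alpha)$ is a suffix of $c_\alpha$, in which case the uniqueness in Theorem \ref{interceptsturmien} makes $\rho$ a natural integer and $(\rho_n)$ converges (alternative 1); or $c_\alpha$ is a strict suffix of $T^{\rho}(c_\alpha)$, and the remark following Proposition \ref{interceptmoinsun} pins down $\Psi_n(\rho)$ as $q_{2\lfloor n/2\rfloor}-1-k$ or $q_{2\lfloor n/2\rfloor+1}-1-k$ eventually, so that $q_n-\rho_n$ is bounded along a parity class and the same telescoping-plus-uniqueness you use elsewhere yields alternatives 2) or 3). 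You name the right tool (that remark) but never make the reduction that renders it applicable, and the substitute argument you propose --- recovering the dichotomy from occurrences of long factors at positions $l$ and $\rho_n+k$ --- is precisely the ``delicate'' step you concede you cannot control; as written, this implication is not established.
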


\begin{proof}

Recalling that $\rho$ is a natural integer if and only if $T^{\rho}(c_\alpha)$ is a suffix of the characteristic word, which happens if and only if the sequence $(\rho_n)_{n\geq 1}$ is eventually constant in view of theorem \ref{interceptsturmien}, and all these conditions are equivalent to the coefficients $(b_i)_{i\geq 1}$ being eventually zero.

On the other hand, if $x$ is a sturmian word with formal intercept $\rho= \sum_{i\geq 0} b_{i+1}q_i$, and if $k=\liminf (q_n-\rho_n)< +\infty$, then since the considered sequence has natural integer values, the value $k$ is attained infinitely many times. We can reduce to a corresponding set of indexes all having the same parity, and if for example $q_{2(n+1)}-\rho_{2(n+1)}=k$ happens infinitely many times with $n\geq 1$, then for such an integer $n$, since $x$ and $T^{\rho_{2(n+1)}}(c_\alpha)$ share the same prefix of length $q_{2n+3}+q_{2n+2}-2-\rho_{2n+3}\geq q_{2(n+1)}+q_{2n+1}-2-\rho_{2n+2} =q_{2n+1}-2+k$ according to proposition \ref{prefixecommun}, the words $T^k(x)$ and $T^{\rho_{2(n+1)}+k}(c_\alpha)=T^{q_{2(n+1)}}(c_\alpha)$ share the same prefixes of length $q_{2n+1}-2$, which is the corresponding prefix of $c_\alpha$. This quantity going toward $+\infty$ with $n$, we deduce that $T^k(x)=c_\alpha$, and that $\rho$ has equivalence class zero.

Also, for $k\geq 0$, the sequences $(q_{2\left\lfloor \frac{n}{2}\right\rfloor}-1-k)_{n\geq 1}$ and $(q_{2\left\lfloor \frac{n}{2}\right\rfloor+1}-1-k)_{n\geq 1}$ are the formal intercepts of words of whom the characteristic word is a strict suffix according to proposition \ref{interceptmoinsun}. Seeing that for these two sequences, the coefficients $(b_i)_{i \geq1}$ satisfy the conditions of the proposition, we obtain the result.

\end{proof}

\begin{prop}\label{interceptincrementation}
Let $\rho= \sum_{i\geq 0}b_{i+1}q_i$ be an $\alpha$-number with non-zero equivalence class. Then the $\alpha$-number $\gamma=\rho+k$ is asymptotically given by the formula $\gamma_n=\rho_n+k$ for $n$ large enough. In other words, for all $k\geq 0$, there exists $N\geq 0$ such that for all $n\geq N$,
\begin{center}
	$\Psi_n(\rho +k)=\Psi_n(\rho)+k$.
\end{center}
This property is still true when $\rho$ has equivalence class zero under the assumption that $\rho +k$ is not a natural integer.
\end{prop}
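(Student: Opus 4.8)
The plan is to read off the coefficients of $\gamma=\rho+k$ directly from the characterisation of the formal intercept established in the proof of Theorem \ref{interceptsturmien}. Write $x=T^\rho(c_\alpha)$ and $y=T^k(x)=T^\gamma(c_\alpha)$. That characterisation says that for each $n$ the coefficient $\gamma_n$ is the \emph{unique} integer in $[0,q_n[$ such that $T^{\gamma_n}(c_\alpha)$ and $y$ share their prefix of length $q_n-1$, the uniqueness coming from the fact that for $m=q_n-1$ the referent cycle of $G_m$ has length $q_n$ and carries all of its $q_n$ vertices. Since $T^{\rho_n+k}(c_\alpha)=T^{k}\big(T^{\rho_n}(c_\alpha)\big)$, it therefore suffices to prove, for all large $n$, the two facts: (a) $\rho_n+k<q_n$; and (b) $x$ and $T^{\rho_n}(c_\alpha)$ share their prefix of length $q_n-1+k$. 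Indeed, applying $T^k$ to (b) shows that $y$ and $T^{\rho_n+k}(c_\alpha)$ share their prefix of length $q_n-1$, and then (a) together with the uniqueness above forces $\gamma_n=\rho_n+k$.

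Both (a) and (b) can be controlled through Proposition \ref{prefixecommun}, which gives that the longest common prefix of $x$ and $T^{\rho_n}(c_\alpha)$ has length $\lambda_N$, where $N=N(n)$ is the smallest index $\geq n$ with $b_{N+1}\neq 0$ (the two words being equal, so that (b) is automatic, if no such $N$ exists). Using the formula of Proposition \ref{lambdaprefixe} in the form $\lambda_N=(q_{N+1}-\rho_{N+1})+q_N-2\geq (q_{N+1}-\rho_{N+1})+q_n-2$, the estimate (b) reduces to the single inequality $q_{N+1}-\rho_{N+1}\geq k+1$. Thus the whole argument comes down to bounding $q_m-\rho_m$ from below along the indices $m=N(n)+1$.

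First I would treat the case where $\rho$ has non-zero equivalence class. By the argument in the proof of Proposition \ref{classenulle}, a non-zero class forces $\liminf_n(q_n-\rho_n)=+\infty$, that is $q_n-\rho_n\to+\infty$; and since $\rho_{n+1}=\rho_n+b_{n+1}q_n$ makes $(\rho_n)$ non-decreasing and not eventually constant, also $\rho_n\to+\infty$. The divergence $q_n-\rho_n\to+\infty$ yields (a) immediately, and since $N(n)\geq n\to\infty$ it likewise yields $q_{N(n)+1}-\rho_{N(n)+1}\to+\infty$, hence $q_{N+1}-\rho_{N+1}\geq k+1$ for large $n$, which is (b). This settles the first part; note that here the extra hypothesis that $\rho+k$ be non-integral is automatic, as $\rho+k$ has the same non-zero class as $\rho$.

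The main obstacle is the second case, $\rho$ of zero equivalence class with $\rho+k$ not a natural integer. Then $\rho$ itself is not a natural integer (otherwise $\rho+k$ would be one), so by Proposition \ref{classenulle} we are in alternative (2) or (3), and by the remark following Proposition \ref{interceptmoinsun} there is a fixed $k_0\geq 0$ with $\rho_n=q_{2\lfloor n/2\rfloor}-1-k_0$ (or its odd analogue) for all large $n$. The delicate point is to convert the hypothesis "$\rho+k$ not a natural integer" into the numerical bound $k\leq k_0$: this is where Proposition \ref{interceptmoinsun} is used, since it identifies $T^{k_0+1}(x)$ with $c_\alpha$, i.e. $\rho+(k_0+1)$ with the zero $\alpha$-number, so that $\rho+j$ is a natural integer exactly for $j\geq k_0+1$. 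Granting $k\leq k_0$, one has $q_n-\rho_n\geq k_0+1>k$ for large $n$ (giving (a)), while at the indices where $b$ is non-zero, $q_{N+1}-\rho_{N+1}=k_0+1\geq k+1$ (giving (b)), and the conclusion follows as before. I expect the identification of the threshold $k_0$, together with the bookkeeping ensuring $\rho_n+k$ stays in $[0,q_n[$, to require the most care.
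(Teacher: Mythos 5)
Your proof is correct and follows essentially the same route as the paper's: both reduce the claim to the uniqueness statement of Theorem \ref{interceptsturmien} at level $m=q_n-1$, and both obtain the two required estimates ($\rho_n+k<q_n$ and a common prefix of length at least $q_n-1+k$) from the divergence of $q_n-\Psi_n(\rho)$ given by Proposition \ref{classenulle} together with the common-prefix length of Proposition \ref{prefixecommun}. Your handling of the zero-class case is in fact more explicit than the paper's, which merely invokes the remark following Proposition \ref{interceptmoinsun}; your identification of the threshold $k_0$ and the equivalence ``$\rho+j$ is a natural integer iff $j\geq k_0+1$'' is precisely the detail that remark leaves implicit.
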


\begin{proof}
Let $x=T^\rho(c_\alpha)$ be the sturmian word with formal intercept $\rho$, according to theorem \ref{interceptsturmien}. Since $\rho$ has non-zero equivalence class, the sequence $(q_n-\Psi_n(\rho))_{n\geq 1}$ tends towards infinity according to proposition \ref{classenulle}, and there exists $N\geq 1$ such that $q_n > \Psi_n(\rho)+k$ for all $n\geq N$. The words $T^{\Psi_{n}(\rho+k)}(c_\alpha)$ and $T^k(T^{\rho}(c_\alpha))$ share the same prefix of length $q_{n+1}+q_n -2 -\Psi_{n+1}(\rho) - k\geq q_{n}-1$ according to proposition \ref{prefixecommun}, so that $T^{\Psi_{n}(\rho+k)}(c_\alpha)$ and $T^k(T^{\Psi_{n}(\rho)}(c_\alpha))$ have a prefix of length at least $q_{n}-1$ in common, and so $\Psi_n(\rho+k)=\Psi_{n}(\rho)+k$ for all $n\geq N$ according to the bijection between $\alpha$-numbers and sturmian words given in theorem \ref{interceptsturmien}.

For the case where we only assume that $\rho +k$ is not a natural integer but has equivalence class zero, we apply the remark stated after the proposition \ref{interceptmoinsun}.
\end{proof}

\begin{prop}\label{interceptidentification}
Let $\rho= \sum_{i\geq 0}b_{i+1}q_i$ and $\gamma= \sum_{i\geq 0} c_{i+1}q_i$ be two $\alpha$-numbers none of which has equivalence class zero. Then $\rho$ and $\gamma$ are equivalent if and only if, there exists $N\geq 1$ such that 
\begin{center}
	$b_i=c_i$ \quad for all $i\geq N$.
\end{center}
\end{prop}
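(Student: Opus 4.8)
The plan is to convert the dynamically-defined equivalence relation into an arithmetic condition on the truncations $\Psi_n(\rho)=\rho_n$, using Proposition \ref{interceptincrementation} as the main bridge. That proposition tells us that, as soon as an $\alpha$-number has non-zero equivalence class, adding an integer $k$ to it merely shifts all sufficiently far truncations by $k$, i.e. $\Psi_n(\rho+k)=\Psi_n(\rho)+k$ for $n$ large enough. Since both $\rho$ and $\gamma$ are assumed to have non-zero equivalence class, this applies to each of them, and it lets me pass freely between the statement $\rho+k=\gamma+l$ and statements about the integer sequences $(\rho_n)_{n\geq 1}$ and $(\gamma_n)_{n\geq 1}$.

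For the forward implication, I would start from $\rho\equiv\gamma$, that is $\rho+k=\gamma+l$ for some $k,l\geq 0$. Applying $\Psi_n$ to both sides and invoking Proposition \ref{interceptincrementation} on $\rho$ and on $\gamma$, I get $\rho_n+k=\gamma_n+l$ for all sufficiently large $n$, so the difference $\rho_n-\gamma_n=l-k$ is eventually a fixed constant. The coefficients are then recovered from consecutive truncations through the identity $b_{n+1}q_n=\rho_{n+1}-\rho_n$ (and likewise $c_{n+1}q_n=\gamma_{n+1}-\gamma_n$); subtracting the two relations telescopes and gives $(b_{n+1}-c_{n+1})q_n=0$ for $n$ large, whence $b_{n+1}=c_{n+1}$ since $q_n>0$. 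This yields $b_i=c_i$ for all $i$ large enough, as required.

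For the converse, suppose $b_i=c_i$ for all $i\geq N$. Because the tails of the two Ostrowski expansions coincide, the truncations differ by the fixed integer $D=\sum_{i=0}^{N-2}(b_{i+1}-c_{i+1})q_i$ for every $n\geq N$, that is $\rho_n=\gamma_n+D$. I would then split on the sign of $D$: if $D\geq 0$, Proposition \ref{interceptincrementation} applied to $\gamma$ gives $\Psi_n(\gamma+D)=\gamma_n+D=\rho_n=\Psi_n(\rho)$ for $n$ large, and the projective-limit uniqueness underlying Theorem \ref{interceptsturmien} (agreement of truncations for all large $n$ forces agreement for all $n$, since $\Psi_n=\Psi_n^m\circ\Psi_m$) yields $\rho=\gamma+D$, hence $\rho\equiv\gamma$; if $D<0$, the symmetric argument applied to $\rho$ realises $\gamma=\rho+(-D)$ and again $\rho\equiv\gamma$.

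The only real content is carried by Proposition \ref{interceptincrementation}, and the decisive place where the hypothesis that neither $\rho$ nor $\gamma$ has equivalence class zero is indispensable: without it the shift formula $\Psi_n(\rho+k)=\Psi_n(\rho)+k$ can fail, because the truncations can carry and wrap around, exactly as exhibited by the two competing expansions of ``$-1$'' in Proposition \ref{interceptmoinsun}, and the clean correspondence between equivalence and eventual coincidence of coefficients breaks down. Beyond this, the main thing to get right is the bookkeeping of the finite offset $D$ and its sign; the index manipulations inside the partial sums are routine.
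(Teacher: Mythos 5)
Your proof is correct and follows essentially the same route as the paper: both directions hinge on Proposition \ref{interceptincrementation} together with the identity $b_{n+1}q_n=\Psi_{n+1}(\rho)-\Psi_n(\rho)$, applied to consecutive truncations. Your converse merely makes explicit (via the sign split on $D$ and the projective-limit uniqueness of $\alpha$-numbers) the step the paper leaves implicit when it asserts that truncations differing asymptotically by a constant force equivalence.
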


\begin{proof}
Let $k\geq 0$ and $l\geq 0$ be such that $\rho+k=\gamma+l$ from definition \ref{interceptequivalent}, and let $N\geq 1$ be such that for all $n\geq N$, $\Psi_n(\rho+k)=\Psi_n(\rho)+k$
and $\Psi_n(\gamma+l)=\Psi_n(\gamma)+l$. Then, by the formulas
\begin{center}
	$b_{n+1}q_n =\Psi_{n+1}(\rho)-\Psi_n(\rho)  = \Psi_{n+1}(\rho+k)-\Psi_n(\rho+k) = \Psi_{n+1}(\gamma+l)-\Psi_n(\gamma+l) = \Psi_{n+1}(\gamma)-\Psi_n(\gamma) =c_{n+1}q_n$
\end{center}
we deduce that $b_i=c_i$ for all $i$ large enough.

Conversely, if $b_i=c_i$ for all $i$ large enough, then for $n$ large enough, we have
\begin{center}
	$\Psi_{n+1}(\rho)-\Psi_n(\rho)=\Psi_{n+1}(\gamma)-\Psi_n(\gamma)$
\end{center}
which implies that the sequences $(\Psi_n(\rho))_{n\geq 1}$ and $(\Psi_n(\gamma))_{n\geq 1}$ differ asymptotically by a constant, and this implies that the $\alpha$-numbers $\rho$ and $\gamma$ are equivalent.
\end{proof}

\begin{defi}\label{interceptsupport}
Let $\rho=\sum_{i\geq 0}b_{i+1}q_i$ be an $\alpha$-number that is not a natural integer. We define the support $\textup{Supp}(\rho)$ of $\rho$ as 
\begin{center}
	$\textup{Supp}(\rho)=\{n\geq 0 \ |\ b_{n+1}\neq 0 \}$
\end{center}
and the function $\Lambda_{\rho}$ as
\begin{center}
	$\Lambda_{\rho}(n)=\min ( \textup{Supp}(\rho)\cap [n,+\infty[)$
\end{center}
which is always defined since $\rho$ is not a natural integer.
\end{defi}

\begin{prop}\label{inegalitesupport}
Let $n \geq 0$, and $\rho$ be an $\alpha$-number. Then
\begin{enumerate}
	\item $n\in \textup{Supp}(\rho)$ if and only if $\Psi_{n+1}(\rho)\geq q_n$, in particular we have the inequalities :
	\begin{center}
		$\Psi_{\Lambda_{\rho}(n)+1}(\rho)\geq q_{\Lambda_{\rho}(n)} \geq q_n$,
	\end{center}
	\item if $n\in \textup{Supp}(\rho)$, then $b_{n+2}\neq a_{n+2}$, in particular we have the inequalities :
	\begin{center}
		$\Psi_{\Lambda_{\rho}(n)+2}(\rho)< q_{\Lambda_{\rho}(n)+2}-q_{\Lambda_{\rho}(n)}=a_{\Lambda_{\rho}(n)+2}q_{\Lambda_{\rho}(n)+1}$,
	\end{center}
	\item $\Psi_{\Lambda_{\rho}(n)+1}(\rho)= b_{\Lambda_{\rho}(n)+1}q_{\Lambda_{\rho}(n)}+\Psi_{n}(\rho)$.
\end{enumerate}
\end{prop}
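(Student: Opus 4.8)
The plan is to derive all three items directly from the single additivity relation $\Psi_{n+1}(\rho) = \Psi_n(\rho) + b_{n+1}q_n$ together with the Ostrowski conditions obeyed by the coefficients $(b_i)_{i\geq 1}$. The one quantitative ingredient I will use repeatedly is the bound $\Psi_n(\rho) < q_n$, valid for every $n\geq 0$: this is exactly condition $i)$ of the equivalence recalled before Definition \ref{interceptdef}, applied to the truncation $\Psi_n(\rho)=\sum_{i=0}^{n-1}b_{i+1}q_i$, whose first $n$ coefficients again satisfy the Ostrowski conditions; for $n=0$ it reduces to $0<q_0=1$.

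For item 1, I would split on the value of $b_{n+1}$. If $b_{n+1}=0$, that is $n\notin\textup{Supp}(\rho)$, then $\Psi_{n+1}(\rho)=\Psi_n(\rho)<q_n$. If $b_{n+1}\geq 1$, that is $n\in\textup{Supp}(\rho)$, then $\Psi_{n+1}(\rho)\geq b_{n+1}q_n\geq q_n$ since $\Psi_n(\rho)\geq 0$. This gives the equivalence $n\in\textup{Supp}(\rho)\Leftrightarrow\Psi_{n+1}(\rho)\geq q_n$. The displayed inequalities then follow by taking $m=\Lambda_\rho(n)$: since $m\in\textup{Supp}(\rho)$ by definition of $\Lambda_\rho$, the equivalence yields $\Psi_{m+1}(\rho)\geq q_m$, and $q_m\geq q_n$ because $m\geq n$ and the sequence $(q_k)$ is non-decreasing, as follows from $q_{k+1}=a_{k+1}q_k+q_{k-1}$ with $a_{k+1}\geq 1$ and $q_{k-1}\geq 0$.

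For item 2, the implication $b_{n+2}\neq a_{n+2}$ is precisely the contrapositive of the Ostrowski condition $b_{i+1}=a_{i+1}\Rightarrow b_i=0$ taken at $i=n+1$, using $b_{n+1}\neq 0$. Writing $m=\Lambda_\rho(n)$, this gives $b_{m+2}\leq a_{m+2}-1$, whence
\[
\Psi_{m+2}(\rho)=\Psi_{m+1}(\rho)+b_{m+2}q_{m+1}<q_{m+1}+(a_{m+2}-1)q_{m+1}=a_{m+2}q_{m+1},
\]
where I used $\Psi_{m+1}(\rho)<q_{m+1}$; the recurrence $q_{m+2}=a_{m+2}q_{m+1}+q_m$ then identifies the right-hand side with $q_{m+2}-q_m$.

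Item 3 is a telescoping computation. With $m=\Lambda_\rho(n)$, the minimality in the definition of $\Lambda_\rho(n)$ forces $b_{i+1}=0$ for every $n\leq i<m$, so in the difference $\Psi_{m+1}(\rho)-\Psi_n(\rho)=\sum_{i=n}^{m}b_{i+1}q_i$ all terms vanish except the one at $i=m$, leaving $b_{m+1}q_m$; rearranging gives the stated identity. I do not expect any genuine obstacle here, since everything reduces to the definitions and the Ostrowski conditions; the only points demanding care are invoking the bound $\Psi_k(\rho)<q_k$ at the correct indices and keeping the index bookkeeping of the telescoping sum in item 3 consistent.
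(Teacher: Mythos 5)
Your proof is correct and follows essentially the same route as the paper's: part 1 via the equivalence $b_{n+1}\neq 0\Leftrightarrow\Psi_{n+1}(\rho)\geq q_n$ derived from the Ostrowski bound $\Psi_n(\rho)<q_n$, part 2 via the Ostrowski implication $b_{i+1}=a_{i+1}\Rightarrow b_i=0$ combined with $\Psi_{m+1}(\rho)<q_{m+1}$, and part 3 via telescoping using the vanishing of the coefficients below $\Lambda_\rho(n)$. You are, if anything, slightly more explicit than the paper (whose chain of inequalities in part 2 contains a strict inequality that should be an equality), but there is no substantive difference.
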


\begin{proof}
We write the $\alpha$-number $\rho$ in the form $\rho=\sum_{i\geq 0} b_{i+1}q_i$, where the coefficients $(b_i)_{i\geq 1}$ satisfy the Ostrowski conditions.

1) According to the definition \ref{interceptsupport}, $n \in \textup{Supp}(\rho)$ if and only if $b_{n+1}\neq 0$, so by characterisations of the Ostrowski conditions, if and only if $\Psi_{n+1}(\rho)=\sum_{i=0}^{n}b_{i+1}q_i \geq q_n$. For the second statement, we apply the result to the integer $\Lambda_{\rho}(n)$ which, according to definition \ref{interceptsupport} belongs to the support of $\rho$. The inequality on the right being consequence of the increasing of the sequence $(q_n)_{n\geq -1}$ and the trivial inequality $\Lambda_{\rho}(n)\geq n$.

2) It is clear that if $n\in \textup{Supp}(\rho)$, then $b_{n+2}\neq a_{n+2}$. The inequality is obtained by
\begin{align*}
	\Psi_{\Lambda_{\rho}(n)+2}(\rho)&=\displaystyle \sum_{i=0}^{\Lambda_{\rho}(n)+1} b_{i+1}q_i =\sum_{i=0}^{\Lambda_{\rho}(n)} b_{i+1}q_i + b_{\Lambda_{\rho}(n)+2}q_{\Lambda_{\rho}(n)+1} \\ &< q_{\Lambda_{\rho}(n)+1} + b_{\Lambda_{\rho}(n)+2}q_{\Lambda_{\rho}(n)+1} \\ &< (b_{\Lambda_{\rho}(n)+2}+1)q_{\Lambda_{\rho}(n)+1}\\ & \leq a_{\Lambda_{\rho}(n)+2}q_{\Lambda_{\rho}(n)+1}=q_{\Lambda_{\rho}(n)+2}-q_{\Lambda_{\rho}(n)} 
\end{align*}
and by characterisations of the Ostrowski conditions.

3) We write :
\begin{align*}
	\displaystyle \Psi_{\Lambda_{\rho}(n)+1}(\rho)&=\displaystyle \sum_{i=0}^{\Lambda_{\rho}(n)} b_{i+1}q_i = b_{\Lambda_{\rho}(n)+1}q_{\Lambda_{\rho}(n)}+\Psi_{\Lambda_{\rho}(n)}(\rho)\\ &= b_{\Lambda_{\rho}(n)+1}q_{\Lambda_{\rho}(n)}+\Psi_{n}(\rho)
\end{align*}
according to definition \ref{interceptsupport}.

\end{proof}

\subsection{Infinite products and complementation}

For $m\geq 1$ an integer, written $m=\sum_{i=0}^Nb_{i+1}q_i$ where $(b_i)_{i=1}^{N+1}$ satisfies the Ostrowski conditions, the prefix of length $m$ of the characteristic word $c_\alpha$ is known to be given by
\begin{center}
	$\displaystyle \mathbb{P}_m(c_\alpha)=\sideset{}{^\downarrow}\prod_{i=0}^Ns_i^{b_{i+1}}=s_{N}^{b_{N+1}}s_{N-1}^{b_N}\cdots s_0^{b_1}$,
\end{center}
see \cite{shallitauto}, theorem 9.1.13.

\begin{coro}\label{prefixesuffixefini}
Let $m\geq 1$ and $p\geq 1$ be two integers and $N\geq 1$ such that $m+p=q_{N+1}-2$, with $m= \sum_{i=0}^Nb_{i+1}q_i$  and  $p= \sum_{i=0}^Nc_{i+1}q_i$ where $(b_i)_{i=1}^{N+1}$ and $(c_i)_{i=1}^{N+1}$ satisfy the Ostrowski conditions. Then we have the product formula :
\begin{center}
	$s_{N+1}^{--} =\displaystyle \sideset{}{^\downarrow}\prod_{i=0}^Ns_i^{b_{i+1}} \cdot \sideset{}{^\uparrow}\prod_{i=0}^N \widetilde{s_{i}}^{c_{i+1}}$.
\end{center}
In particular, the word
\begin{center}
	$\displaystyle \widetilde{\mathbb{P}}_{p}(c_\alpha)=\sideset{}{^\uparrow}\prod_{i=0}^N \widetilde{s_{i}}^{c_{i+1}}= \widetilde{s_{0}}^{c_1}\widetilde{s_{1}}^{c_2}\cdots \widetilde{s_{N}}^{c_{N+1}}$
\end{center}
is a factor of $c_\alpha$.
\end{coro}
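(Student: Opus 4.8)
The plan is to exploit that $s_{N+1}^{--}$ is a \emph{palindrome} and to split it into a prefix and a suffix of lengths $m$ and $p$. First I would record the three structural facts that make everything fit together. Since $s_{N+1}$ is a standard word, the word $s_{N+1}^{--}$ is central, hence palindromic, and it is a (palindromic) prefix of $c_\alpha$; moreover its length is $q_{N+1}-2=m+p$ by hypothesis. Consequently $s_{N+1}^{--}$ is exactly tiled by its prefix of length $m$ followed by its suffix of length $p$, with no overlap and no gap.

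Next I would identify the left factor. Because $m\leq q_{N+1}-2$, the prefix of length $m$ of $s_{N+1}^{--}$ coincides with $\mathbb{P}_m(c_\alpha)$, which by the product formula recalled just before the statement equals $\sideset{}{^\downarrow}\prod_{i=0}^N s_i^{b_{i+1}}$. This pins down the first of the two factors appearing in the claim.

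The core step is to identify the suffix of length $p$. Using that $s_{N+1}^{--}$ is a palindrome, this suffix is the reversal of the prefix of length $p$ of $s_{N+1}^{--}$, namely $\widetilde{\mathbb{P}_p(c_\alpha)}$. Applying the same product formula to $p=\sum_{i=0}^N c_{i+1}q_i$ gives $\mathbb{P}_p(c_\alpha)=\sideset{}{^\downarrow}\prod_{i=0}^N s_i^{c_{i+1}}=s_N^{c_{N+1}}s_{N-1}^{c_N}\cdots s_0^{c_1}$, and reversing by the rules $\widetilde{uv}=\widetilde{v}\,\widetilde{u}$ and $\widetilde{u^k}=\widetilde{u}^{\,k}$ turns this decreasing product into the increasing product $\sideset{}{^\uparrow}\prod_{i=0}^N \widetilde{s_i}^{c_{i+1}}$. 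Concatenating the length-$m$ prefix with the length-$p$ suffix then reproduces $s_{N+1}^{--}$ and yields the claimed formula. The "in particular" assertion is immediate, since $\widetilde{\mathbb{P}_p(c_\alpha)}$ is by construction a suffix of the prefix $s_{N+1}^{--}$ of $c_\alpha$, hence a factor of $c_\alpha$.

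I do not expect a serious obstacle here; the content is essentially bookkeeping organized around palindromicity. The only points requiring care are the reversal-of-product identity (that reversal sends a downward product of the $s_i$ to an upward product of the mirror words $\widetilde{s_i}$, with the exponents preserved) and the length arithmetic $m+p=|s_{N+1}^{--}|=q_{N+1}-2$, which is exactly what guarantees that the prefix of length $m$ and the suffix of length $p$ tile the palindrome without overlap.
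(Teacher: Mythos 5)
Your proof is correct and follows essentially the same route as the paper's: both rest on the palindromicity of the central word $s_{N+1}^{--}$, the length identity $m+p=q_{N+1}-2$, and the Ostrowski product formula for prefixes of $c_\alpha$, so that $s_{N+1}^{--}=\mathbb{P}_m(c_\alpha)\widetilde{\mathbb{P}}_p(c_\alpha)$. The paper states this in a single line; you have merely spelled out the reversal-of-product bookkeeping that it leaves implicit.
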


\begin{proof}
Since $s_{N+1}^{--}$ is palindromic, we have :
\begin{center}
	$s_{N+1}^{--}=\mathbb{P}_m(c_\alpha)\widetilde{\mathbb{P}}_{p}(c_\alpha)$
\end{center}
where the result comes from.
\end{proof}

The corollary \ref{prefixesuffixefini} concerns finite words of the form
\begin{center}
$\displaystyle \widetilde{\mathbb{P}}_{m}(c_\alpha)=\sideset{}{^\uparrow}\prod_{i=0}^N \widetilde{s_{i}}^{b_{i+1}}= \widetilde{s_{0}}^{b_1}\widetilde{s_{1}}^{b_2}\cdots \widetilde{s_{N}}^{b_{N+1}}$
\end{center}
where $m= \sum_{i=0}^Nb_{i+1}q_i$ is an integer written in the Ostrowski numeration system. In view of the nature of the product, it is natural to consider infinite words of the form
\begin{center}
	$\displaystyle \prod_{i=0}^{+\infty} \widetilde{s_{i}}^{b_{i+1}}$
\end{center}
where the sequence  $(b_i)_{i\geq 1}$ satisfies the Ostrowski conditions. This leads to the following definition.

\begin{defi}\label{defprho}
Let $\rho=\sum_{i\geq 0}b_{i+1}q_i$ be an $\alpha$-number that is not a natural integer. We define the sturmian word $\widetilde{\mathbb{P}}_{\rho}(c_\alpha)$ as the infinite product :
\begin{center}
	$\displaystyle \widetilde{\mathbb{P}}_{\rho}(c_\alpha)=\prod_{i=0}^{+\infty} \widetilde{s_{i}}^{b_{i+1}}=\lim_{N\rightarrow +\infty} \sideset{}{^\uparrow}\prod_{i=0}^N \widetilde{s_{i}}^{b_{i+1}}=\lim_{N\rightarrow +\infty} \widetilde{\mathbb{P}}_{\rho_N}(c_\alpha).$
\end{center}
\end{defi}

\begin{prop}\label{interceptproduit}
Let $\rho$ be an $\alpha$-number of the slope $\alpha$, that is not a natural integer, and such that $\rho\notin \{ \sigma_0, \sigma_1 \}$. The formal intercept of the sturmian word $\widetilde{\mathbb{P}}_{\rho}(c_\alpha)$  is asymptotically given by the sequence
\begin{center}
	$\displaystyle \overline{\rho}=(\Psi_n(q_{\Lambda_{\rho}(n)+1}-2-\rho_{\Lambda_{\rho}(n)+1}))_{n\geq 0}$.
\end{center}
We call this $\alpha$-number the complement of $\rho$.
\end{prop}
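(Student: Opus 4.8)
The plan is to identify the formal intercept of the infinite product $\widetilde{\mathbb{P}}_{\rho}(c_\alpha)$ by comparing its prefixes against the prefixes of the words $T^{k}(c_\alpha)$ for suitable integers $k$, and then invoking Theorem \ref{interceptsturmien} to read off the intercept. The governing idea is Corollary \ref{prefixesuffixefini}: for a truncation index $N$ lying in $\textup{Supp}(\rho)$, the finite word $\widetilde{\mathbb{P}}_{\rho_{N+1}}(c_\alpha)$ is exactly the reversed prefix complementing $\mathbb{P}_{\rho_{N+1}}(c_\alpha)$ inside the palindrome $s_{N+1}^{--}$. Concretely, writing $s_{N+1}^{--}=\mathbb{P}_{\rho_{N+1}}(c_\alpha)\,\widetilde{\mathbb{P}}_{p_N}(c_\alpha)$ with $\rho_{N+1}+p_N=q_{N+1}-2$, the word $\widetilde{\mathbb{P}}_{p_N}(c_\alpha)$ occurs in $c_\alpha$ precisely at position $\rho_{N+1}$, i.e. it is the prefix of length $p_N=q_{N+1}-2-\rho_{N+1}$ of $T^{\rho_{N+1}}(c_\alpha)$. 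This is the source of the candidate value $q_{\Lambda_\rho(n)+1}-2-\rho_{\Lambda_\rho(n)+1}$.

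First I would fix $n\geq 0$ and set $N=\Lambda_\rho(n)$, so that $b_{N+1}\neq 0$ and, by Proposition \ref{inegalitesupport}, $\Psi_{N+1}(\rho)\geq q_N\geq q_n$ together with $\Psi_{N+2}(\rho)<a_{N+2}q_{N+1}$. I would show that the prefix of $\widetilde{\mathbb{P}}_\rho(c_\alpha)$ of length $q_n-1$ coincides with the prefix of length $q_n-1$ of $T^{k_N}(c_\alpha)$, where $k_N:=\Psi_{N+1}(q_{N+1}-2-\rho_{N+1})$ is the integer whose Ostrowski expansion agrees with the proposed $\overline\rho$ up to level $N+1$. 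The key step is to verify that passing from the truncation at $N$ to later truncations does not disturb this length-$(q_n-1)$ prefix: since all coefficients $b_{m+1}$ with $n\le m<N$ vanish (by definition of $\Lambda_\rho$), the initial segment $\widetilde{s_0}^{b_1}\cdots\widetilde{s_{N}}^{b_{N+1}}$ already has length $p_N=q_{N+1}-2-\rho_{N+1}\geq q_n-1$ (using $\rho_{N+1}<q_{N+1}-q_N$, which follows from the support inequalities), so the length-$(q_n-1)$ prefix of the full infinite product is determined by this finite block and equals the corresponding prefix of $T^{\rho_{N+1}}(c_\alpha)$ via Corollary \ref{prefixesuffixefini}.

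Next I would translate this prefix agreement into an equality of intercepts modulo $q_n$. Since $\widetilde{\mathbb{P}}_\rho(c_\alpha)$ and $T^{\rho_{N+1}}(c_\alpha)$ share a prefix of length $\geq q_n-1$, and since $q_{N+1}-2-\rho_{N+1}$ is, modulo $q_n$, congruent to the claimed value $\Psi_n(q_{N+1}-2-\rho_{N+1})$, the characterisation in Theorem \ref{interceptsturmien}—which says the intercept at level $n$ is the unique residue $<q_n$ determined by the length-$(q_n-1)$ prefix—forces the $n$-th component of the formal intercept of $\widetilde{\mathbb{P}}_\rho(c_\alpha)$ to equal $\Psi_n(q_{\Lambda_\rho(n)+1}-2-\rho_{\Lambda_\rho(n)+1})$. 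The word ``asymptotically'' in the statement then amounts to checking that for distinct $n$ the residues $\Psi_n(q_{\Lambda_\rho(n)+1}-2-\rho_{\Lambda_\rho(n)+1})$ are mutually compatible modulo the continuants, i.e. that they genuinely define a single $\alpha$-number; this is where the exclusions $\rho\notin\{\sigma_0,\sigma_1\}$ and ``$\rho$ not a natural integer'' enter, ruling out the degenerate cases of Proposition \ref{interceptmoinsun} where the complementary sequence would collapse to one of the two ``$-1$'' intercepts.

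I expect the main obstacle to be the bookkeeping across the levels where $b_{m+1}=0$: one must be careful that $\Lambda_\rho(n)$ can jump far beyond $n$, so the naive guess $q_{n+1}-2-\rho_{n+1}$ is not correct and must be replaced by the value at the next support index $\Lambda_\rho(n)$. Controlling the interaction between the geometric growth of the block lengths $|\widetilde{s_i}|=q_i$ and the Ostrowski constraint $\Psi_{N+2}(\rho)<a_{N+2}q_{N+1}$ is the delicate point, since it is precisely this inequality that guarantees $p_N\geq q_n-1$ and hence that the finite truncation already controls the desired prefix. Once the prefix-length estimate is secured, the remainder is a direct application of the uniqueness in Theorem \ref{interceptsturmien}.
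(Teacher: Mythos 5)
Your plan follows the same route as the paper (Corollary \ref{prefixesuffixefini} to locate the truncated product inside the palindrome $s_{N+1}^{--}$, the support inequalities of Proposition \ref{inegalitesupport}, uniqueness from Theorem \ref{interceptsturmien}), but the central identification is written with the two halves of the palindrome swapped, and as stated the argument derives the wrong intercept. With $N=\Lambda_\rho(n)$, the truncation of the infinite product is $\widetilde{s_0}^{b_1}\cdots\widetilde{s_N}^{b_{N+1}}=\widetilde{\mathbb{P}}_{\rho_{N+1}}(c_\alpha)$, whose length is $\sum_{i=0}^{N}b_{i+1}q_i=\rho_{N+1}$, not $p_N=q_{N+1}-2-\rho_{N+1}$ as you claim. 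The relevant instance of Corollary \ref{prefixesuffixefini} is $s_{N+1}^{--}=\mathbb{P}_{q_{N+1}-2-\rho_{N+1}}(c_\alpha)\,\widetilde{\mathbb{P}}_{\rho_{N+1}}(c_\alpha)$, which exhibits this truncation as the prefix of length $\rho_{N+1}$ of $T^{q_{N+1}-2-\rho_{N+1}}(c_\alpha)$; the bound that makes this prefix long enough is $\rho_{N+1}\geq q_N\geq q_n$ from Proposition \ref{inegalitesupport} (because $N\in\textup{Supp}(\rho)$), not the inequality $\rho_{N+1}<q_{N+1}-q_N$ you invoke (which can moreover fail when $b_{N+1}=a_{N+1}$). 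Instead you conclude that $\widetilde{\mathbb{P}}_\rho(c_\alpha)$ and $T^{\rho_{N+1}}(c_\alpha)$ share a prefix of length at least $q_n-1$; fed into Theorem \ref{interceptsturmien}, this would give $\Psi_n(\rho_{N+1})=\rho_n$ as the $n$-th component of the intercept, i.e.\ it would show $\widetilde{\mathbb{P}}_\rho(c_\alpha)=T^{\rho}(c_\alpha)$, which is false in general. The passage from this to the claimed value $\Psi_n(q_{N+1}-2-\rho_{N+1})$ in your final paragraph is therefore unsupported.

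Once the decomposition is put in the right order the proof closes exactly as in the paper: the common prefix of $\widetilde{\mathbb{P}}_\rho(c_\alpha)$ and $T^{q_{N+1}-2-\rho_{N+1}}(c_\alpha)$ has length $\rho_{N+1}\geq q_N$, the inequality $q_{N+1}-2-\rho_{N+1}<q_{N+1}$ lets the argument of Theorem \ref{interceptsturmien} read off the $N$-th component of the intercept as the reduction of $q_{N+1}-2-\rho_{N+1}$ modulo $q_N$, and the component at level $n$ follows from $n\leq\Lambda_\rho(n)$. Your remarks on compatibility across levels and on the exclusions $\rho\notin\{\sigma_0,\sigma_1\}$ are sensible but secondary to this repair.
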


\begin{proof}
Let $n\geq 0$ be such that $\rho_{\Lambda_{\rho}(n)+1}\leq q_{\Lambda_{\rho}(n)+1}-2$. The prefix of length $\rho_{\Lambda_{\rho}(n)+1}$ of $\widetilde{\mathbb{P}}_{\rho}(c_\alpha)$ is given by
\begin{center}
	$\displaystyle \mathbb{P}_{\rho_{\Lambda_{\rho}(n)+1}}(\widetilde{\mathbb{P}}_{\rho}(c_\alpha))=\sideset{}{^\uparrow}\prod_{i=0}^{\Lambda_{\rho}(n)} \widetilde{s_{i}}^{b_{i+1}}$
\end{center}
according to corollary \ref{prefixesuffixefini}. Since $\rho_{\Lambda_{\rho}(n)+1}\geq q_{\Lambda_{\rho}(n)}$ in view of proposition \ref{inegalitesupport}, we see that the two words
\begin{center}
	$T^{q_{\Lambda_{\rho}(n)+1}-2-\rho_{\Lambda_{\rho}(n)+1}}(c_\alpha)$ \quad and \quad $\widetilde{\mathbb{P}}_{\rho}(c_\alpha)$
\end{center}
share the same prefix of length $q_{\Lambda_{\rho}(n)}-1$. Plus, with the general inequality
\begin{center}
	$q_{\Lambda_{\rho}(n)+1}-2-\rho_{\Lambda_{\rho}(n)+1} < q_{\Lambda_{\rho}(n)+1}$
\end{center}
seen in proposition \ref{inegalitesupport}, and from the proof of theorem \ref{interceptsturmien}, the $\Lambda_{\rho}(n)$-th term of the formal intercept of $\widetilde{\mathbb{P}}_{\rho}(c_\alpha)$ is given by the reduction modulo $q_{\Lambda_{\rho}(n)}$ of $q_{\Lambda_{\rho}(n)+1}-2-\rho_{\Lambda_{\rho}(n)+1}$, which equals $\Psi_{\Lambda_{\rho}(n)}(q_{\Lambda_{\rho}(n)+1}-2-\rho_{\Lambda_{\rho}(n)+1})$. The results follows since $n\leq \Lambda_{\rho}(n)$ thanks to proposition \ref{inegalitesupport}.
\end{proof}

As a consequence, for all $M\in \textup{Supp}(\rho)\cap [N, +\infty[$, we have :
\begin{center}
	$\Psi_N(q_{\Lambda_{\rho}(N)+1}-2-\rho_{\Lambda_{\rho}(N)+1})=\Psi_N(q_{\Lambda_{\rho}(M)+1}-2-\rho_{\Lambda_{\rho}(M)+1})$.
\end{center}
The following corollary is a consequence of the previous proposition applied to the two sturmian words $10c_\alpha$ and $01c_\alpha$, to see that the complement of their formal intercept are both zero.

\begin{coro}\label{factorisationcarac}
If $a_1\geq 2$, then :
\begin{center}
	$\displaystyle c_\alpha=\widetilde{s_0}^{a_1-2}\prod_{i\geq 1}\widetilde{s_{2i}}^{a_{2i+1}}$ \quad and \quad $\displaystyle c_\alpha=\widetilde{s_0}^{a_1-1}\widetilde{s_1}^{a_2-1}\prod_{i\geq 1}\widetilde{s_{2i+1}}^{a_{2i+2}}$.
\end{center}
If $a_1=1$ and $a_2\geq 2$, then :
\begin{center}
	$\displaystyle c_\alpha=\widetilde{s_1}^{a_2-2}\widetilde{s_2}^{a_3-1}\prod_{i\geq 2}\widetilde{s_{2i}}^{a_{2i+1}}$ \quad and \quad $\displaystyle c_\alpha=\widetilde{s_1}^{a_2-2}\prod_{i\geq 1}\widetilde{s_{2i+1}}^{a_{2i+2}}$
\end{center}
If $a_1=1$ and $a_2=1$, we have :
\begin{center}
	$\displaystyle c_\alpha=\widetilde{s_2}^{a_3-1}\prod_{i\geq 2}\widetilde{s_{2i}}^{a_{2i+1}}$ \quad and \quad $\displaystyle c_\alpha=\prod_{i\geq 1}\widetilde{s_{2i+1}}^{a_{2i+2}}$.
\end{center}
\end{coro}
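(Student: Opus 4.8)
The plan is to read the six identities off Proposition~\ref{interceptproduit}, exploiting that the two words $10c_\alpha$ and $01c_\alpha$ are sturmian and that the complement of each of their formal intercepts is the zero $\alpha$-number. Once each product $\widetilde{\mathbb{P}}_{\bullet}(c_\alpha)$ is shown to equal $c_\alpha$, the displayed factorisations are just the unfolding of those products via Definition~\ref{defprho} and Corollary~\ref{prefixesuffixefini}.

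First I would pin down the two intercepts. Both words have $c_\alpha$ as a strict suffix, and both are sturmian (the competing one-letter prepension to $0c_\alpha$, resp. to $1c_\alpha$, is ruled out by the balance condition, so property~4 of characteristic words forces the stated choice). Since $T(10c_\alpha)=0c_\alpha$ and $T(01c_\alpha)=1c_\alpha$, Definition~\ref{interceptequivalent} together with Proposition~\ref{interceptmoinsun} shows that the intercept $\rho$ of $10c_\alpha$ satisfies $\rho+1=\sigma_0$, and the intercept $\gamma$ of $01c_\alpha$ satisfies $\gamma+1=\sigma_1$. By the remark following Proposition~\ref{interceptmoinsun} this means $\Psi_n(\rho)=q_{2\lfloor n/2\rfloor}-2$ and $\Psi_n(\gamma)=q_{2\lfloor n/2\rfloor+1}-2$ for all $n$ large enough. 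Neither $\rho$ nor $\gamma$ is a natural integer, and neither equals $\sigma_0$ or $\sigma_1$, so Proposition~\ref{interceptproduit} applies to both.

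Next I would check that $\overline{\rho}$ and $\overline{\gamma}$ are the zero $\alpha$-number. The key observation is that, by the values above, whenever $s$ is a sufficiently large index of $\textup{Supp}(\rho)$ one has $\Psi_{s+1}(\rho)=q_{s+1}-2$, so $q_{s+1}-2-\Psi_{s+1}(\rho)=0$, and similarly for $\gamma$. Taking $s=\Lambda_\rho(n)$ in the complement formula $\overline{\rho}_n=\Psi_n(q_{\Lambda_\rho(n)+1}-2-\rho_{\Lambda_\rho(n)+1})$ of Proposition~\ref{interceptproduit} makes every coordinate vanish, so $\overline{\rho}=\overline{\gamma}=0$. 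Since a sturmian word is determined by its formal intercept (Theorem~\ref{interceptsturmien}), this forces $\widetilde{\mathbb{P}}_{\rho}(c_\alpha)=\widetilde{\mathbb{P}}_{\gamma}(c_\alpha)=c_\alpha$, that is, $c_\alpha=\prod_{i\geq 0}\widetilde{s_i}^{b_{i+1}}$ where $(b_{i+1})_{i\geq 0}$ is the Ostrowski expansion of $\rho$, and likewise with $\gamma$.

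Finally I would compute these two Ostrowski expansions explicitly, and this is where the three cases appear. The tails are forced: $\Psi_n(\rho)=q_{2\lfloor n/2\rfloor}-2$ produces the odd-indexed tail $\prod \widetilde{s_{2i+1}}^{a_{2i+2}}$ (second formula of each pair), and $\Psi_n(\gamma)=q_{2\lfloor n/2\rfloor+1}-2$ the even-indexed tail $\prod \widetilde{s_{2i}}^{a_{2i+1}}$ (first formula). The main obstacle is the bottom of each expansion: passing from $\sigma_0$ to $\rho=\sigma_0-1$ and from $\sigma_1$ to $\gamma=\sigma_1-1$ triggers a borrow in the Ostrowski system whose propagation is governed by the admissible ranges $0\leq b_1\leq a_1-1$ and $b_2\leq a_2$. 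When $a_1\geq 2$ the borrow is absorbed by the $q_0$- and $q_1$-digits, giving the first pair of formulas; when $a_1=1$ one has $q_1=q_0=1$, the $q_0$-digit is forced to be $0$, and the borrow climbs to the $q_1$- and $q_2$-digits, which must be split further according to whether $a_2\geq 2$ or $a_2=1$. Carrying out this finite bookkeeping in each regime and reading off the products $\widetilde{s_0}^{b_1}\widetilde{s_1}^{b_2}\cdots$ yields the three displayed pairs, the case $a_1=a_2=1$ being the degenerate one in which the leading factors collapse.
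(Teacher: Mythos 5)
Your strategy is exactly the paper's: the author's entire proof is the single sentence preceding the corollary, namely that Proposition~\ref{interceptproduit} applied to the sturmian words $10c_\alpha$ and $01c_\alpha$ shows that the complements of their formal intercepts vanish, whence the corresponding infinite products equal $c_\alpha$. Your identification of the two intercepts via $\rho+1=\sigma_0$ and $\gamma+1=\sigma_1$ (using Definition~\ref{interceptequivalent} and Proposition~\ref{interceptincrementation}), your observation that $\Psi_{s+1}(\rho)=q_{s+1}-2$ at every large $s$ in the support so that each coordinate $\Psi_n(q_{\Lambda_\rho(n)+1}-2-\rho_{\Lambda_\rho(n)+1})$ of the complement is $0$, and the appeal to Theorem~\ref{interceptsturmien} to conclude $\widetilde{\mathbb{P}}_{\rho}(c_\alpha)=c_\alpha$ are all correct, and in fact supply details the paper omits.

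The problem is the last step, which you wave off as ``finite bookkeeping'': that bookkeeping is the entire computational content of the corollary, and when one actually performs it the printed exponents in the two cases with $a_1=1$ do not come out. When $a_1=1$ one has $q_0=q_1=1$ and the digit $b_1$ is forced to be $0$, so the borrow resolves as
\begin{center}
$q_{2k}-2=(a_2-1)q_1+\displaystyle\sum_{i=1}^{k-1}a_{2i+2}q_{2i+1}$ \quad and \quad $q_{2k+1}-2=a_2q_1+(a_3-1)q_2+\displaystyle\sum_{i=2}^{k}a_{2i+1}q_{2i}$,
\end{center}
both right-hand sides being admissible Ostrowski expansions ($b_2=a_2$ is legal precisely because $b_1=0$). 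Reading these off via Definition~\ref{defprho} and Corollary~\ref{prefixesuffixefini} gives, for $a_1=1$ and $a_2\geq 2$, the products $\widetilde{s_1}^{a_2-1}\prod_{i\geq1}\widetilde{s_{2i+1}}^{a_{2i+2}}$ and $\widetilde{s_1}^{a_2}\widetilde{s_2}^{a_3-1}\prod_{i\geq2}\widetilde{s_{2i}}^{a_{2i+1}}$, not the displayed $\widetilde{s_1}^{a_2-2}(\cdots)$; and for $a_1=a_2=1$ the first product of the displayed pair acquires a leading factor $\widetilde{s_1}^{a_2}=\widetilde{s_1}=1$ that is absent from the statement. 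A concrete check: for $a_1=1$, $a_2=2$, $a_i=1$ ($i\geq3$) one has $c_\alpha=1101110110111011\cdots$, which begins with $1101$, whereas the displayed $\widetilde{s_1}^{a_2-2}\widetilde{s_3}^{a_4}\cdots=\widetilde{s_3}\cdots$ begins with $\widetilde{1101}=1011$. So either the statement must be corrected or your claim that the bookkeeping ``yields the three displayed pairs'' is false; in either case you cannot leave that computation implicit, since it is precisely where the corollary succeeds or fails.
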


\subsection{Factorisations of sturmian words}

In this subsection we give our main results on factorisations of sturmian words, see theorem \ref{theofactorisation}, and we start by a technical lemma.

\begin{lem}\label{complementairenoyau}
If $\rho$ has a non-zero equivalence class, then $\overline{\rho}$ has a non-zero equivalence class.
\end{lem}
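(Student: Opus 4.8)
The plan is to argue by contraposition, using the characterisation of the zero equivalence class in Proposition \ref{classenulle}. Since the partial sums $(\rho_n)_{n\geq 0}$ are non-decreasing, $\rho$ has zero class exactly when either $(\rho_n)$ is eventually constant (its coefficients $(b_i)$ are eventually zero, alternative 1) or $\liminf_n(q_n-\rho_n)<+\infty$ (alternatives 2) and 3), the $\sigma_0$/$\sigma_1$-like patterns); equivalently, $\rho$ has non-zero class iff $\rho_n\to+\infty$ and $q_n-\rho_n\to+\infty$. So it suffices to assume that $\overline\rho$ has zero class, i.e. that $(\overline\rho_n)$ is eventually constant or $\liminf_n(q_n-\overline\rho_n)<+\infty$, and to deduce that $\rho$ itself falls into one of the three alternatives.

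First I would turn the defining formula $\overline\rho_n=\Psi_n(q_{\Lambda_\rho(n)+1}-2-\rho_{\Lambda_\rho(n)+1})$ of Proposition \ref{interceptproduit} into a closed form at the indices $M\in\mathrm{Supp}(\rho)$, where $\Lambda_\rho(M)=M$. Using $\rho_{M+1}=b_{M+1}q_M+\rho_M$ (Proposition \ref{inegalitesupport}) and reducing modulo $q_M$ gives the dichotomy $\overline\rho_M=q_{M-1}-2-\rho_M$ when $\rho_M\leq q_{M-1}-2$, and $\overline\rho_M=q_M+q_{M-1}-2-\rho_M$ when $\rho_M\geq q_{M-1}-1$; for $n\leq M=\Lambda_\rho(n)$ outside the support, coherence of the $\alpha$-number gives $\overline\rho_n=\overline\rho_M \bmod q_n$. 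In the first regime $q_M-\overline\rho_M\geq q_M-q_{M-1}\to+\infty$ and in the second $\overline\rho_M\geq q_{M-1}-1\to+\infty$, so on $\mathrm{Supp}(\rho)$ both $\overline\rho_M$ and $q_M-\overline\rho_M$ can fail to grow only through the borderline regime $\rho_M\approx q_{M-1}$.

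The heart of the argument is to show that such a failure of growth forces a $\sigma$-pattern in $\rho$. If $(\overline\rho_n)$ is eventually equal to a constant $c$, I would evaluate the dichotomy at two consecutive support indices $M<M'$, where $\rho_{M'}=\rho_M+b_{M+1}q_M$ since the intermediate coefficients vanish; the constancy $\overline\rho_M=\overline\rho_{M'}=c$ then yields $q_{M'-1}-q_{M-1}=b_{M+1}q_M$, and comparing with $q_{M+1}=a_{M+1}q_M+q_{M-1}$ together with $b_{M+1}\leq a_{M+1}$ and $q_{M'-1}\geq q_M$ forces $M'=M+2$ and $b_{M+1}=a_{M+1}$. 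Thus the large coefficients of $\rho$ are maximal and supported on a single parity class, which is precisely alternative 2) or 3). The case $\liminf_n(q_n-\overline\rho_n)<+\infty$ is meant to be handled by the same mechanism: selecting a fixed-parity subsequence on which $q_n-\overline\rho_n$ stays bounded (as in the proof of Proposition \ref{classenulle}), one shows the corresponding indices lie in the borderline regime where $\rho_n=q_{n-1}+O(1)$, and the same continuant identity collapses $\rho$ onto a maximal single-parity pattern (alternative 2)/3)) or onto an eventually-zero pattern (alternative 1)).

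The step I expect to be the main obstacle is controlling $\overline\rho_n$ at indices $n$ outside $\mathrm{Supp}(\rho)$, where only the reduced value $\overline\rho_n=\overline\rho_{\Lambda_\rho(n)}\bmod q_n$ is available and the closed dichotomy does not apply directly; the constant case avoids this (support is infinite and one may work entirely on it), but in the $\liminf$ case an index carrying a small value of $q_n-\overline\rho_n$ need not be a support index, and one must first push it, via the coherence relations and the inequalities of Proposition \ref{inegalitesupport}, onto a genuine support index of $\rho$ before invoking the continuant identity. The delicate borderline $\rho_{n-1}=q_{n-1}-1$ is exactly the configuration producing $\sigma_0$ and $\sigma_1$, so the bookkeeping must be arranged so that this configuration is detected as alternative 2)/3) rather than overlooked.
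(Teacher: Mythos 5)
Your strategy is sound and your key computation is correct, but you finish the proof by a genuinely different mechanism than the paper. Both arguments start identically: assume $\overline{\rho}$ has zero class, invoke Proposition \ref{classenulle} to get that one of $(\Psi_n(\overline{\rho}))_{n}$ or $(q_n-\Psi_n(\overline{\rho}))_{n}$ converges, and feed in the formula of Proposition \ref{interceptproduit}. From there the paper writes $q_{\Lambda_{\rho}(n)+1}-2-\rho_{\Lambda_{\rho}(n)+1}=c_{\Lambda_{\rho}(n)+1}q_{\Lambda_{\rho}(n)}+k$ and splits on whether the top coefficient $c_{\Lambda_{\rho}(n)+1}$ vanishes infinitely often: if it does, $\rho$ is immediately seen to have zero class; if not, the identity $(a_{\Lambda_{\rho}(n)+1}-c_{\Lambda_{\rho}(n)+1})q_{\Lambda_{\rho}(n)}+q_{\Lambda_{\rho}(n)-1}=\Psi_{\Lambda_{\rho}(n)+1}(\rho+k+2)$ is refuted because the left-hand side's Ostrowski support escapes to infinity while the right-hand side is a truncation of a fixed $\alpha$-number (a step that leans on Proposition \ref{interceptincrementation}). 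You instead evaluate your closed dichotomy at two consecutive support indices $M<M'$ and extract the continuant identity $q_{M'-1}-q_{M-1}=b_{M+1}q_M$, which forces $M'=M+2$ and $b_{M+1}=a_{M+1}$ and hence exhibits $\rho$ explicitly as one of the maximal single-parity patterns of alternative 2)/3) of Proposition \ref{classenulle}. Your route is more explicit and self-contained (it identifies exactly which $\rho$ would have a zero-class complement, namely the zero-class ones) and avoids the support-escape argument; the paper's is shorter per sub-case but less transparent.

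Two points need tightening. First, the equation $q_{M'-1}-q_{M-1}=b_{M+1}q_M$ only comes out if $M$ and $M'$ sit in the \emph{same} branch of your dichotomy; mixing branches gives a different relation. This is saved by the growth estimates you state: in the eventually-constant case the branch $\overline{\rho}_M=q_M+q_{M-1}-2-\rho_M$ forces $\overline{\rho}_M\geq q_{M-1}-1\to+\infty$ and is excluded, and in the other case the first branch forces $q_M-\overline{\rho}_M\to+\infty$ and is excluded; you should say explicitly that one branch is eliminated uniformly in each case before writing the two-index equation. Second, the ``main obstacle'' you flag about indices outside $\textup{Supp}(\rho)$ is not actually there: Proposition \ref{classenulle} gives convergence of the full sequence $(q_n-\Psi_n(\overline{\rho}))_{n}$, not merely a finite liminf along some subsequence, so it converges in particular along the (infinite, since $\rho$ is not a natural integer) support of $\rho$, and you may work entirely on support indices in both cases. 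What genuinely remains to be done is to write out that second case instead of asserting it follows ``by the same mechanism''; as it happens, constancy of $q_M-\overline{\rho}_M$ in the second branch reduces to the same relation $\rho_{M'}-q_{M'-1}=\rho_M-q_{M-1}$ as before, so the completion is short.
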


\begin{proof}
We suppose by contradiction that $\overline{\rho}$ is equivalent to zero. Then one of the two sequences $(\Psi_n(\overline{\rho}))_{n\geq 1}$ or $(q_n-\Psi_n(\overline{\rho}))_{n\geq 1}$ converges according to proposition \ref{classenulle}.

In the case where $(\Psi_n(\overline{\rho}))_{n\geq 1}$ converges, this implies that $\overline{\rho}$ is a natural integer, and so there exists $k\geq 0$ and $N\geq 0$ such that $\Psi_n(q_{\Lambda_{\rho}(n)+1}-2-\Psi_{\Lambda_{\rho}(n)+1}(\rho))=k$ for all $n\geq N$ according to proposition \ref{interceptproduit}. For all $n \geq N$, there exists a coefficient $c_{n+1}$ with $0\leq c_{\Lambda_{\rho}(n)+1} \leq a_{\Lambda_{\rho}(n)+1}$ such that
\begin{center}
	$q_{\Lambda_{\rho}(n)+1}-2-\Psi_{\Lambda_{\rho}(n)+1}(\rho)=c_{\Lambda_\rho(n)+1}q_{\Lambda_\rho(n)} + k$.
\end{center}
according to definition \ref{interceptdef}.

If the coefficient $c_{\Lambda_\rho(n)+1}$ is zero for infinitely many $n\geq N$, then we have the equality
\begin{center}
	$\Psi_{\Lambda_{\rho}(n)+1}(\rho)=q_{\Lambda_{\rho}(n)+1}-2-k$
\end{center}
according to definition \ref{interceptdef}, for infinitely many $n\geq N$. In view of definition \ref{interceptdef}, definitions \ref{interceptequivalent} and proposition \ref{classenulle}, this implies that $\rho$ is equivalent to the zero $\alpha$-number, which is a contradiction with our hypothesis. In the case where infinitely many coefficients $c_{n+1}$ are non-zero, then since that for all $n\geq N$, we have :
\begin{center}
	$q_{\Lambda_{\rho}(n)+1}-2-\Psi_{\Lambda_{\rho}(n)+1}(\rho)=c_{\Lambda_{\rho}(n)+1}q_{\Lambda_{\rho}(n)}+k$
\end{center}
and since $\rho$ is not equivalent to the zero $\alpha$-number by assumption, for $n$ large enough we have :
\begin{align*}
	(a_{\Lambda_\rho(n)+1}-c_{\Lambda_\rho(n)+1})q_{\Lambda_\rho(n)}+q_{\Lambda_\rho(n)-1}&= \Psi_{\Lambda_\rho(n)+1}(\rho)+k+2\\&=\Psi_{\Lambda_\rho(n)+1}(\rho+k+2)
\end{align*}
according to proposition \ref{interceptincrementation}, the right-hand side of the above being written in the Ostrowski numeration system. But this is impossible because the sequence $(a_{\Lambda_\rho(n)+1}-c_{\Lambda_\rho(n)+1})q_{\Lambda_\rho(n)}+q_{\Lambda_\rho(n)-1}$ does not define an $\alpha$-number asymptotically, since the supports in the Ostrowski numeration system of these integers have a minimum that tends towards infinity with $n$.

In the case where $(q_n-\Psi_n(\overline{\rho}))_{n\geq 1}$ converges, meaning that $c_\alpha$ is a suffix of $T^{\overline{\rho}}(c_\alpha)$, then there exists $k\geq 0$ and $N\geq 0$ such that we have the alternative :
\begin{center}
	$\Psi_n(\overline{\rho})=q_{2\left\lfloor \frac{n}{2}\right\rfloor}-1-k$
\end{center}
for all $n\geq N$, or
\begin{center}
	$\Psi_n(\overline{\rho})=q_{2\left\lfloor \frac{n}{2}\right\rfloor+1}-1-k$
\end{center}
for all $n\geq N$. We can assume without loss of generality that we are in the first case, and so :
\begin{center}
	$\Psi_{n}(\overline{\rho})= q_{2\left\lfloor \frac{n}{2}\right\rfloor}-1-k=\Psi_{n}(q_{\Lambda_{\rho}(n)+1}-2-\Psi_{\Lambda_{\rho}(n)+1}(\rho))$
\end{center}
for some $k\geq 0$ and all $n\geq 1$ large enough, according to proposition \ref{interceptmoinsun}. As before we write, with help of proposition \ref{interceptproduit},
\begin{center}
	$\Psi_{\Lambda_{\rho}(n)}(\overline{\rho})=q_{\Lambda_{\rho}(n)+1}-2 -\Psi_{\Lambda_{\rho}(n)+1}(\rho)-c_{\Lambda_{\rho}(n)+1}q_{\Lambda_{\rho}(n)}$
\end{center}
leading to the equality
\begin{center}
	$ (a_{\Lambda_{\rho}(n)+1} -c_{\Lambda_{\rho}(n)+1})q_{\Lambda_{\rho}(n)}+q_{\Lambda_{\rho}(n)-1}+k=q_{2\left\lfloor \frac{n}{2}\right\rfloor}+1+\Psi_{\Lambda_{\rho}(n)+1}(\rho)$.
\end{center}

If $c_{\Lambda_{\rho}(n)+1}=0$ infinitely many times with $n$, then for such $n$ we have :
\begin{center}
	$q_{\Lambda_{\rho}(n)+1}-q_{2\left\lfloor \frac{n}{2}\right\rfloor}+k=\Psi_{\Lambda_{\rho}(n)+1}(\rho+1)$
\end{center}
which is a contradiction since the left-hand side does not define an $\alpha$-number according to definition \ref{interceptdef}. Indeed, if $M$ is an integer such that $k< q_M$, then the expansion of these integer in the Ostrowski numeration system have no support within the integer interval $[M, 2\left\lfloor \frac{n}{2}\right\rfloor-1[$, although $n$ can be chosen arbitrary large. If $c_{\Lambda_{\rho}(n)+1}\neq 0$ infinitely many times with $n$, then 
\begin{center}
	 $(a_{\Lambda_{\rho}(n)+1} -c_{\Lambda_{\rho}(n)+1})q_{\Lambda_{\rho}(n)}+q_{\Lambda_{\rho}(n)-1}-q_{2\left\lfloor \frac{n}{2}\right\rfloor}+k= \Psi_{\Lambda_{\rho}(n)+1}(\rho+1)$
\end{center}
and the left-hand side is an integer with an expansion in the Ostrowski numeration system having a support within the integer interval $[M, 2\left\lfloor \frac{n}{2}\right\rfloor[$ and we conclude as before.

\end{proof}

\begin{theo}\label{prefixesuffixesturmien}
The application $\rho \longmapsto \overline{\rho}$, from the set of $\alpha$-numbers with non-zero equivalence class to itself is an involution, and we have the formula
\begin{center}
	$T^\rho(c_\alpha)=\widetilde{\mathbb{P}}_{\overline{\rho}}(c_\alpha)$.
\end{center}
Moreover, the bi-infinite word
\begin{center}
	$\displaystyle \widetilde{T^{\overline{\rho}}(c_\alpha)}\cdot T^\rho(c_\alpha)$
\end{center}
is a sturmian orbit.
\end{theo}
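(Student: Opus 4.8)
The plan is to establish the factorisation formula $T^\rho(c_\alpha)=\widetilde{\mathbb{P}}_{\overline{\rho}}(c_\alpha)$ first, to deduce the involution from it for free, and to treat the bi-infinite statement last by a gluing argument at the junction of the two halves. First I would record the standing reductions. Since $\rho$ has non-zero equivalence class, $\textup{Supp}(\rho)$ is infinite and, by proposition \ref{classenulle}, both $(\rho_n)_{n\geq1}$ and $(q_n-\rho_n)_{n\geq1}$ tend to $+\infty$; in particular $\rho$ is neither a natural integer nor one of $\sigma_0,\sigma_1$, since those have zero class. By lemma \ref{complementairenoyau} the same holds for $\overline{\rho}$, so proposition \ref{interceptproduit} applies to $\overline{\rho}$ and yields $\widetilde{\mathbb{P}}_{\overline{\rho}}(c_\alpha)=T^{\overline{\overline{\rho}}}(c_\alpha)$. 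Through the bijection of theorem \ref{interceptsturmien}, the formula $T^\rho(c_\alpha)=\widetilde{\mathbb{P}}_{\overline{\rho}}(c_\alpha)$ is therefore equivalent to $\overline{\overline{\rho}}=\rho$, so it suffices to prove the formula and the involution comes along automatically.

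To prove the formula I would exploit the palindromy of the central prefixes. Fix $N\in\textup{Supp}(\rho)$; corollary \ref{prefixesuffixefini} reads $s_{N+1}^{--}=\mathbb{P}_{q_{N+1}-2-\rho_{N+1}}(c_\alpha)\cdot\widetilde{\mathbb{P}}_{\rho_{N+1}}(c_\alpha)$, and reversing this palindrome gives the dual factorisation $s_{N+1}^{--}=\mathbb{P}_{\rho_{N+1}}(c_\alpha)\cdot\widetilde{\mathbb{P}}_{q_{N+1}-2-\rho_{N+1}}(c_\alpha)$. The coherence of the complement recorded immediately after proposition \ref{interceptproduit} shows that, for $N$ ranging over $\textup{Supp}(\rho)$, the low-order Ostrowski digits of $q_{N+1}-2-\rho_{N+1}$ are exactly those of $\overline{\rho}$; hence $\widetilde{\mathbb{P}}_{q_{N+1}-2-\rho_{N+1}}(c_\alpha)$ is an initial segment of $\widetilde{\mathbb{P}}_{\overline{\rho}}(c_\alpha)$, of length $q_{N+1}-2-\rho_{N+1}\to+\infty$. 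On the other hand this same word is a suffix of the prefix $s_{N+1}^{--}$ of $c_\alpha$, hence coincides with the prefix of $T^{\rho_{N+1}}(c_\alpha)$ of that length; and by proposition \ref{prefixecommun} the word $T^{\rho_{N+1}}(c_\alpha)$ agrees with $T^\rho(c_\alpha)$ on a prefix of length at least $q_{N+1}-1$. Letting $N\to+\infty$ in $\textup{Supp}(\rho)$, the words $\widetilde{\mathbb{P}}_{\overline{\rho}}(c_\alpha)$ and $T^\rho(c_\alpha)$ share prefixes of unbounded length, so they are equal, which proves the formula and, with the previous reduction, the involution $\overline{\overline{\rho}}=\rho$. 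The one genuinely delicate point of the whole argument is the digit-stabilisation step just used: verifying that the Ostrowski digits of $q_{N+1}-2-\rho_{N+1}$ below any fixed level stabilise to the digits of $\overline{\rho}$ along $\textup{Supp}(\rho)$, which is precisely the coherence of the definition of the complement together with $\overline{\rho}_n\to+\infty$; the remaining manipulations are routine bookkeeping with $\Lambda_\rho$ and the monotone sequence $(\lambda_n)$.

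Finally, for the bi-infinite word I would glue at the junction. By proposition \ref{interceptproduit}, $T^{\overline{\rho}}(c_\alpha)=\widetilde{\mathbb{P}}_{\rho}(c_\alpha)$, whose reversal $\widetilde{T^{\overline{\rho}}(c_\alpha)}$ is the left-infinite word whose suffix of length $\rho_{N+1}$ equals $\widetilde{\widetilde{\mathbb{P}}_{\rho_{N+1}}(c_\alpha)}=\mathbb{P}_{\rho_{N+1}}(c_\alpha)$, an honest prefix of $c_\alpha$. Concatenating with the right half $T^\rho(c_\alpha)$, which agrees with $T^{\rho_{N+1}}(c_\alpha)$ on a long prefix by proposition \ref{prefixecommun}, the window of the bi-infinite word centred at the junction reads $\mathbb{P}_{\rho_{N+1}}(c_\alpha)\cdot T^{\rho_{N+1}}(c_\alpha)=c_\alpha$ on an initial segment whose length grows to infinity with $N$ (using $\rho_{N+1}\to+\infty$). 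Thus every finite factor meeting the junction is a factor of $c_\alpha$; factors contained in the right half are factors of the sturmian word $T^\rho(c_\alpha)$, and factors contained in the left half are reversals of factors of $\widetilde{\mathbb{P}}_{\rho}(c_\alpha)$, hence again factors of $c_\alpha$ by the reversal-stability of the sturmian factor set. As all finite factors are factors of $c_\alpha$, the bi-infinite word lies in the sturmian subshift of slope $\alpha$, that is, it is a sturmian orbit.
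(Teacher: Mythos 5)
Your proposal is correct and follows essentially the same route as the paper's proof: both rest on the palindromic factorisation of $s_{N+1}^{--}$ from corollary \ref{prefixesuffixefini}, proposition \ref{prefixecommun}, the digit-stabilisation of the complement from proposition \ref{interceptproduit}, and lemma \ref{complementairenoyau}, with the order of the three claims merely permuted. The only point to adjust is your assertion that all of $\widetilde{\mathbb{P}}_{q_{N+1}-2-\rho_{N+1}}(c_\alpha)$ is an initial segment of $\widetilde{\mathbb{P}}_{\overline{\rho}}(c_\alpha)$: only its truncation to length $\Psi_N(\overline{\rho})$ is guaranteed (the digit of $q_{N+1}-2-\rho_{N+1}$ in position $N$ need not agree with that of $\overline{\rho}$), but since $\Psi_N(\overline{\rho})\to+\infty$ the argument goes through exactly as in the paper, which performs this same truncation.
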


\begin{proof}
Let $\rho$ be an $\alpha$-number with non-zero equivalence class. For $N\geq 1$, according to corollary \ref{prefixesuffixefini}, we have the factorisation
\begin{center}
	$s_{\Lambda_\rho(N)+1}^{--}=\mathbb{P}_{q_{\Lambda_{\rho}(N)+1}-2-\Psi_{\Lambda_{\rho}(N)+1}(\rho)}(c_\alpha)\widetilde{\mathbb{P}}_{\Psi_{\Lambda_{\rho}(N)+1}(\rho)}(c_\alpha)$
\end{center}
and since the word
\begin{center}
	$\mathbb{P}_{\Psi_n(q_{\Lambda_{\rho}(N)+1}-2-\Psi_{\Lambda_{\rho}(N)+1}(\rho))}(c_\alpha)$
\end{center}
is a suffix of $\mathbb{P}_{q_{\Lambda_{\rho}(N)+1}-2-\Psi_{\Lambda_{\rho}(N)+1}(\rho)}(c_\alpha)$, we deduce that for all $N\geq 1$, the word
\begin{center}
	$\mathbb{P}_{\Psi_N(q_{\Lambda_{\rho}(N)+1}-2-\Psi_{\Lambda_{\rho}(N)+1}(\rho))}(c_\alpha)\widetilde{\mathbb{P}}_{\Psi_{\Lambda_{\rho}(N)+1}(\rho)}(c_\alpha)$
\end{center}
is a factor of $c_\alpha$. Since the sequence $(\Psi_n(q_{\Lambda_{\rho}(n)+1}-2-\Psi_{\Lambda_{\rho}(N)+1}(\rho)))_{n\geq 1}$ defines an $\alpha$-number with non-zero equivalence class from proposition \ref{interceptproduit} and lemma \ref{complementairenoyau}, we deduce that the bi-infinite word
\begin{center}
	$\displaystyle \widetilde{\widetilde{\mathbb{P}}_{\overline{\rho}}(c_\alpha)}\cdot \widetilde{\mathbb{P}}_{\rho}(c_\alpha)$
\end{center}
is a sturmian orbit.

On the other hand, for $N\geq 1$, we know that the two words $T^{\Psi_{\Lambda_{\rho}(N)+1}(\rho)}(c_\alpha)$ and $T^{\rho}(c_\alpha)$ share the same prefix of length $q_{\Lambda_{\rho}(N)+2}+q_{\Lambda_{\rho}(N)+1}-2 -\Psi_{\Lambda_{\rho}(N)+2}(\rho) \geq q_{\Lambda_{\rho}(N)+1}-2 -\Psi_{\Lambda_{\rho}(N)+1}(\rho)$ according to proposition \ref{prefixecommun}. Since
\begin{center}
	$\mathbb{P}_{q_{\Lambda_{\rho}(N)+1}-2 -\Psi_{\Lambda_{\rho}(N)+1}(\rho)}(T^{\Psi_{\Lambda_{\rho}(N)+1}(\rho)}(c_\alpha))=\widetilde{\mathbb{P}}_{q_{\Lambda_{\rho}(N)+1}-2 -\Psi_{\Lambda_{\rho}(N)+1}(\rho)}(c_\alpha)$
\end{center}
from corollary \ref{prefixesuffixefini}, we get that the word
\begin{center}
	$\widetilde{\mathbb{P}}_{\Psi_{\Lambda_{\rho}(N)}(q_{\Lambda_{\rho}(N)+1}-2 -\Psi_{\Lambda_{\rho}(N)+1}(\rho))}(c_\alpha)$
\end{center}
is a prefix of $T^\rho(c_\alpha)$ from definition \ref{deftrho}, hence the relation
\begin{center}
	$T^\rho(c_\alpha)=\widetilde{\mathbb{P}}_{\overline{\rho}}(c_\alpha)$.
\end{center}
This relation implies that the correspondence $\rho \mapsto \overline{\rho}$ is an involution, according to theorem \ref{interceptsturmien}.
\end{proof}

\begin{coro}\label{complementairetheoreme}
Let $\rho=\sum_{i=0}^{+\infty}b_{i+1}q_i$ be an $\alpha$-number of the slope $\alpha$ with non-zero equivalence class. Then the three following $\alpha$-numbers are equal :
\begin{enumerate}
	\item the formal intercept associated to the unique sturmian word $y$ such that the bi-infinite word $\widetilde{y}\cdot T^{\rho}(c_\alpha)$ is a sturmian orbit,
	\item the formal intercept associated to the sturmian word
	\begin{center}
		$\displaystyle \widetilde{\mathbb{P}}_{\rho}(c_\alpha)=\prod_{i=0}^{+\infty}\widetilde{s_i}^{b_{i+1}}$,
	\end{center}
where $(s_n)_{n\geq -1}$ is the standard sequence with respect to the slope $\alpha$,
 \item the $\alpha$-number asymptotically defined by the sequence $\displaystyle \Psi_n(\overline{\rho})= \Psi_n(q_{\Lambda_{\rho}(n)+1}-2-\Psi_{\Lambda_{\rho}(n)+1}(\rho))$, where $\Lambda_{\rho}(n)=\min( \textup{Supp}(\rho)\cap [n, +\infty[)$ and the functions $\Psi_n : \mathcal{I}_\alpha \rightarrow [0,q_n[$ are the projections coming from the projective limit of definition \ref{interceptdef}.
\end{enumerate}
\end{coro}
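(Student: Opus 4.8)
The plan is to recognise that this corollary is almost entirely a repackaging of Theorem \ref{prefixesuffixesturmien} together with Proposition \ref{interceptproduit}, so that the only genuinely new content is the uniqueness claim buried in item 1. I would organise the proof around identifying each of the three objects with the complement $\overline{\rho}$.

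First I would dispose of the equality of items 2 and 3, which is immediate: Proposition \ref{interceptproduit} states precisely that the formal intercept of the infinite product $\widetilde{\mathbb{P}}_{\rho}(c_\alpha)=\prod_{i\geq 0}\widetilde{s_i}^{b_{i+1}}$ is the $\alpha$-number $\overline{\rho}$ asymptotically defined by $\Psi_n(\overline{\rho})=\Psi_n(q_{\Lambda_{\rho}(n)+1}-2-\Psi_{\Lambda_{\rho}(n)+1}(\rho))$, and this is verbatim item 3. Since $\rho$ has non-zero equivalence class, Lemma \ref{complementairenoyau} guarantees that $\overline{\rho}$ again has non-zero equivalence class, so every object in the statement is well defined and the involution $\rho\mapsto\overline{\rho}$ of Theorem \ref{prefixesuffixesturmien} applies.

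Next I would treat existence for item 1 by invoking the last clause of Theorem \ref{prefixesuffixesturmien}: the bi-infinite word $\widetilde{T^{\overline{\rho}}(c_\alpha)}\cdot T^{\rho}(c_\alpha)$ is a sturmian orbit. Hence $y=T^{\overline{\rho}}(c_\alpha)$ is a sturmian word meeting the condition of item 1, and by the bijection of Theorem \ref{interceptsturmien} its formal intercept is exactly $\overline{\rho}$. Thus, once uniqueness of $y$ is established, item 1 coincides with item 3 as well, and the three $\alpha$-numbers are equal.

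The step I expect to be the crux is the uniqueness of $y$. Writing $x=T^{\rho}(c_\alpha)$, the bi-infinite word $\ldots y_2y_1y_0\,x_0x_1\ldots$ is a sturmian orbit if and only if each right-infinite word $z_k:=y_{k-1}\cdots y_0\,x_0x_1\cdots$ (with $z_0=x$) is sturmian of slope $\alpha$, so that $y$ is constructed by successive left-extensions. By property 4 of characteristic words at least one left-extension is always available, and by property 1 a sturmian word admits two distinct left-extensions precisely when it equals $c_\alpha$. Consequently each letter $y_k$ is forced unless $z_k=c_\alpha$ for some $k$; but $z_k=c_\alpha$ would give $x=T^k(c_\alpha)$, making $\rho$ a natural integer and hence of equivalence class zero by Proposition \ref{classenulle}, contrary to hypothesis. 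Therefore the left-extension is unique at every stage, $y$ is determined letter by letter, and the uniqueness follows. The whole difficulty is isolated in this observation that the non-zero equivalence class hypothesis exactly excludes the branching case $z_k=c_\alpha$; the remaining identifications are direct readings of the cited statements.
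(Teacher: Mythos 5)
Your proposal is correct and follows exactly the route the paper intends: the corollary is stated without proof as a direct consequence of Theorem \ref{prefixesuffixesturmien} (which gives $T^{\rho}(c_\alpha)=\widetilde{\mathbb{P}}_{\overline{\rho}}(c_\alpha)$, the involution, and the sturmian orbit $\widetilde{T^{\overline{\rho}}(c_\alpha)}\cdot T^{\rho}(c_\alpha)$) together with Proposition \ref{interceptproduit} identifying items 2 and 3. Your explicit uniqueness argument for the word $y$ in item 1 --- forced letter-by-letter left extension, with branching only at $c_\alpha$, which is excluded since $z_k=c_\alpha$ would make $\rho$ a natural integer and hence of zero equivalence class --- is a correct and welcome filling-in of a step the paper leaves implicit.
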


\begin{coro}\label{factorisationsturmien}
Let $x$ be a sturmian word with non-zero equivalence class. Then there exists a unique $\alpha$-number $\rho$ with non-zero equivalence class such that :
\begin{center}
	$x=\widetilde{\mathbb{P}}_{\rho}(c_\alpha)$.
\end{center}
\end{coro}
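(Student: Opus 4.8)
The plan is to derive this corollary directly from Theorem \ref{prefixesuffixesturmien} together with the bijection between sturmian words and their formal intercepts established in Theorem \ref{interceptsturmien}; essentially all the analytic work has already been carried out in those results, and what remains is careful bookkeeping around the involution $\rho \mapsto \overline{\rho}$.

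First I would invoke Theorem \ref{interceptsturmien} to write $x = T^{\gamma}(c_\alpha)$ for a unique $\alpha$-number $\gamma$, the formal intercept of $x$. Since by hypothesis $x$ has non-zero equivalence class, so does $\gamma$, and hence its complement $\overline{\gamma}$ is defined and, by Theorem \ref{prefixesuffixesturmien} (which itself rests on Lemma \ref{complementairenoyau}), again lies in the set of $\alpha$-numbers with non-zero equivalence class. Applying the factorisation formula $T^{\rho}(c_\alpha)=\widetilde{\mathbb{P}}_{\overline{\rho}}(c_\alpha)$ of Theorem \ref{prefixesuffixesturmien} with $\gamma$ in the role of $\rho$ gives $x = T^{\gamma}(c_\alpha) = \widetilde{\mathbb{P}}_{\overline{\gamma}}(c_\alpha)$, so that setting $\rho := \overline{\gamma}$ produces the desired factorisation with $\rho$ of non-zero equivalence class, settling existence.

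For uniqueness, suppose $x = \widetilde{\mathbb{P}}_{\rho'}(c_\alpha)$ for some $\alpha$-number $\rho'$ with non-zero equivalence class. Using that $\rho \mapsto \overline{\rho}$ is an involution (Theorem \ref{prefixesuffixesturmien}), I would apply the same formula with $\overline{\rho'}$ in the role of $\rho$ to obtain $T^{\overline{\rho'}}(c_\alpha) = \widetilde{\mathbb{P}}_{\overline{\overline{\rho'}}}(c_\alpha) = \widetilde{\mathbb{P}}_{\rho'}(c_\alpha) = x$. Thus $\overline{\rho'}$ is a formal intercept of $x$, and the uniqueness clause of Theorem \ref{interceptsturmien} forces $\overline{\rho'} = \gamma$; applying the involution once more yields $\rho' = \overline{\gamma} = \rho$.

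The only genuine subtlety, and hence the step I would be most careful with, is checking that the non-zero equivalence-class hypothesis is correctly transported through the complement map in both directions. This is precisely guaranteed by the assertion in Theorem \ref{prefixesuffixesturmien} that $\rho \mapsto \overline{\rho}$ restricts to an involution of the set of $\alpha$-numbers with non-zero equivalence class, so no further estimate on Rauzy graphs or on the quantities $\lambda_n$ is needed; the corollary is a formal consequence of the two cited theorems.
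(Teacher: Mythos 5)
Your proposal is correct and follows exactly the route the paper intends: the corollary is stated without proof as an immediate consequence of Theorem \ref{prefixesuffixesturmien} (the involution property and the formula $T^{\rho}(c_\alpha)=\widetilde{\mathbb{P}}_{\overline{\rho}}(c_\alpha)$) combined with the bijection of Theorem \ref{interceptsturmien}, which is precisely the bookkeeping you carry out for both existence and uniqueness.
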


\begin{prop}\label{produitplusun}
Let $\rho$ be a $\alpha$-number with non-zero equivalence class. Then we have the formula :
\begin{center}
	$T(\widetilde{\mathbb{P}}_{\rho+1}(c_\alpha))=\widetilde{\mathbb{P}}_{\rho}(c_\alpha)$.
\end{center}
In particular, for all $k\geq 0$ we have :
\begin{center}
	$\overline{\rho+k}+k=\overline{\rho}$.
\end{center}
In the case where $\rho$ has zero equivalence class, but is not a natural integer, then the first formula is valid if $\rho+1\neq 0$, and the second is valid if $\rho+k$ is not a natural integer.
\end{prop}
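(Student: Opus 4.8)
The plan is to reduce the entire statement to the single identity $\overline{\rho+1}+1=\overline{\rho}$ and then to reach the general case by induction. First I would record that, by proposition \ref{interceptproduit}, for any $\alpha$-number $\gamma$ with non-zero equivalence class one has $\widetilde{\mathbb{P}}_{\gamma}(c_\alpha)=T^{\overline{\gamma}}(c_\alpha)$. Hence $T(\widetilde{\mathbb{P}}_{\rho+1}(c_\alpha))=T(T^{\overline{\rho+1}}(c_\alpha))$ is the sturmian word of formal intercept $\overline{\rho+1}+1$ by definition \ref{interceptequivalent}, while $\widetilde{\mathbb{P}}_{\rho}(c_\alpha)=T^{\overline{\rho}}(c_\alpha)$ has formal intercept $\overline{\rho}$. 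By the bijection of theorem \ref{interceptsturmien}, the first displayed formula is therefore \emph{equivalent} to $\overline{\rho+1}+1=\overline{\rho}$, which is the case $k=1$ of the second formula. The general case then follows by induction: since $\rho+k\equiv\rho$ has the same non-zero equivalence class, the case $k=1$ applied to $\rho+k$ gives $\overline{\rho+k+1}+1=\overline{\rho+k}$, and combining this with the inductive hypothesis $\overline{\rho+k}+k=\overline{\rho}$ and the associativity $(\rho+k)+1=\rho+(k+1)$ (read off from definition \ref{interceptequivalent}) yields $\overline{\rho+(k+1)}+(k+1)=\overline{\rho}$.

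To prove $\overline{\rho+1}+1=\overline{\rho}$ I would exploit the bi-infinite orbits supplied by theorem \ref{prefixesuffixesturmien}. Writing $x=T^{\rho}(c_\alpha)$ with first letter $x_0$, both
\[
W_\rho=\widetilde{T^{\overline{\rho}}(c_\alpha)}\cdot T^{\rho}(c_\alpha)\qquad\text{and}\qquad W_{\rho+1}=\widetilde{T^{\overline{\rho+1}}(c_\alpha)}\cdot T^{\rho+1}(c_\alpha)
\]
are sturmian orbits. As a sturmian orbit is shift-invariant, $T(W_\rho)$ is again a sturmian orbit, and a direct inspection of indices shows that its right half is $T(x)=T^{\rho+1}(c_\alpha)$, which is exactly the right half of $W_{\rho+1}$, whereas $T(W_\rho)=\widetilde{x_0\cdot T^{\overline{\rho}}(c_\alpha)}\cdot T^{\rho+1}(c_\alpha)$.

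The heart of the matter is then the uniqueness of the left extension of a bi-infinite sturmian orbit by its right half. Building the left half one letter at a time, the letter placed in front of a right-infinite tail $u$ is free precisely when both $0u$ and $1u$ are sturmian, that is, by properties $1)$ and $4)$ of the characteristic word together with the uniqueness of $c_\alpha$, precisely when $u=c_\alpha$. Now, while extending $W_{\rho+1}$ leftwards, every tail one meets is a finite word followed by $z:=T^{\rho+1}(c_\alpha)$, and such a tail can equal $c_\alpha$ only if $z=T^{k}(c_\alpha)$ for some $k\geq 0$, i.e. only if $\rho+1$ is a natural integer; this is excluded here, because $\rho+1\equiv\rho$ has non-zero equivalence class by proposition \ref{classenulle}. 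Hence the left extension is unique, so $T(W_\rho)=W_{\rho+1}$, which forces $x_0\cdot T^{\overline{\rho}}(c_\alpha)=T^{\overline{\rho+1}}(c_\alpha)$; applying $T$ and comparing formal intercepts through theorem \ref{interceptsturmien} gives $\overline{\rho+1}+1=\overline{\rho}$, as wanted.

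Finally, for the degenerate case in which $\rho$ has zero equivalence class but is not a natural integer, the object $\overline{\rho}$ is no longer furnished by proposition \ref{interceptproduit} (these $\rho$ are exactly the intercepts equivalent to $\sigma_0$ or $\sigma_1$ having $c_\alpha$ as a strict suffix). Here I would argue directly from the infinite product of definition \ref{defprho}, which is still well defined because both $\rho$ and $\rho+1$ are non-integer, and observe that the uniqueness principle of the previous paragraph remains valid as soon as $\rho+1$ is not a natural integer; this is precisely what the hypothesis $\rho+1\neq 0$ guarantees for the first formula (resp. "$\rho+k$ is not a natural integer" for the second). Combining this with the explicit Ostrowski expansions of $\sigma_0$, $\sigma_1$ and their translates recorded in proposition \ref{interceptmoinsun} and the remark following it closes the case by a direct verification. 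I expect the main obstacle to be exactly this uniqueness of the left extension and its controlled breakdown on the singular intercepts (those equivalent to $0$), which is the structural reason the statement must exclude $\rho+1=0$ and natural-integer values of $\rho+k$.
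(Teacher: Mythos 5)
Your proof is correct and follows essentially the same route as the paper: both rest on identifying the two sturmian orbits $\widetilde{T^{\overline{\rho}}(c_\alpha)}\cdot T^{\rho}(c_\alpha)$ and $\widetilde{T^{\overline{\rho+1}}(c_\alpha)}\cdot T^{\rho+1}(c_\alpha)$ furnished by theorem \ref{prefixesuffixesturmien}. You usefully make explicit the uniqueness of the left extension (no tail can equal $c_\alpha$ because $\rho+1$ is not a natural integer), which the paper leaves implicit in corollary \ref{complementairetheoreme}, and you obtain the case of general $k$ by induction from $k=1$ where the paper instead invokes corollary \ref{factorisationcarac} and proposition \ref{interceptincrementation}; both variants are sound, though your treatment of the zero-equivalence-class case is, like the paper's, only sketched.
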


\begin{proof}
For the first part, it is an easy consequence of the fact that the two sturmian orbits
\begin{center}
	$	\displaystyle \widetilde{T^{\overline{\rho}}(c_\alpha)}\cdot T^\rho(c_\alpha)$ \quad and \quad $	\displaystyle \widetilde{T^{\overline{\rho+1}}(c_\alpha)}\cdot T^{\rho+1}(c_\alpha)$
\end{center}
are equal according to theorem \ref{prefixesuffixesturmien}.

For the second part, this is a consequence of the factorisation formulas obtained for the characteristic word in corollary \ref{factorisationcarac},  and the fact that if $\rho+k$ is not a natural integer, then $\Psi_N(\rho+k)=\Psi_N(\rho)+k$ for $N$ large enough according to proposition \ref{interceptincrementation}.
\end{proof}

\begin{prop}
Let $\sigma_0$ be the formal intercept associated by theorem \ref{interceptsturmien} to the word $0c_\alpha$, and let $\sigma_1$ be the formal intercept associated to the word $1c_\alpha$. Then we have the factorisations
\begin{align*}
	0c_\alpha=T^{\sigma_0}(c_\alpha)=\widetilde{\mathbb{P}}_{\sigma_1}(c_\alpha)
\end{align*}
and
\begin{align*}
	1c_\alpha=T^{\sigma_1}(c_\alpha)=\widetilde{\mathbb{P}}_{\sigma_0}(c_\alpha).
\end{align*}
\end{prop}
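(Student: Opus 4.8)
The plan is to read each displayed line as a trivial left-hand equality supplied by the definition of $\sigma_0,\sigma_1$, followed by a genuine right-hand equality that requires work. First I would record that $0c_\alpha=T^{\sigma_0}(c_\alpha)$ and $1c_\alpha=T^{\sigma_1}(c_\alpha)$ are exactly the content of Proposition \ref{interceptmoinsun}, so the whole task reduces to the two product identities $\widetilde{\mathbb{P}}_{\sigma_0}(c_\alpha)=1c_\alpha$ and $\widetilde{\mathbb{P}}_{\sigma_1}(c_\alpha)=0c_\alpha$ (note the crossing of the indices $0$ and $1$). Since $0c_\alpha$ and $1c_\alpha$ are both distinct from $c_\alpha=T^0(c_\alpha)$, neither $\sigma_0$ nor $\sigma_1$ is a natural integer, so both products are well defined in the sense of Definition \ref{defprho}, even though Proposition \ref{interceptproduit} explicitly forbids applying its complement formula to $\sigma_0$ and $\sigma_1$ themselves.

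Next I would bring in the words $10c_\alpha$ and $01c_\alpha$, with formal intercepts $\tau$ and $\tau'$. Applying the shift once gives $T(10c_\alpha)=0c_\alpha=T^{\sigma_0}(c_\alpha)$ and $T(01c_\alpha)=1c_\alpha=T^{\sigma_1}(c_\alpha)$, so by Definition \ref{interceptequivalent} one has $\tau+1=\sigma_0$ and $\tau'+1=\sigma_1$. Both $\tau$ and $\tau'$ have zero equivalence class, since $c_\alpha=T^2(10c_\alpha)=T^2(01c_\alpha)$ is a suffix of each, yet neither is a natural integer (otherwise $c_\alpha$ would be shift-periodic); and Corollary \ref{factorisationcarac}, which is precisely the statement that the complements of $\tau$ and $\tau'$ vanish, yields $\widetilde{\mathbb{P}}_{\tau}(c_\alpha)=\widetilde{\mathbb{P}}_{\tau'}(c_\alpha)=c_\alpha$.

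Then I would invoke Proposition \ref{produitplusun}. Its first formula, in the zero-equivalence-class regime, applies to $\tau$ because $\tau+1=\sigma_0\neq 0$, giving $T(\widetilde{\mathbb{P}}_{\sigma_0}(c_\alpha))=\widetilde{\mathbb{P}}_{\tau}(c_\alpha)=c_\alpha$, and symmetrically $T(\widetilde{\mathbb{P}}_{\sigma_1}(c_\alpha))=c_\alpha$. Hence each product is one of the two shift-preimages $0c_\alpha,1c_\alpha$ of $c_\alpha$, and it remains only to read off the first letter. Writing $\widetilde{\mathbb{P}}_{\rho}(c_\alpha)=\prod_i\widetilde{s_i}^{b_{i+1}}$, the first letter is that of the lowest-index block $\widetilde{s_j}$ with $b_{j+1}\neq 0$, namely the last letter of $s_j$. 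For $\sigma_0=\sum_i a_{2i+2}q_{2i+1}$ the lowest block is $\widetilde{s_1}$, and $s_1=s_0^{a_1-1}s_{-1}$ ends in $1$, so $\widetilde{\mathbb{P}}_{\sigma_0}(c_\alpha)$ begins with $1$ and equals $1c_\alpha$. For $\sigma_1=(a_1-1)+\sum_{i\geq 1}a_{2i+1}q_{2i}$ the lowest block is $\widetilde{s_0}=0$ when $a_1\geq 2$ and $\widetilde{s_2}$ (ending in $0$, $s_2$ having even index) when $a_1=1$; either way it begins with $0$, so $\widetilde{\mathbb{P}}_{\sigma_1}(c_\alpha)=0c_\alpha$.

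The main obstacle I anticipate is bookkeeping rather than conceptual: checking that the degenerate hypotheses of Proposition \ref{produitplusun} (zero equivalence class, $\sigma_0,\sigma_1\neq 0$, $\tau,\tau'$ not natural integers, and $10c_\alpha,01c_\alpha$ genuinely sturmian) really hold, and managing the low-index case split $a_1\geq 2$ versus $a_1=1$ when isolating the leading block. As an independent check one can bypass Proposition \ref{produitplusun} entirely: expanding $\widetilde{\mathbb{P}}_{\sigma_0}(c_\alpha)$ from its Ostrowski coefficients and comparing it with the product for $c_\alpha$ in Corollary \ref{factorisationcarac}, the two expansions share a common tail, and the identity $\widetilde{s_1}=1\widetilde{s_0}^{a_1-1}$ then exhibits $\widetilde{\mathbb{P}}_{\sigma_0}(c_\alpha)=1\cdot c_\alpha$ directly, with the symmetric computation giving $\widetilde{\mathbb{P}}_{\sigma_1}(c_\alpha)=0\cdot c_\alpha$.
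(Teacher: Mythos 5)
Your proposal is correct and follows essentially the same route as the paper: introduce the intercepts of $10c_\alpha$ and $01c_\alpha$, read Corollary \ref{factorisationcarac} as saying their associated products equal $c_\alpha$, apply Proposition \ref{produitplusun} to shift up to $\sigma_0,\sigma_1$, and finish by identifying the first letter of each product. Your extra care with the degenerate hypotheses of Proposition \ref{produitplusun} and the case split on $a_1$ only makes explicit what the paper leaves implicit.
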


\begin{proof}
Let $\gamma_0$ be the formal intercept associated to the word $10c_\alpha$, and let $\gamma_1$ be the formal intercept associated to the word $01c_\alpha$. Corollary \ref{factorisationcarac} may be rewritten as the equalities :
\begin{center}
	$c_\alpha = \widetilde{\mathbb{P}}_{\gamma_0}(c_\alpha)=\widetilde{\mathbb{P}}_{\gamma_1}(c_\alpha) $
\end{center}
and given that $\gamma_0+1=\sigma_0$ and $\gamma_1+1=\sigma_1$, with proposition \ref{produitplusun}, the words
\begin{center}
	$\widetilde{\mathbb{P}}_{\sigma_0}(c_\alpha)$ \quad and \quad $\widetilde{\mathbb{P}}_{\sigma_1}(c_\alpha)$,
\end{center}
when deprived from their first letters, are equal to the characteristic word. We conclude as in proposition \ref{interceptmoinsun} by seeing that the word $\widetilde{\mathbb{P}}_{\sigma_1}(c_\alpha)$ starts with the letter 0, and the word $\widetilde{\mathbb{P}}_{\sigma_0}(c_\alpha)$ starts with the letter 1.
\end{proof}

\begin{prop}\label{factorisationsuffixcarac}
Let $x$ be a sturmian word of slope $\alpha$. The following statements are equivalent :
\begin{enumerate}[i)]
	\item $x$ is a suffix of the characteristic word,
	\item there are exactly two distincts $\alpha$-numbers $\rho$ and $\gamma$ such that
	\begin{align*}
		x&=\widetilde{\mathbb{P}}_{\rho}(c_\alpha)\\&=\widetilde{\mathbb{P}}_{\gamma}(c_\alpha).
	\end{align*}
\end{enumerate}
\end{prop}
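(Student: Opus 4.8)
The plan is to count, for each sturmian word $x$, the non-natural-integer $\alpha$-numbers $\rho$ with $x=\widetilde{\mathbb{P}}_\rho(c_\alpha)$ (recall that $\widetilde{\mathbb{P}}_\rho$ is only defined when $\rho$ is not a natural integer, by definition \ref{defprho}), and to show that this count equals $2$ exactly when $x$ is a suffix of $c_\alpha$. First I would split according to the equivalence class. If $\rho$ has non-zero equivalence class, then $\widetilde{\mathbb{P}}_\rho(c_\alpha)=T^{\overline{\rho}}(c_\alpha)$ with $\overline{\rho}$ again of non-zero equivalence class by theorem \ref{prefixesuffixesturmien} and lemma \ref{complementairenoyau}, so $\widetilde{\mathbb{P}}_\rho(c_\alpha)$ has non-zero equivalence class; conversely corollary \ref{factorisationsturmien} says this restriction is a bijection onto the sturmian words of non-zero equivalence class. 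Consequently a word of non-zero equivalence class admits exactly one representation $\widetilde{\mathbb{P}}_\rho(c_\alpha)$ and is not a suffix of $c_\alpha$ (suffixes of $c_\alpha$ are the natural integers, which have zero equivalence class), so both $(i)$ and $(ii)$ fail for it. This reduces the proposition to the case where $x$ has zero equivalence class.

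Next I would enumerate the zero-equivalence-class words using proposition \ref{classenulle} together with the remark following proposition \ref{interceptmoinsun}: they are the suffixes $T^k(c_\alpha)$, $k\geq 0$ (the natural integers), and the words having $c_\alpha$ as a strict suffix. Since $c_\alpha$ is characteristic, both $0c_\alpha$ and $1c_\alpha$ are sturmian, while every non-characteristic sturmian word has a single sturmian predecessor; hence the words with $c_\alpha$ as a strict suffix form two backward rays issued from $0c_\alpha=T^{\sigma_0}(c_\alpha)$ and $1c_\alpha=T^{\sigma_1}(c_\alpha)$, whose formal intercepts are obtained from $\sigma_0$ and $\sigma_1$ by repeatedly passing to the unique $\mu$ with $\mu+1$ equal to the shallower intercept, and these are exactly the non-natural-integer $\alpha$-numbers of zero equivalence class.

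The core computation is then to evaluate $\widetilde{\mathbb{P}}$ along these two rays. Starting from the preceding proposition, which gives $\widetilde{\mathbb{P}}_{\sigma_0}(c_\alpha)=1c_\alpha$ and $\widetilde{\mathbb{P}}_{\sigma_1}(c_\alpha)=0c_\alpha$, I would apply proposition \ref{produitplusun} in the form $\widetilde{\mathbb{P}}_\mu(c_\alpha)=T(\widetilde{\mathbb{P}}_{\mu+1}(c_\alpha))$ to the successive intercepts on each ray, checking at each step that the shifted intercept is a non-natural-integer $\alpha$-number distinct from $0$ (the only hypothesis needed). An immediate induction then yields, for the $(k+1)$-th intercept $\mu^0_{k+1}$ of the first ray and $\mu^1_{k+1}$ of the second, the equalities $\widetilde{\mathbb{P}}_{\mu^0_{k+1}}(c_\alpha)=\widetilde{\mathbb{P}}_{\mu^1_{k+1}}(c_\alpha)=T^{k}(c_\alpha)$ for all $k\geq 0$. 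Reading off this table: each suffix $T^k(c_\alpha)$ has exactly the two preimages $\mu^0_{k+1}\neq\mu^1_{k+1}$ (they lie on distinct rays), the two words $0c_\alpha,1c_\alpha$ have a single preimage each, and the remaining words with $c_\alpha$ as a strict suffix have none. Combined with the first paragraph, this shows that $x$ admits exactly two representations precisely when $x$ is a suffix of $c_\alpha$, which is the desired equivalence.

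The main obstacle is the induction of the third paragraph: one must verify that the ray intercepts are genuinely non-natural-integer $\alpha$-numbers of zero equivalence class (so that $\widetilde{\mathbb{P}}$ is defined and proposition \ref{produitplusun} applies), that the hypothesis $\mu+1\neq 0$ holds at each step, and --- for the ``exactly two'' conclusion --- that the two rays described in the second paragraph genuinely exhaust \emph{all} non-natural-integer $\alpha$-numbers of zero equivalence class, which rests on the precise description of $\sigma_0,\sigma_1$ and their shifts supplied by proposition \ref{interceptmoinsun} and the remark after it.
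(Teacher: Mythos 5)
Your argument is correct and follows essentially the same route as the paper's own proof: both reduce the non-zero equivalence classes to corollary \ref{factorisationsturmien}, identify the non-natural-integer $\alpha$-numbers of zero class as the two backward rays through $\sigma_0$ and $\sigma_1$ (equivalently through $\gamma_0$ and $\gamma_1$), and propagate the two factorisations of $c_\alpha$ from corollary \ref{factorisationcarac} along these rays via proposition \ref{produitplusun}. Your version merely spells out the bookkeeping (the base cases at $\sigma_0,\sigma_1$, the distinctness of the two preimages, and the applicability conditions of proposition \ref{produitplusun}) that the paper leaves implicit.
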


\begin{proof}
We obtained two factorisations formulas for the characteristic word, given by :
\begin{align*}
	c_\alpha &=\widetilde{\mathbb{P}}_{\gamma_0}(c_\alpha)\\&=\widetilde{\mathbb{P}}_{\gamma_1}(c_\alpha)
\end{align*}
where $\gamma_0$ and $\gamma_1$ are the two $\alpha$-numbers such that $\gamma_0+2=0$ and $\gamma_1+2=0$. This shows that the suffixes of the characteristic word are given by the words
\begin{center}
	$x=\widetilde{\mathbb{P}}_{\rho}(c_\alpha)$
\end{center}
where $\rho$ runs through the set of $\alpha$-numbers satisfying $\rho+k=\gamma_0$ or $\rho+k=\gamma_1$ for some $k\geq 0$, this enumeration being exhaustive and without repetition. Having used every $\alpha$-numbers when this implication is proved and with in mind corollary \ref{factorisationsturmien}, the sturmian words satisfying $ii)$ also satisfy $i)$.
\end{proof}

\begin{prop}
Let $x$ be a sturmian word of slope $\alpha$. The following statements are equivalent :
\begin{enumerate}[i)]
	\item one of the two words $01c_\alpha$ or $10c_\alpha$ is a suffix of $x$,
	\item the word $x$ has no factorisation of the form
	\begin{center}
		$x=\widetilde{\mathbb{P}}_{\rho}(c_\alpha)$
	\end{center}
for an $\alpha$-number $\rho$.
\end{enumerate}
\end{prop}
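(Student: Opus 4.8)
The plan is to prove both implications simultaneously by determining the exact image of the factorisation map $\rho \longmapsto \widetilde{\mathbb{P}}_{\rho}(c_\alpha)$, and then checking that its complement among the sturmian words of slope $\alpha$ is precisely the set of words admitting $10c_\alpha$ or $01c_\alpha$ as a suffix. I would first sort every sturmian word $x$ of slope $\alpha$ by its equivalence class. If $x$ has non-zero equivalence class, corollary \ref{factorisationsturmien} already produces a factorisation $x=\widetilde{\mathbb{P}}_{\rho}(c_\alpha)$; moreover such an $x$ cannot satisfy $(i)$, because a word admitting $10c_\alpha$ or $01c_\alpha$ as a suffix has intercept equivalent to $\gamma_0$ or $\gamma_1$ (via $\rho+k=\gamma_0$ or $\rho+k=\gamma_1$) and hence has zero equivalence class. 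So all the real content lies in the zero-equivalence-class case.

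For the zero-equivalence-class words I would combine proposition \ref{classenulle} with the branching structure of the left-extensions of $c_\alpha$: since $c_\alpha$ is the unique characteristic word of slope $\alpha$, it is the only sturmian word admitting two sturmian left-extensions (property 1), so prepending letters is forced everywhere except at $c_\alpha$ itself. This forces a clean partition of the zero-equivalence-class words into three disjoint families: (a) the suffixes $T^{l}(c_\alpha)$ of the characteristic word, (b) the two words $0c_\alpha$ and $1c_\alpha$, and (c) the words admitting $10c_\alpha$ or $01c_\alpha$ as a suffix, the three families being disjoint by aperiodicity of $c_\alpha$ (a word with $10c_\alpha$ or $01c_\alpha$ as a suffix cannot be a suffix of $c_\alpha$, nor equal to $0c_\alpha$ or $1c_\alpha$, without forcing $c_\alpha$ to be periodic). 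The words of family (c) are exactly those satisfying $(i)$.

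Next I would read off from the earlier results where the factorisation map sends each admissible intercept. Proposition \ref{factorisationsuffixcarac} identifies the suffixes of $c_\alpha$ as exactly the words $\widetilde{\mathbb{P}}_{\rho}(c_\alpha)$ with $\rho+k=\gamma_0$ or $\rho+k=\gamma_1$, and these $\rho$ are precisely the intercepts of the words of family (c). The proposition immediately preceding the statement gives $\widetilde{\mathbb{P}}_{\sigma_1}(c_\alpha)=0c_\alpha$ and $\widetilde{\mathbb{P}}_{\sigma_0}(c_\alpha)=1c_\alpha$, covering family (b). Finally theorem \ref{prefixesuffixesturmien} shows that the non-zero-equivalence intercepts map onto the non-zero-equivalence words. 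Since proposition \ref{classenulle}(iii) guarantees that the admissible (non-integer) intercepts consist exactly of the non-zero-equivalence ones together with $\sigma_0,\sigma_1$ and the intercepts of family (c), the image of the factorisation map is exactly (non-zero-equivalence words) $\cup$ (suffixes of $c_\alpha$) $\cup\,\{0c_\alpha,1c_\alpha\}$.

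Putting these together, the image of $\rho\longmapsto\widetilde{\mathbb{P}}_{\rho}(c_\alpha)$ misses exactly family (c), i.e. the words satisfying $(i)$, which yields both directions of the equivalence. The step I expect to be the main obstacle is pinning down the partition of the zero-equivalence-class words and, in particular, verifying that the intercepts of family (c) coincide with the set $\{\rho : \rho+k=\gamma_0 \text{ or } \gamma_1\}$ of proposition \ref{factorisationsuffixcarac}; this is where one must use $T(10c_\alpha)=0c_\alpha$, the relations $\gamma_0+2=0$ and $\gamma_1+2=0$, and the forced-prepending structure. Some care is also needed to confirm that $0c_\alpha$ and $1c_\alpha$ genuinely lie outside family (c), so that families (b) and (c) are truly disjoint and the exclusion is exact.
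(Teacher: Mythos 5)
Your argument is correct and follows essentially the same route as the paper's: both classify sturmian words by equivalence class, invoke corollary \ref{factorisationsturmien} and theorem \ref{prefixesuffixesturmien} for the non-zero classes, and use proposition \ref{factorisationsuffixcarac} together with the identities $\widetilde{\mathbb{P}}_{\sigma_1}(c_\alpha)=0c_\alpha$ and $\widetilde{\mathbb{P}}_{\sigma_0}(c_\alpha)=1c_\alpha$ for the zero class, concluding that the image of $\rho\longmapsto\widetilde{\mathbb{P}}_{\rho}(c_\alpha)$ misses exactly the words admitting $10c_\alpha$ or $01c_\alpha$ as a suffix. You merely spell out the forced-left-extension partition of the zero-equivalence-class words, which the paper's terse proof leaves implicit.
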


\begin{proof}
We saw that a sturmian word associated to a formal intercept that has non-zero equivalence class cannot have the characteristic word as a suffix. On the other hand, in view of proposition \ref{factorisationsuffixcarac} above, the $\alpha$-numbers having zero class and that are not natural integers give infinite products by the definition \ref{defprho} exactly the sturmian words that are suffixes of the characteristic word or the two words $1c_\alpha$ and $0c_\alpha$. The proposition is a consequence of these characterisations and of the corollary \ref{factorisationsturmien}.

\end{proof}

\begin{theo}\label{theofactorisation}
Let $x$ be a sturmian word of slope $\alpha$, with $x\neq 0c_\alpha$ and $x\neq 1c_\alpha$. Then :
\begin{enumerate}
	\item the following statements are equivalent :
	\begin{itemize}
		\item the word $x$ and the characteristic word $c_\alpha$ have no suffix in common,
		\item there exist a unique $\alpha$-number $\rho$ of the slope $\alpha$ such that $x= \widetilde{\mathbb{P}}_{\rho}(c_\alpha) $,
	\end{itemize}
 \item the following statements are equivalent :
\begin{itemize}
		\item the word $x$ is a suffix of $c_\alpha$,
		\item there exists exactly two $\alpha$-numbers $\rho$ and $\gamma$ of the slope $\alpha$ such that $x= \widetilde{\mathbb{P}}_{\rho}(c_\alpha)=  \widetilde{\mathbb{P}}_{\gamma}(c_\alpha)$,
	\end{itemize}
	 \item the following statements are equivalent :
	\begin{itemize}
		\item one of the two words $10c_\alpha$ or $01c_\alpha$ is a suffix of $x$,
		\item the word $x$ has no factorisation of the form $x= \widetilde{\mathbb{P}}_{\rho}(c_\alpha) $ for an $\alpha$-number $\rho$.
	\end{itemize}
\end{enumerate}
The two exceptions $0c_\alpha$ and $1c_\alpha$ of respective formal intercepts $\sigma_0$ and $\sigma_1$ satisfy $0c_\alpha=\widetilde{\mathbb{P}}_{\sigma_1}(c_\alpha)$ and $1c_\alpha=\widetilde{\mathbb{P}}_{\sigma_0}(c_\alpha)$, and have no other factorisations of this form.
\end{theo}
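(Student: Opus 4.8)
The plan is to read this statement as a synthesis of the four results that precede it, each of which has already pinned down the number of $\alpha$-numbers $\rho$ with $x=\widetilde{\mathbb{P}}_\rho(c_\alpha)$ for one type of target word $x$. The one genuinely new ingredient I would isolate first is a dictionary between the combinatorial condition on suffixes and the arithmetic condition on equivalence classes: \emph{a sturmian word $x=T^\rho(c_\alpha)$ shares a suffix with $c_\alpha$ if and only if its formal intercept $\rho$ has zero equivalence class.} To see this, note that $x$ and $c_\alpha$ have a common suffix exactly when $T^i(x)=T^j(c_\alpha)$ for some $i,j\geq 0$; by Definition \ref{interceptequivalent} the word $T^i(x)$ has formal intercept $\rho+i$ while $T^j(c_\alpha)$ has the natural-integer intercept $j$, so the unicity in Theorem \ref{interceptsturmien} makes this equivalent to $\rho+i=j$, i.e.\ to $\rho+i$ being a natural integer, which is precisely the definition of $\rho$ being equivalent to the zero $\alpha$-number.

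With this dictionary in hand I would organise the argument around the map $\rho\mapsto \widetilde{\mathbb{P}}_\rho(c_\alpha)$ on non-natural-integer $\alpha$-numbers, whose fibres have already been computed regime by regime. On $\alpha$-numbers of non-zero class, Theorem \ref{prefixesuffixesturmien} and Lemma \ref{complementairenoyau} show that $\rho\mapsto\overline\rho$ is an involution with $\widetilde{\mathbb{P}}_{\overline\rho}(c_\alpha)=T^\rho(c_\alpha)$, so this map is a bijection onto the words having no common suffix with $c_\alpha$; this yields part 1. Proposition \ref{factorisationsuffixcarac} gives fibre size two over the suffixes of $c_\alpha$, which is part 2. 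The proposition characterising words admitting no such factorisation gives fibre size zero over the words having $01c_\alpha$ or $10c_\alpha$ as a suffix, which is part 3. Finally the proposition giving $0c_\alpha=\widetilde{\mathbb{P}}_{\sigma_1}(c_\alpha)$ and $1c_\alpha=\widetilde{\mathbb{P}}_{\sigma_0}(c_\alpha)$ handles the two exceptional words. Since, by Proposition \ref{classenulle}, every zero-class intercept falls into exactly one of ``$x$ a suffix of $c_\alpha$'' or ``$c_\alpha$ a strict suffix of $x$,'' and the latter splits into $\{0c_\alpha,1c_\alpha\}$ and the words with $01c_\alpha$ or $10c_\alpha$ as a suffix, these four regimes partition all sturmian words.

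Concretely, for part 1 the direct implication follows from the dictionary (no common suffix means $\rho$ has non-zero class) together with the bijection above, which supplies both existence and unicity; the converse I would prove by contraposition, since if $x$ shares a suffix with $c_\alpha$ then $\rho$ has zero class, so either $x$ is a suffix of $c_\alpha$ --- giving two intercepts by Proposition \ref{factorisationsuffixcarac}, against unicity --- or $c_\alpha$ is a strict suffix of $x$, so (having excluded $0c_\alpha,1c_\alpha$) $x$ has no factorisation at all, against existence. Parts 2 and 3 I would obtain by direct citation of the two preceding propositions. For the exceptions, existence is the dedicated proposition; for ``no other factorisation'' I would note that $0c_\alpha$ is not a suffix of $c_\alpha$, so Proposition \ref{factorisationsuffixcarac} forbids a second intercept, and that any further intercept $\mu$ would have zero class (non-zero class words have no common suffix), while transporting such a representation by $T$ through Proposition \ref{produitplusun} would force a sturmian left-extension of $0c_\alpha$ into the image of the zero-class map, which the classification of that image excludes. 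I expect the main obstacle to be exactly this last bookkeeping: verifying that the four target-types cover every sturmian word without overlap, and that the exceptional fibres over $0c_\alpha$ and $1c_\alpha$ have size exactly one rather than merely at least one.
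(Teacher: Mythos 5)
Your proposal is correct and follows essentially the same route as the paper, which states Theorem \ref{theofactorisation} as a synthesis of Theorem \ref{prefixesuffixesturmien}, Corollary \ref{factorisationsturmien}, Proposition \ref{factorisationsuffixcarac}, the proposition on words with $01c_\alpha$ or $10c_\alpha$ as a suffix, and the proposition on $\sigma_0,\sigma_1$, without a separate proof. Your explicit dictionary between ``$x$ shares a suffix with $c_\alpha$'' and ``$\rho$ has zero equivalence class,'' and your check that the four regimes partition the sturmian words with the fibres over $0c_\alpha$ and $1c_\alpha$ having size exactly one, is exactly the bookkeeping the paper leaves implicit.
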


\subsection{Torsion relations}

Consider the slope $\alpha=1/\varphi^2$ of the Fibonacci word $c_{\alpha}$, where $\varphi$ is the golden ratio. We denote $(q_n)_{n\geq -1}=(F_n)_{n\geq -1}$ its sequence of continuants, also known as the Fibonacci sequence,where $F_{-1}=0$, $F_0=1$ and $F_{n+2}=F_{n+1}+F_n$ for $n\geq -1$. The $\alpha$-numbers of this slope write uniquely in the form
\begin{center}
	$\rho=\displaystyle \sum_{i \geq 0}b_{i+1}F_i$
\end{center}
where the coefficients $(b_i)_{i\geq 1}$ equal 0 or 1, and are submitted to the condition $b_ib_{i+1}=0$ for all $i\geq 1$. consider, for $j\in \{0,1,2,3\}$, the $\alpha$-numbers, that are all non-equivalent to each other :
\begin{center}
	$\displaystyle\mathcal{F}_4^{(j)}=\sum_{i\geq 1}F_{4i+j}$,
\end{center}
and let us compute their complement. We have 
	$ \Lambda_{\mathcal{F}_4^{(j)}}(n)=4\left\lfloor \frac{n-j}{4}\right\rfloor+j$ for $j\in \{0,1,2,3\}$ and $n\geq 7$, providing :
\begin{center}
	$\displaystyle F_{4\left\lfloor \frac{n-j}{4}\right\rfloor+j+1}-2-\sum_{i=1}^{\left\lfloor \frac{n-j}{4}\right\rfloor}F_{4i+j}=m_j+\sum_{i=1}^{\left\lfloor \frac{n-j}{4}\right\rfloor-1}F_{4i+j+2}$
\end{center}
where $m_0=F_2$, $m_1=F_3$, $m_2=F_2+F_4$ and $m_3=F_3+F_5$, with in mind the classical formulas  $ \sum_{i=1}^NF_{2i}=F_{2N+1}-1$ and $ \sum_{i=0}^NF_{2i+1}=F_{2N+2}-1$ for $N\geq 1$. This shows :
\begin{center}
	$\overline{\mathcal{F}_4^{(0)}}=\mathcal{F}_4^{(2)}+F_2$, \quad $\overline{\mathcal{F}_4^{(1)}}=\mathcal{F}_4^{(3)}+F_3$, \quad $\overline{\mathcal{F}_4^{(2)}}=\mathcal{F}_4^{(0)}+F_2$, \quad and \quad $\overline{\mathcal{F}_4^{(3)}}=\mathcal{F}_4^{(1)}+F_3$.
\end{center}
If we consider the index $j\in \{0,1,2,3\}$ as an element of $\mathbb{Z}/4\mathbb{Z}$, then we've just see that the equivalence class of the complement of $\mathcal{F}_4^{(j)}$ is the class of $\mathcal{F}_4^{(j+2)}$.

Consider now the $\alpha$-numbers :
\begin{center}
	$\displaystyle \mathcal{F}_3^{(j)}=\sum_{i\geq 1}F_{3i+j}$
\end{center}
for $j\in \{0,1,2\}$. As before we compute $ \Lambda_{\mathcal{F}_3^{(j)}}(n)=3\left\lfloor \frac{n-j}{3}\right\rfloor+j$ for $n\geq 3$, and the computations :
\begin{align*}
  F_{3\left\lfloor \frac{n-j}{3}\right\rfloor+j+1}-\sum_{i=1}^{\left\lfloor \frac{n-j}{3}\right\rfloor} F_{3i+j}&= F_{3\left(\left\lfloor \frac{n-j}{3}\right\rfloor-1\right)+j+1}-\sum_{i=1}^{\left\lfloor \frac{n-j}{3}\right\rfloor-2} F_{3i+j} \\ & = F_{3\left(\left\lfloor \frac{n-j}{3}\right\rfloor-1\right)+j}+F_{3\left(\left\lfloor \frac{n-j}{3}\right\rfloor-2\right)+j+2}-\sum_{i=1}^{\left\lfloor \frac{n-j}{3}\right\rfloor-2} F_{3i+j} \\ &= F_{2+j} + \sum_{i=1}^{\left\lfloor \frac{n-j}{3}\right\rfloor-1} F_{3i+j}
\end{align*}
show that
\begin{center}
	$\overline{\mathcal{F}_3^{(0)}}=\mathcal{F}_3^{(0)}$, \quad $\overline{\mathcal{F}_3^{(1)}}=\mathcal{F}_3^{(1)}+F_1$, \quad and \quad $\overline{\mathcal{F}_3^{(2)}}=\mathcal{F}_3^{(2)}+F_3$.
\end{center}
In particular, the $\alpha$-numbers $\mathcal{F}_3^{(j)}$, for $j\in\{0,1,2\}$, are all equivalent to their complement.

\begin{theo}\label{complementfacile}
Let $\alpha=[0,a_1,a_2,\cdots]$ be a slope with continuants $(q_n)_{n\geq -1}$. Let $M\subset \mathbb{N}\backslash\{0,1\}$ be infinite with infinite complement in $\mathbb{N}\backslash\{0,1\}$. Then the complement of the $\alpha$-numbers of the slope $\alpha$
\begin{center}
	$\displaystyle \mathcal{F}_M^{(0)}=\sum_{i\in M}a_{2i+1}q_{2i}$ \quad and \quad $\displaystyle \mathcal{F}_M^{(1)}=\sum_{i\in M}a_{2i+2}q_{2i+1}$
\end{center}
are given by the following formulas, where $M^c$ denotes the complement of $M$ in $\mathbb{N}\backslash\{0,1\}$ :
\begin{center}
	$\displaystyle \overline{\mathcal{F}_M^{(0)}}=q_3-2 +\mathcal{F}_{M^c}^{(0)}$
\quad and \quad $\displaystyle \overline{\mathcal{F}_M^{(1)}}=q_4-2 +\mathcal{F}_{M^c}^{(1)}$.
\end{center}
\end{theo}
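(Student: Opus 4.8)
The plan is to compute each complement directly from its definition in Proposition \ref{interceptproduit}, namely $\Psi_n(\overline{\rho}) = \Psi_n(q_{\Lambda_\rho(n)+1} - 2 - \rho_{\Lambda_\rho(n)+1})$, and then to recognise the resulting integers as partial sums of the claimed $\alpha$-numbers. First I would record that $\mathcal{F}_M^{(0)}$ and $\mathcal{F}_M^{(1)}$ are genuine $\alpha$-numbers: their nonzero Ostrowski coefficients are $b_{2i+1}=a_{2i+1}$ (resp. $b_{2i+2}=a_{2i+2}$) for $i\in M$, all sitting at mutually non-adjacent indices, so the Ostrowski conditions of \ref{interceptdef} hold. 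Their supports $\{2i : i\in M\}$ (resp. $\{2i+1 : i\in M\}$) are infinite, so they are not natural integers, and since $M^c$ is also infinite they eventually match none of the three patterns of Proposition \ref{classenulle} and differ from $\sigma_0,\sigma_1$; hence Proposition \ref{interceptproduit} applies and the complements are defined.

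I will treat $\mathcal{F}_M^{(0)}$ in detail, the case of $\mathcal{F}_M^{(1)}$ being identical after shifting indices by one parity. Fix $n$ and set $\Lambda_\rho(n)=2j$ with $j=\min\{i\in M : 2i\geq n\}$; then $j\in M$ and $\rho_{2j+1}=\sum_{i\in M,\,i\leq j}a_{2i+1}q_{2i}$. The key computation is the telescoping identity $a_{2i+1}q_{2i}=q_{2i+1}-q_{2i-1}$, coming from the fundamental recurrence, which yields $\sum_{i=2}^{j}a_{2i+1}q_{2i}=q_{2j+1}-q_3$. Substituting,
\begin{align*}
q_{2j+1}-2-\rho_{2j+1} &= q_{2j+1}-2-\sum_{i\in M,\,i\leq j}a_{2i+1}q_{2i} \\
&= q_3-2+\sum_{i=2}^{j}a_{2i+1}q_{2i}-\sum_{i\in M,\,2\leq i\leq j}a_{2i+1}q_{2i} \\
&= q_3-2+\sum_{i\in M^c,\,2\leq i<j}a_{2i+1}q_{2i},
\end{align*}
where the upper bound drops to $i<j$ because $j\in M$. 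As $n\to\infty$ we have $j\to\infty$, and the final sum is exactly $\Psi_{2j}(\mathcal{F}_{M^c}^{(0)})$, so $\overline{\mathcal{F}_M^{(0)}}$ is asymptotically $q_3-2+\mathcal{F}_{M^c}^{(0)}$. The analogous computation for $\mathcal{F}_M^{(1)}$ uses the odd telescoping $a_{2i+2}q_{2i+1}=q_{2i+2}-q_{2i}$, producing $q_4-2$ in place of $q_3-2$.

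To make the final ``$+(q_3-2)$'' rigorous as the $\alpha$-number increment of Proposition \ref{interceptincrementation}, I would verify that the integer on the right is already displayed in its Ostrowski expansion: $q_3-2<q_3$ has support within $\{q_0,q_1,q_2\}$, whereas $\mathcal{F}_{M^c}^{(0)}$ has support within $\{q_4,q_6,\dots\}$ since $M^c\subset\mathbb{N}\setminus\{0,1\}$. The two supports are disjoint, the index $q_3$ is a genuine gap, and the junction condition $b_{2i+1}=a_{2i+1}\Rightarrow b_{2i}=0$ holds automatically. Consequently $\Psi_{2j}(\overline{\mathcal{F}_M^{(0)}})=\Psi_{2j}(\mathcal{F}_{M^c}^{(0)})+(q_3-2)=\Psi_{2j}\bigl(\mathcal{F}_{M^c}^{(0)}+(q_3-2)\bigr)$ for all large $j$ by Proposition \ref{interceptincrementation}, and since the indices $2j$ are cofinal the projective-limit compatibility forces equality of the two $\alpha$-numbers; likewise for $\mathcal{F}_M^{(1)}$.

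The main obstacle is the bookkeeping forced by the gaps of $M$: the telescoping collapses only the full range $i=2,\dots,j$, so one must subtract the $M^c$-part and then confirm that the leftover constant $q_3-2$ (resp. $q_4-2$) and the residual $M^c$-sum genuinely \emph{concatenate} into a valid Ostrowski representation rather than interacting through carries. The restriction $M\subset\mathbb{N}\setminus\{0,1\}$ is precisely what guarantees this clean separation (and keeps all coefficients Ostrowski-admissible); once that disjointness is established, the identification with Proposition \ref{interceptincrementation} is routine.
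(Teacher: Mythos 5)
Your proposal is correct and follows essentially the same route as the paper: both apply Proposition \ref{interceptproduit} at the indices $2j$ (resp.\ $2j+1$) lying over the support, use the telescoping identity $a_{2i+1}q_{2i}=q_{2i+1}-q_{2i-1}$ (resp.\ its odd analogue) to rewrite $q_{\Lambda+1}-2-\rho_{\Lambda+1}$ as $q_3-2$ (resp.\ $q_4-2$) plus the complementary partial sum, and conclude by cofinality in the projective limit. Your extra verification that the constant and the $M^c$-sum concatenate into a valid Ostrowski expansion, and the appeal to Proposition \ref{interceptincrementation}, only make explicit what the paper leaves implicit.
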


\begin{proof}
We give the proof for the computation of $\overline{\mathcal{F}_M^{(0)}}$, the second one following the same lines.

We first see that for all $m\in M$, we have
\begin{center}
	$\Lambda_{\mathcal{F}_M^{(0)}}(2m)=2m$
\end{center}
and the $\alpha$-number $\overline{\mathcal{F}_M^{(0)}}$ admits as $2m$-th component the integer
\begin{align*}
	\displaystyle \Psi_{2m}(q_{2m+1}-2- \Psi_{2m+1}(\mathcal{F}_M^{(0)}))&=\Psi_{2m}\left(q_{2m+1}-2- \sum_{i\in M, i\leq m}a_{2i+1}q_{2i}\right) \\ &=\Psi_{2m}\left(q_3-2+\sum_{i\in M^c, i\leq m}a_{2i+1}q_{2i}\right) \\ &=q_3-2+\sum_{i\in M^c, i\leq m-1}a_{2i+1}q_{2i}
\end{align*}
which allows us to conclude that $\overline{\mathcal{F}_M^{(0)}}=q_3-2 +\mathcal{F}_{M^c}^{(0)}$.
\end{proof}

\begin{prop}\label{complementpair}
Let $\alpha$ be a slope of partial quotients $(a_i)_{i \geq1}$ with continuants $(q_n)_{n\geq -1}$. We suppose that the integers $(a_i)_{i\geq 1}$ are eventually all even, from a index $2k_0$. Then the following three $\alpha$-numbers
\begin{center}
	$\displaystyle \mathcal{S}_0=\sum_{i\geq k_0} \frac{a_{2i+1}}{2}q_{2i}$, \quad $\displaystyle\mathcal{S}_1=\sum_{i\geq k_0} \frac{a_{2i+2}}{2}q_{2i+1}$, \quad and \quad $\displaystyle\mathcal{S}_2=\sum_{i\geq  2k_0} \frac{a_{i+1}}{2}q_{i}$
\end{center}
are $\alpha$-numbers of the slope $\alpha$ equivalent to their complement, and belong to different equivalence classes.
\end{prop}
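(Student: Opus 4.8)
The plan is to proceed in three stages: check that the three sums are legitimate $\alpha$-numbers with non-zero equivalence class, compute their complements explicitly, and then separate their classes.

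First I would verify the Ostrowski conditions. Since $a_i$ is even for $i\geq 2k_0$, each coefficient of the form $a_\bullet/2$ is a positive integer, bounded by $a_\bullet$, and in fact strictly below it because $a_\bullet\geq 2$; hence the implication $b_{i+1}=a_{i+1}\Rightarrow b_i=0$ is never activated, and $\mathcal{S}_0,\mathcal{S}_1,\mathcal{S}_2$ are genuine $\alpha$-numbers. Moreover each has infinitely many nonzero coefficients, none of which ever attains its maximal value, so none of the three alternatives of Proposition \ref{classenulle} can hold. Thus all three have non-zero equivalence class, are not natural integers, and differ from $\sigma_0,\sigma_1$, which is exactly what legitimizes applying Proposition \ref{interceptproduit} to compute their complements.

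The core is the complement computation, whose engine is the telescoping identity $a_{i+1}q_i=q_{i+1}-q_{i-1}$. This puts each partial sum $\Psi_{\Lambda+1}(\mathcal{S}_j)$ (evaluated at $\Lambda=\Lambda_{\mathcal{S}_j}(n)=n$, for $n$ in the relevant support) into closed form: for $\mathcal{S}_2$, whose support is the whole tail $[2k_0,\infty)$, one gets $\Psi_{n+1}(\mathcal{S}_2)=\tfrac12(q_{n+1}+q_n-q_{2k_0}-q_{2k_0-1})$, while for $\mathcal{S}_0$ and $\mathcal{S}_1$ the support is confined to the even, resp. odd, indices and the telescoping collapses to $\tfrac12(q_{\Lambda+1}-q_{2k_0-1})$, resp. $\tfrac12(q_{\Lambda+1}-q_{2k_0})$. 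Substituting into $q_{\Lambda+1}-2-\Psi_{\Lambda+1}(\mathcal{S}_j)$ and isolating $\Psi_\Lambda(\mathcal{S}_j)$ produces, uniformly in the three cases, an expression of the shape
\[ q_{\Lambda+1}-2-\Psi_{\Lambda+1}(\mathcal{S}_j)=\Psi_\Lambda(\mathcal{S}_j)+\tfrac{a_{\Lambda+1}}{2}\,q_\Lambda+c_j , \]
with the explicit constants $c_0=q_{2k_0-1}-2$, $c_1=q_{2k_0}-2$, $c_2=q_{2k_0}+q_{2k_0-1}-2$. Applying $\Psi_\Lambda$ (i.e. reducing modulo $q_\Lambda$, as in Proposition \ref{interceptproduit}) annihilates the middle term precisely because $a_{\Lambda+1}/2$ is an \emph{integer}, giving $\Psi_\Lambda(\overline{\mathcal{S}_j})\equiv\Psi_\Lambda(\mathcal{S}_j)+c_j \pmod{q_\Lambda}$. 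I would then upgrade this to an equality: as $c_j$ is fixed while the gaps $q_\Lambda-q_{\Lambda-1}$ grow, one has $\Psi_\Lambda(\mathcal{S}_j)+c_j<q_\Lambda$ for all large indices, so no wrap-around occurs and $\Psi_\Lambda(\overline{\mathcal{S}_j})=\Psi_\Lambda(\mathcal{S}_j)+c_j$. By Proposition \ref{interceptincrementation} this means $\overline{\mathcal{S}_j}=\mathcal{S}_j+c_j$, whence $\overline{\mathcal{S}_j}\equiv\mathcal{S}_j$. (Lemma \ref{complementairenoyau} guarantees the complements also have non-zero class, so the identification theorems apply throughout.)

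Finally, distinctness of the three classes follows from Proposition \ref{interceptidentification}: two $\alpha$-numbers with non-zero class are equivalent iff their coefficient sequences eventually agree. Here the eventual supports are disjoint in pattern — the even indices for $\mathcal{S}_0$, the odd indices for $\mathcal{S}_1$, and all indices for $\mathcal{S}_2$ — and since no nonzero coefficient equals its maximal value, no two of these patterns can eventually coincide, so the three $\alpha$-numbers are pairwise non-equivalent. I expect the main obstacle to be the $\mathcal{S}_2$ case: because its support is the full tail rather than a single parity class, the telescoping leaves the residual multiple $\tfrac{a_{\Lambda+1}}{2}q_\Lambda$, and the entire argument hinges on this being an \emph{exact} integer multiple of $q_\Lambda$ — this is exactly where the evenness hypothesis is indispensable — together with the accompanying no-wrap-around estimate, which is the second delicate point.
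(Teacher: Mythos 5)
Your proof follows essentially the same route as the paper's: compute $\Lambda_{\mathcal{S}_j}(n)$ explicitly, telescope $a_{i+1}q_i=q_{i+1}-q_{i-1}$ to put $\Psi_{\Lambda+1}(\mathcal{S}_j)$ in closed form, reduce $q_{\Lambda+1}-2-\Psi_{\Lambda+1}(\mathcal{S}_j)$ modulo $q_{\Lambda}$ to identify $\overline{\mathcal{S}_j}$ as $\mathcal{S}_j$ shifted by a fixed constant, and then separate the three classes with Proposition \ref{interceptidentification}. The only deviations are cosmetic: your preliminary check of the Ostrowski conditions and of the non-zero equivalence class is a welcome addition that the paper leaves implicit, and in the $\mathcal{S}_2$ case the residual multiple of $q_{\Lambda}$ is $\left(\tfrac{a_{\Lambda+1}}{2}-1\right)q_{\Lambda}$ rather than $\tfrac{a_{\Lambda+1}}{2}q_{\Lambda}$, a harmless discrepancy since either term is annihilated by the reduction modulo $q_{\Lambda}$ and your no-wrap-around estimate is carried out on the correct quantity $\Psi_{\Lambda}(\mathcal{S}_j)+c_j$.
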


\begin{proof}
The cases of $\mathcal{S}_0$ and $\mathcal{S}_1$ are similar, so we will restrict to the cases of $\mathcal{S}_0$ and  $\mathcal{S}_2$. The fact that they are not equivalent is a consequence of proposition \ref{interceptidentification}.

Let us proceed to the case of $\overline{\mathcal{S}_0}$. As for the proof of proposition \ref{complementfacile}, we have, for all $m\geq \left\lfloor k_0/2\right\rfloor+1$,
\begin{center}
	$\Lambda_{\mathcal{S}_0}(2m)=2m$
\end{center}
and we compute, for such a $m\geq k_0$,
\begin{align*}
	\displaystyle \Psi_{2m}(q_{2m+1}-2- \Psi_{2m+1}(\mathcal{S}_0))&=\Psi_{2m}\left(q_{2m+1}-2- \sum_{i =  k_0}^m\frac{a_{2i+1}}{2}q_{2i}\right) \\ &=\Psi_{2m}\left(q_{ 2k_0-1}-2+\sum_{i =  k_0}^m\frac{a_{2i+1}}{2}q_{2i}\right) \\ &=q_{ 2k_0-1}-2+\sum_{i =  k_0}^{m-1}\frac{a_{2i+1}}{2}q_{2i}
\end{align*}
given the classical formula
\begin{center}
	$q_{2m+1}-q_{2k_0-1}=a_{2m+1}q_{2m}+a_{2m-1}q_{2m-2}+\cdots + a_{2k_0+1}q_{2k_0}$
\end{center}
and given that the support in the Ostrowski numeration system of the integer $q_{ 2k_0-1}-2$ is contained in the integer interval $[1, 2k_0-1[$. This shows that
\begin{center}
	$\displaystyle \overline{\mathcal{S}_0}=q_{ 2k_0-1}-2+\sum_{i =  k_0}^{+\infty}\frac{a_{2i+1}}{2}q_{2i}$
\end{center}
which upon the use of proposition \ref{interceptidentification}, shows that $\mathcal{S}_0$ is equivalent to its complement.

For the case of $\overline{\mathcal{S}_2}$, we first see that $\Lambda_{\mathcal{S}_2}(m)=m$ for all $m\geq 2k_0$. We get
\begin{align*}
	\displaystyle \Psi_{m}(q_{m+1}-2- \Psi_{m+1}(\mathcal{S}_2))&=\Psi_{m}\left(q_{m+1}-2- \sum_{i =  2k_0}^{m}\frac{a_{i+1}}{2}q_{i}\right) \\ &=\Psi_{m}\left(\left(\frac{a_{m+1}}{2}-1\right)q_m+q_{2k_0-1}+q_{2k_0-2}-2+\sum_{i =  2k_0}^{m-1}\frac{a_{i+1}}{2}q_{i}\right) \\ &=q_{2k_0-1}+q_{2k_0-2}-2+\sum_{i =  2k_0}^{m-1}\frac{a_{i+1}}{2}q_{2i}
\end{align*}
given the formula
\begin{center}
	$q_{m+1}+q_m-q_{2k_0-1}-q_{2k_0-2}=a_{m+1}q_{m}+a_{m}q_{m-1}+\cdots + a_{2k_0+1}q_{2k_0}$
\end{center}
and the fact that the integer $q_{2k_0-2}-2$ has a support in the Ostrowski numeration system contained in the integer interval $[1,2k_0-2[$. This shows that 
\begin{center}
	$\displaystyle \overline{\mathcal{S}_2}=q_{2k_0-1}+q_{2k_0-2}-2+\sum_{i =  2k_0}^{+\infty}\frac{a_{i+1}}{2}q_{2i}$
\end{center}
which upon the use of proposition \ref{interceptidentification}, shows that $\mathcal{S}_2$ is equivalent to its complement.
\end{proof}

We now aim to construct $\alpha$-numbers equivalent to their complement for general slopes. Given the above result, we have to treat the general case where the coefficients $(a_i)_{i\geq 1}$ are not eventually even. We are going to use the following fomulas on the continuants $(q_i)_{\geq -1}$, valid for all $i\geq 0$ and all $k\geq 0$ :
\begin{center}
$q_{i+2}-q_{i}=a_{i+2}q_{i+1}$ \quad and \quad $\displaystyle q_{i+3+k}-q_{i}=(a_{i+3+k}-1)q_{i+2+k}+\sum_{l=1}^ka_{i+2+l}q_{i+1+l}+(a_{i+2}+1)q_{i+1}$	
\end{center}
which can be proved by easy inductions on $k\geq 1$, the first one being actually trivial. The first one will be used when $a_{i+2}$ is even, and the second one when $a_{i+2}$ and $a_{i+3+k}$ are odd and all the intermediate integers $(a_{i+2+l})_{l=1}^k$ are even.

We consider the set $\mathcal{B}$ of finite words over the alphabet $\{0,1\}$ given by :
\begin{center}
	$\mathcal{B}=\{00,01\}\cup \{10^k10, 10^k11 \ | \ k\geq 0\}$
\end{center}
and start with a lemma.

\begin{lem}\label{bfactorisationfini}
For $u$ a finite word over the alphabet $\{0,1\}$, we have :
\begin{enumerate}
	\item the word $u$ has at most one factorisation made of elements of $\mathcal{B}$,
	\item one and only one of the three words $u$, $1u$ and $11u$ has a factorisation made of elements of $\mathcal{B}$,
  \item every infinite word $y$ over the alphabet $\{0,1\}$ in which the letter 1 appears infinitely many times has a unique factorisation made of elements of $\mathcal{B}$.
	\end{enumerate}
\end{lem}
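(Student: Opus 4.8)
The plan is to encode $\mathcal{B}^*$-factorisability by a single deterministic automaton and to read all three assertions off its structure. The first step is the elementary observation that $\mathcal{B}$ is a \emph{prefix code}, i.e.\ no element of $\mathcal{B}$ is a proper prefix of another: $00$ and $01$ are the only codewords beginning with $0$ and neither is a prefix of the other, all other codewords begin with $1$, and two distinct words of the shape $10^{k}10,\ 10^{k}11$ either differ inside their run of zeros (when the exponents differ) or have equal length and differ in their last letter (when the exponents agree). The standard fact that a prefix code is a code then shows that every finite or infinite word admits \emph{at most one} factorisation into elements of $\mathcal{B}$; this already gives assertion (1) and the uniqueness half of (3).

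Next I would build the decoding automaton whose states record the partial codeword read so far: $S_\varepsilon$ (at a codeword boundary), $S_0$ (a single $0$ has been read, awaiting the second letter of $00$ or $01$), $S_1$ (a $1$ followed by a run of zeros has been read, awaiting the closing $1$), and $S_R$ (the closing $1$ of a block $10^{k}1$ has been read, awaiting the final letter). The transitions are $S_\varepsilon\xrightarrow{0}S_0$, $S_\varepsilon\xrightarrow{1}S_1$, $S_0\xrightarrow{0,1}S_\varepsilon$, $S_1\xrightarrow{0}S_1$, $S_1\xrightarrow{1}S_R$, and $S_R\xrightarrow{0,1}S_\varepsilon$. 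Because $\mathcal{B}$ is a prefix code, at each boundary the next codeword is forced, so this automaton faithfully decodes: writing $\delta(s,w)$ for the state reached from $s$ after reading $w$, a word $w$ lies in $\mathcal{B}^*$ precisely when $\delta(S_\varepsilon,w)=S_\varepsilon$.

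For the existence half of (3) I would run this total deterministic automaton on an infinite word $y$ in which $1$ occurs infinitely often, and argue that $S_\varepsilon$ is visited infinitely often: from $S_0$ and from $S_R$ every letter returns to $S_\varepsilon$, while $S_1$ can be sustained only by reading zeros, which cannot happen forever. Each excursion from $S_\varepsilon$ back to $S_\varepsilon$ spells exactly one element of $\mathcal{B}$, so these excursions assemble into the (unique) factorisation of $y$.

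The substantive part, and the step I expect to be the real obstacle, is (2). The key reformulation is that prepending $0$, $1$ or $2$ letters $1$ to $u$ amounts to starting the automaton in $S_\varepsilon$, $S_1$ or $S_R$ (since $S_\varepsilon\xrightarrow{1}S_1\xrightarrow{1}S_R$), so that $u$, $1u$, $11u$ are factorisable exactly when $\delta(S_\varepsilon,u)$, $\delta(S_1,u)$, $\delta(S_R,u)$ equal $S_\varepsilon$. I would therefore follow, along the prefixes $v$ of $u$, the triple $\bigl(\delta(S_\varepsilon,v),\delta(S_1,v),\delta(S_R,v)\bigr)$ and prove by induction on $|v|$ the invariant that its three entries are always distinct with exactly one equal to $S_\varepsilon$; concretely their underlying set is always $\{S_\varepsilon,S_1,S_0\}$ or $\{S_\varepsilon,S_1,S_R\}$. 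The inductive step reduces to checking that each letter-map $\delta(\cdot,0)$ and $\delta(\cdot,1)$ restricts to an injection between these two three-element sets, the base case being the triple $(S_\varepsilon,S_1,S_R)$ at $v=\varepsilon$. Evaluating the invariant at $v=u$ then shows exactly one coordinate equals $S_\varepsilon$, i.e.\ exactly one of $u$, $1u$, $11u$ factorises, which is assertion (2). The delicate point is isolating precisely this invariant (equivalently, that the subset $\{s:\delta(s,u)=S_\varepsilon\}$ meets $\{S_\varepsilon,S_1,S_R\}$ in a single state for every $u$); once it is guessed, its verification is a short finite computation.
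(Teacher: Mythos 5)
Your proof is correct, and for the substantive assertion (2) it takes a genuinely different route from the paper. Parts (1) and (3) agree in substance with the paper's argument: the paper also notes that no element of $\mathcal{B}$ is a prefix of another and repeatedly peels off the forced first codeword, which is exactly what your decoding automaton does letter by letter. For (2), however, the paper argues by induction on $|u|$ \emph{from the right}: it checks the claim directly for $|u|\le 3$ and for the shapes $0^k$, $0^k10$, $0^k11$, and otherwise strips from $u$ a suffix lying in $\mathcal{B}$ (an element of $\{00,01\}$ or of $\{10^k10,10^k11\}$, according to how $u$ ends and how many occurrences of $1$ it contains), reducing $u$, $1u$, $11u$ simultaneously to a shorter instance. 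You instead work \emph{from the left}: membership in $\mathcal{B}^*$ becomes return to the state $S_\varepsilon$ of a four-state deterministic automaton, the words $u$, $1u$, $11u$ correspond to the three start states $S_\varepsilon$, $S_1$, $S_R$, and the invariant is that the image of $\{S_\varepsilon,S_1,S_R\}$ under $\delta(\cdot,v)$ is always one of the two sets $\{S_\varepsilon,S_0,S_1\}$ or $\{S_\varepsilon,S_1,S_R\}$, each of which contains $S_\varepsilon$ exactly once; I verified the four required transitions and the induction does close. Your version buys a uniform treatment of all three assertions and replaces the paper's suffix case analysis (which implicitly relies on the fact that a $\mathcal{B}^*$-factorisation of a word ending in $00$, $01$, $10$ or $11$ must end with the corresponding codeword) by a purely local computation on four states; the paper's version stays closer to the combinatorics of the words themselves and needs no auxiliary automaton.
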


\begin{proof}
1) Since no elements of $\mathcal{B}$ is prefix of one another, if $u$ has a factorisation made of elements of $\mathcal{B}$, the first terms of these factorisations must agree. We conclude by omitting this common first term and using this same argument.

2) The statement is easily checked for finite words $u$ with length $|u|\leq 3$. We then proceed by induction on $|u|$, and we assume now $|u|>3$. If $u$ is a power of the letter 0, the result is easily checked. If $u$ ends with 00 or 01, then we can conclude by induction. If $u$ has at least three occurrences of the letter 1, then it admits a suffix belonging to the set $ \{10^k10, 10^k11 \ | \ k\geq 0\}$, and we can conclude by induction. Likewise if $u$ admits two occurrences of the letter 1 and ends with the letter 0. At last, if $u$ is of the form  $0^k11$ or $0^k10$ for some $k\geq0$, then the statement is easily checked.

3) From the assumption, at least one element of $\mathcal{B}$ is a prefix of $y$, and it is uniquely determined from the form of $\mathcal{B}$. By omitting this prefix from $y$, and using the same argument, we get existence ance unicity of such a factorisation. Notice that the infinite word $10000\cdots$ has no factorisation made of elements of $\mathcal{B}$.
\end{proof}

To construct $\alpha$-numbers equivalent to their complement, we are going to use the notion of factorisation of an indexed suffix of an infinite word $y$. This word is bound to be the infinite word with $i$-th letter being 0 or 1 depending on the parity of the integer $a_i$. A factorisation of an indexed suffix of $y$ is a couple $(n,(u_j)_{j\geq1})$  where $n\geq 0$ is an integer and $(u_j)_{j\geq1}$ is a sequence of words such that
\begin{center}
	$T^n(y)=u_1u_2u_3\cdots$.
\end{center}
and we will say that a factorisation is a $\mathcal{B}$-factorisation when all its terms belong to the set $\mathcal{B}$. We will say that two factorisations $(n,(u_j)_{j\geq1})$ and $(m,(v_j)_{j\geq1})$ of an indexed suffix of $y$, with $n\leq m$, are equivalent if there exists an integer $l\geq 0$ such that for all $j\geq 1$, $v_j=u_{j+l}$ and if $m=n+|u_1u_2\cdots u_{l-1}|$.

\begin{prop}\label{suffixeindexe}
Let $y$ be an infinite word over the alphabet $\mathcal{A}$. We assume that the letter 1 appears infinitely many times in $y$. Then the infinite word $y$ has exactly three equivalence classes of $\mathcal{B}$-factorisations of indexed suffixes.
\end{prop}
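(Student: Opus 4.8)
The plan is to convert the counting of equivalence classes into counting the connected components of an explicit functional graph on $\mathbb{N}$. Since $y$ contains infinitely many occurrences of $1$, so does every suffix $T^n(y)$, and hence by Lemma \ref{bfactorisationfini}.3 each $T^n(y)$ has a \emph{unique} $\mathcal{B}$-factorisation. Consequently the $\mathcal{B}$-factorisations of indexed suffixes of $y$ are in bijection with $\mathbb{N}$, the integer $n$ corresponding to the factorisation of $T^n(y)$. Writing $y=y_0y_1y_2\cdots$, I would introduce the successor map $s:\mathbb{N}\to\mathbb{N}$ sending $n$ to the next cut point of the factorisation of $T^n(y)$, namely $s(n)=n+2$ when $y_n=0$ (the first block is $00$ or $01$), and $s(n)=p+2$ when $y_n=1$, where $p=\min\{q>n\mid y_q=1\}$ is the position of the following $1$ (the first block is then $10^k10$ or $10^k11$). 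This is well defined because $1$ occurs infinitely often, and $s(n)>n$ always. The cut-point set of the factorisation of $T^n(y)$ is exactly the forward orbit $\{n,s(n),s^2(n),\dots\}$, so by uniqueness two indexed suffixes are equivalent in the sense of the definition preceding the statement precisely when their forward orbits meet, that is, when $n$ and $m$ lie in the same connected component of the functional graph $(\mathbb{N},s)$.

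The central structural step is to show that every vertex of this graph has \emph{in-degree at most one}. To this end I would compute the preimages of an integer $c\ge 2$: an edge $n\to c$ of "$0$-type" forces $n=c-2$ together with $y_{c-2}=0$, whereas an edge of "$1$-type" forces $y_{c-2}=1$ and $n$ to be the last $1$-position strictly before $c-2$. Since $y_{c-2}$ is either $0$ or $1$, exactly one of the two alternatives can occur, and each produces at most one preimage; hence the in-degree is at most one. As each vertex has out-degree exactly one and $s$ is strictly increasing (so there are no cycles), the functional graph is a disjoint union of forward-infinite paths, each of which has a unique minimal vertex. Therefore the number of equivalence classes equals the number of \emph{sources}, i.e. vertices of in-degree zero.

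It then remains to count the sources. The integers $0$ and $1$ are sources, having no preimage at all; and for $c\ge 2$ the discussion above shows that $c$ is a source exactly when $y_{c-2}=1$ and there is no $1$ before position $c-2$, that is, when $c-2$ is the position $p_1$ of the first $1$ of $y$. This yields the single additional source $c=p_1+2$. Thus the sources are exactly $\{0,1,p_1+2\}$, three distinct integers, and $y$ has exactly three equivalence classes of $\mathcal{B}$-factorisations of indexed suffixes.

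The main obstacle is the in-degree computation: one has to check carefully that the $0$-type and $1$-type preimages are mutually exclusive and individually unique, since it is precisely this fact that collapses the graph into a disjoint union of paths and forces the count to be finite and equal to the number of sources. Once this is secured, the identification of the three sources is immediate. A secondary point demanding care is the faithful translation of the paper's equivalence relation on factorisations into the relation "the forward orbits meet"; this rests only on the uniqueness in Lemma \ref{bfactorisationfini}.3, since any common cut point of two suffix factorisations forces them to share a common tail.
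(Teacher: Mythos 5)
Your proof is correct, but it takes a genuinely different route from the paper's. The paper applies Lemma \ref{bfactorisationfini} to the three words $y$, $1y$ and $11y$: part 3 produces their (unique) $\mathcal{B}$-factorisations, whose tails yield three $\mathcal{B}$-factorisations of indexed suffixes of $y$, and part 2, applied to prefixes of $y$, is invoked both to see that these three are pairwise non-equivalent and to see that every $\mathcal{B}$-factorisation of an indexed suffix is equivalent to one of them. You use only the uniqueness statement (part 3), encode all suffix factorisations into the successor map $s$ on $\mathbb{N}$, identify equivalence classes with connected components of the functional graph $(\mathbb{N},s)$, and count components by counting sources via the in-degree bound. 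Your in-degree computation is in effect a local substitute for part 2 of the lemma: where the paper proves part 2 by induction on the length of the word, you read off directly from the shape of $\mathcal{B}$ that the $0$-type and $1$-type predecessors of a cut point $c\geq 2$ are mutually exclusive and each unique, so that $c$ fails to have a predecessor only when $c-2$ is the first occurrence of $1$. What your approach buys is self-containedness (part 2 is never needed) together with explicit representatives of the three classes, namely the suffixes starting at indices $0$, $1$ and $p_1+2$ with $p_1$ the position of the first $1$; one checks that these are precisely the tails of the paper's factorisations of $y$, $11y$ and $1y$ respectively, so the two proofs produce the same classes. What the paper's route buys is brevity, since Lemma \ref{bfactorisationfini} is already available. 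Your closing remark is the right one to make explicit: the paper's equivalence is literally ``one factorisation is a tail of the other'', and it is the uniqueness of the factorisation of each suffix that turns a common cut point into a common tail, so that ``the forward orbits meet'' is indeed the same relation.
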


\begin{proof}
According to lemma \ref{bfactorisationfini}, the three words $y$, $1y$ and $11y$ admit a $\mathcal{B}$-factorisation. Write $y=u_1u_2u_3\cdots$, $1y=v_1v_2v_3\cdots$ and $11y=w_1w_2w_3\cdots$, where the words $(u_i)_{i\geq 1}$, $(v_i)_{i\geq 1}$ and $(w_i)_{i\geq 1}$ all belong to $\mathcal{B}$. The factorisations $(0,(u_i)_{i\geq 1})$, $(|v_1|,(v_i)_{i\geq 2})$ and $(|w_1|,(w_i)_{i\geq 2})$ are $\mathcal{B}$-factorisations of an indexed suffix of $y$, and are not equivalent from statement 2) of lemma \ref{bfactorisationfini} applied to a prefix of $y$. This same lemma guarantees that every $\mathcal{B}$-factorisation of an indexed suffix of $y$ is equivalent to one of this three $\mathcal{B}$-factorisations of an indexed suffix of $y$.
\end{proof}

\begin{theo}\label{deuxtorsion}
Let $\alpha$ be a slope with partial quotients $(a_i)_{i\geq 1}$, such that the coefficient $(a_i)_{i\geq 1}$ are not eventually even. Then there exists exactly three non-zero equivalence classes of $\alpha$-numbers of the slope $\alpha$ equivalent to their complement.
\end{theo}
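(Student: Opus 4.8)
The plan is to set up a bijection between the non-zero equivalence classes of $\alpha$-numbers equivalent to their complement and the equivalence classes of $\mathcal{B}$-factorisations of indexed suffixes of the parity word, and then simply read off the count from Proposition \ref{suffixeindexe}. First I would fix the parity word $y=y_2y_3y_4\cdots$ defined by $y_i=1$ when $a_i$ is odd and $y_i=0$ when $a_i$ is even. The hypothesis that $(a_i)_{i\geq 1}$ is not eventually even says exactly that the letter $1$ occurs infinitely often in $y$, so Proposition \ref{suffixeindexe} applies and gives that $y$ has exactly three equivalence classes of $\mathcal{B}$-factorisations of indexed suffixes. The entire content of the theorem is then to transport this count of three across a correspondence with self-complementary $\alpha$-numbers.

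Next I would build the correspondence $\Phi$ sending a $\mathcal{B}$-factorisation $(n,(u_j)_{j\geq1})$ of $T^n(y)=u_1u_2u_3\cdots$ to an $\alpha$-number $\rho$, by prescribing the Ostrowski digits $(b_{i+1})$ block by block. Each block of length $L$ is read as a jump of $L$ in the continuant index: a block in $\{00,01\}$ (length $2$, with $a_{i+2}$ even) is handled by the first relation $q_{i+2}-q_i=a_{i+2}q_{i+1}$, assigning the half-digit $a_{i+2}/2$ at position $i+1$; a block $10^k10$ or $10^k11$ (length $k+3$, with $a_{i+2}$ and $a_{i+3+k}$ odd and the intermediate $a$'s even) is handled by the second relation, whose right-hand side $(a_{i+3+k}-1)q_{i+2+k}+\sum_{l=1}^k a_{i+2+l}q_{i+1+l}+(a_{i+2}+1)q_{i+1}$ distributes digits symmetrically across the odd--even--odd run. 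The two families making up $\mathcal{B}$ are tailored precisely so that within each block the assigned digits are the ``half'' of a continuant difference, which is the local shape of a self-complementary number, in exact analogy with the eventually-even case treated in Proposition \ref{complementpair}.

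Then I would verify that $\rho=\Phi(n,(u_j))$ is genuinely equivalent to its complement. Using the complement formula of Proposition \ref{interceptproduit}, I would evaluate $\Psi_n(q_{\Lambda_\rho(n)+1}-2-\rho_{\Lambda_\rho(n)+1})$ at support points lying at the block boundaries; the two continuant identities are exactly what is needed to show that this reproduces, up to a fixed shift, the same digit pattern as $\rho$, so that $\overline{\rho}$ and $\rho$ agree in all sufficiently large digits and hence are equivalent by Proposition \ref{interceptidentification}. I would also check that $\rho$ has non-zero equivalence class: since $1$ occurs infinitely often in $y$, the factorisation contains infinitely many blocks $10^k10$ or $10^k11$, so the digit pattern of $\rho$ matches none of the three degenerate patterns of Proposition \ref{classenulle}; this legitimises applying the involution of Theorem \ref{prefixesuffixesturmien}.

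Finally I would show that $\Phi$ descends to a bijection on equivalence classes. Equivalent factorisations differ by a shift of the index $n$ together with the deletion of the first few blocks, which changes $\rho$ only by adding an integer, hence moves it inside the same class by Proposition \ref{interceptincrementation}; conversely, non-equivalent factorisations yield non-equivalent $\rho$ by Proposition \ref{interceptidentification}. For completeness, any self-complementary $\rho$ with non-zero class has, by the equality $\rho\equiv\overline{\rho}$ read through the two continuant identities, a digit pattern that parses uniquely (Lemma \ref{bfactorisationfini}) into blocks of $\mathcal{B}$, producing a factorisation mapped to it by $\Phi$; so $\Phi$ is onto the classes. The count of three then transfers directly from Proposition \ref{suffixeindexe}, proving the theorem. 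The main obstacle will be the block-by-block bookkeeping of the digit assignment together with the verification that the two continuant identities make each block self-complementary --- in particular getting the index alignment and the half-digit prescription correct at the boundary between an even run and an odd--even--odd run --- and, on the completeness side, showing that the relation $\rho\equiv\overline{\rho}$ forces a parse into exactly the blocks of $\mathcal{B}$ and into no others.
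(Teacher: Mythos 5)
Your construction of the three classes is essentially the paper's: you read the parity word $y$, apply Proposition \ref{suffixeindexe} to get three equivalence classes of $\mathcal{B}$-factorisations of indexed suffixes, use the two continuant identities to turn each block into a ``half'' of a continuant difference $(q_{c_{j+1}}-q_{c_j})/2$, sum these to get an $\alpha$-number, check self-complementarity via Proposition \ref{interceptproduit} at the block boundaries, and transport equivalence in both directions using Propositions \ref{interceptincrementation} and \ref{interceptidentification}. All of that matches the paper's argument step for step.

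The gap is in your completeness (surjectivity) step. You claim that for an arbitrary self-complementary $\rho$ of non-zero class, the relation $\rho\equiv\overline{\rho}$ ``read through the two continuant identities'' forces the Ostrowski digits of $\rho$ to parse into blocks of $\mathcal{B}$ with the prescribed half-digit values, and you yourself flag this as the main obstacle --- but you give no argument for it, and it is not a routine verification. The complement is defined through $\Psi_n(q_{\Lambda_\rho(n)+1}-2-\rho_{\Lambda_\rho(n)+1})$, which involves the support function $\Lambda_\rho$ and reductions modulo $q_n$; extracting from the asymptotic equality of digits of $\rho$ and $\overline{\rho}$ the precise local structure ``digit equals $a_{i+2}/2$ on even runs, equals $(a_{i+3+k}-1)/2$, $a_{i+2+l}/2$, $(a_{i+2}+1)/2$ on odd--even--odd runs'' is a substantial combinatorial analysis that your proposal does not carry out, and Lemma \ref{bfactorisationfini} only gives uniqueness of a $\mathcal{B}$-parse once you already know the digit pattern has that shape. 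The paper avoids this entirely: it bounds the number of self-complementary classes above by three using the Droubay--Pirillo theorem that a sturmian word has exactly one palindromic factor of each even length and exactly two of each odd length, observing via Theorem \ref{complementairetheoreme} that a class equivalent to its complement corresponds to a sturmian orbit of the form $\widetilde{x}\cdot u\cdot x$, i.e. to a nested chain of central palindromic factors, and these chains are exactly accounted for by the even-length palindromes (one chain) and the odd-length palindromes (two chains). Either supply the digit-parsing argument in full or replace it by the palindrome-counting bound; as written, the upper bound of three is not established.
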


\begin{proof}
We denote by $y=\overline{a_1}\overline{a_2}\overline{a_3}\cdots$ the infinite word whose $i$-th letter is the reduction modulo $2$ of the coefficient $a_i$, meaning that we denote, for $k\geq 1$, $\overline{k}=0$ if $k$ is even, and $\overline{k}=1$ if $k$ is odd. By assumption, in $y$ the letter 1 appears infinitely many times, and by the proposition \ref{suffixeindexe}, $y$ admits exactly three equivalence classes of $\mathcal{B}$-factorisations of an indexed suffix.

We will build the desired equivalence classes of $\alpha$-numbers associated to the slope $\alpha$ from the equivalence classes of $\mathcal{B}$-factorisations of an indexed suffix of $y$.

Let $i\geq 1$ and $u=\overline{a_{i+2}}\overline{a_{i+3}}$ be a factor of $y$ with length 2, such that $u\in \mathcal{B}$, meaning that $u=00$ or $u=01$, which means that $a_{i+2}$ is even. In this case we use the formula $q_{i+2}-q_{i}=a_{i+2}q_{i}$, that we write as the equality between integers
\begin{center}
	$\displaystyle \frac{q_{i+2}-q_{i}}{2}= \frac{a_{i+2}}{2}q_{i+1}$.
\end{center}
We now consider a factor $u=\overline{a_{i+2}}\overline{a_{i+3}}\cdots \overline{a_{i+2+k}}\overline{a_{i+3+k}}\overline{a_{i+4+k}}$ of $y$ with $|u|\geq 3$ and $k\geq 0$, such that $u\in \mathcal{B}$, meaning that $u=10^k10$ or $u=10^k11$. We use in this case the formula
\begin{center}
	$\displaystyle q_{i+3+k}-q_{i}=(a_{i+3+k}-1)q_{i+2+k}+\sum_{l=1}^ka_{i+2+l}q_{i+1+l}+(a_{i+2}+1)q_{i+1}$
\end{center}
that we write as the following equality, the right-hand side being written in the Ostrowski numeration system :
\begin{center}
	$\displaystyle \frac{q_{i+3+k}-q_{i}}{2}=\frac{a_{i+3+k}-1}{2}q_{i+2+k}+\sum_{l=1}^k\frac{a_{i+2+l}}{2}q_{i+1+l}+\frac{a_{i+2}+1}{2}q_{i+1}$.
\end{center}

Let now be $(n,(u_j)_{j\geq 1})$ a $\mathcal{B}$-factorisation of an indexed suffix of $y$. We write $T^n(y)=u_1u_2u_3\cdots= \overline{a_{n+1}}\overline{a_{n+2}}\overline{a_{n+3}}\cdots$, and we denote by  $(c_j)_{j\geq 1}$ the sequence of indexes where the terms of the factorisation appear, meaning that $c_j=n+1+|u_1u_2\cdots u_{j-1}|$ and so that $T^{c_j}(y)=u_{j}u_{j+1}u_{j+2}\cdots$. We saw that the integers, for all $1\leq j_1 < j_2$,
\begin{center}
	$\displaystyle \frac{q_{c_{j_2}}-q_{c_{j_1}}}{2}=\sum_{h={j_1}}^{{j_2}-1}\frac{q_{c_{h+1}}-q_{c_h}}{2}$
\end{center}
have their supports in the Ostrowski numeration system contained in the intervals $[c_{{j_1}}, c_{{j_2}}[$. This means, according to definition \ref{interceptdef}, that the infinite sum
\begin{center}
	$\displaystyle \rho =\sum_{j=1}^{+\infty}\frac{q_{c_{j+1}}-q_{c_j}}{2}$
\end{center}
defines an $\alpha$-number. This $\alpha$-number, constructed from a $\mathcal{B}$-factorisation of an indexed suffix of $y$, defines a sturmian word from theorem \ref{interceptsturmien}, obtained by glueing integers of the form $(q_{c_{j+1}}-q_{c_j})/2$, which is a legal operation in view of their support in the Ostrowski numeration system.

Two $\alpha$-numbers constructed that way are equivalent in the sense of definition \ref{interceptequivalent}  if and only if the corresponding $\mathcal{B}$-factorisations of an indexed suffix of $y$ are equivalent. By re-using the above notations, the sequence $(c_j)_{j\geq 1}$ can be recovered from the $\alpha$-number
\begin{center}
	$\displaystyle \rho =\sum_{k=1}^{+\infty}\frac{q_{c_{k+1}}-q_{c_k}}{2}$
\end{center}
written as $\rho=\sum_{i\geq 0}b_{i+1}q_i$ by consideration of the indexes $i\geq 1$ such that $b_{i+1}=(a_{i+1}-1)/2$, since these indexes correspond exactly to the indexes of the letter before last of the factors $u_j\in \mathcal{B}$ of the form $u=10^k10$ or $u=10^k11$ for some $k\geq 0$. The position of these indexes determines the equivalence class of a $\mathcal{B}$-factorisation of an indexed suffix of $y$, and we deduce from this that two such $\mathcal{B}$-factorisations that are not equivalent lead to the construction of two $\alpha$-numbers that are not equivalent. Conversely, two $\mathcal{B}$-factorisations of an indexed suffix of $y$ that are equivalent lead to $\alpha$-numbers defined by infinite sums with terms eventually equal. In view of the support of these terms in the Ostrowski numeration system, the $\alpha$-numbers constructed will have coefficients eventually equal, and hence will be equivalent in view of proposition \ref{interceptidentification}.

We hence have constructed 3 non-zero classes of $\alpha$-numbers, and we have to show now that they are equivalent to their complement, and we keep the notations introduced during this proof. Since the support in the Ostrowski numeration system of the integer  $(q_{c_{k+1}}-q_{c_k})/2$  for $k\geq 1$ contains the index $c_{k}+1$, we have $\Lambda_{\rho}(c_{k}-1)=c_k-1$, and we compute, with $k\geq 2$,
\begin{align*}
	\displaystyle \Psi_{c_{k-1}}(q_{c_k}-2- \Psi_{c_k}(\rho))&=\Psi_{c_{k-1}}\left(q_{c_k}-2-\sum_{l = 1}^{k-1}\frac{q_{c_{l+1}}-q_{c_l}}{2}\right) =\Psi_{c_{k-1}}\left(q_{c_k}-2-\frac{q_{c_k}-q_{c_1}}{2}\right) \\ &=\Psi_{c_{k-1}}\left(\frac{q_{c_k}-q_{c_1}}{2}+q_{c_1}-2\right)=\frac{q_{c_{k-1}}-q_{c_1}}{2}+q_{c_1}-2 \\ &=q_{c_1}-2 + \sum_{l=1}^{k-2}\frac{q_{c_{l+1}}-q_{c_l}}{2}
\end{align*}
in view of the support in the Ostrowski numeration system of the involved integers. This shows that the $\alpha$-numbers constructed are equivalent to their complement.

To see that they are the only ones, we use a known result as reference, stating that a sturmian word always have exactly one palindromic factor of any given even length, and exactly two palindromic factors of odd length, according to \cite{doubray}. For all $n$, there exists exactly one factor of $c_\alpha$ of length $2n$ that writes in the form $\widetilde{v_n}v_n$ where $v_n$ is a finite word. By unicity, these factors $(v_n)_{n\geq 1}$ are prefix of one another and hence define a sturmian word $x$ such that the bi-infinite word $\widetilde{x}\cdot x$ is a sturmian orbit. In view of our results in theorem \ref{complementairetheoreme} about the complement of an $\alpha$-number, the formal intercept associated to $x$ is equivalent to its complement. A similar argument applies for the set of palindromic factors of $c_\alpha$ of odd length. In fact, our constructions allow the explicit determination and construction of the formal intercepts associated to the sturmian words $x$ such that there exists a finite word $u$ such that the bi-infinite word  $\widetilde{x}\cdot u \cdot x$ is a sturmian orbit.
\end{proof}

We aim now to generalise this construction, but for the operation of division by an integer $N\geq 2$, the construction above being the division by $N=2$. Although we keep the path of the construction above, we have to keep in mind that we do not have any result about factorisations and complement of $\alpha$-numbers in this situation.

\begin{lem}\label{controle}
Let $(b_i)_{i=N+1}^{M+1}$ be a sequence of non-negative integers such that $b_{i+1}\leq a_{i+1}$ for all $N\leq i \leq M$. Then the integer
\begin{center}
	$\displaystyle n=\sum_{i=N}^Mb_{i+1}q_i$
\end{center} 
admits an expansion in the Ostrowski numeration system whose support is contained in the integer interval $[N,M+1]$, and the coefficient attached to $q_{M+1}$ equals at most 1.
\end{lem}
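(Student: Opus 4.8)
The plan is to control $n$ by bounding its size with a telescoping identity and then reading off the Ostrowski expansion from the uniqueness statement recalled before Definition \ref{interceptdef}. First I would bound $n$ from above by replacing every coefficient by its maximal value $a_{i+1}$ and using the fundamental recurrence in the form $a_{i+1}q_i = q_{i+1}-q_{i-1}$, which makes the sum telescope:
\begin{equation*}
n \le \sum_{i=N}^M a_{i+1}q_i = \sum_{i=N}^M (q_{i+1}-q_{i-1}) = q_{M+1}+q_M-q_N-q_{N-1}.
\end{equation*}
Since $q_N\ge 1$ and $q_{N-1}\ge 0$, this yields $n < q_{M+1}+q_M$, and then both $n<q_{M+2}$ and $n<2q_{M+1}$, the former using $q_{M+2}=a_{M+2}q_{M+1}+q_M\ge q_{M+1}+q_M$ and the latter using $q_M\le q_{M+1}$.

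From $n<q_{M+2}$ and the uniqueness of the Ostrowski expansion, $n$ writes uniquely as $n=\sum_{i=0}^{M+1}e_{i+1}q_i$ with $(e_i)$ satisfying the Ostrowski conditions, so its support is already contained in $[0,M+1]$. Writing $e_{M+2}$ for the coefficient of $q_{M+1}$, the non-negativity of all digits gives $e_{M+2}q_{M+1}\le n<2q_{M+1}$, hence $e_{M+2}\le 1$; this is the assertion on the top coefficient. It then remains only to establish the lower bound on the support, i.e. $e_{i+1}=0$ for every $i<N$.

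For the lower bound I would argue by induction on $M-N$, peeling off the bottom term. Write $n=b_{N+1}q_N+n''$ with $n''=\sum_{i=N+1}^M b_{i+1}q_i$; by the induction hypothesis $n''$ has an Ostrowski expansion supported in $[N+1,M+1]$, so all its digits at positions $\le N$ vanish. Adjoining the single digit $b_{N+1}\le a_{N+1}$ at position $N$ produces a (possibly non-admissible) writing of $n$ whose only possible failures of the Ostrowski conditions sit at positions $\ge N$ (at $N+1$, when the digit there already equals $a_{N+2}$ and the new digit at $N$ is positive, and, if $N=0$, possibly at position $0$). I would then normalise using the carry coming from $a_{k+1}q_k+q_{k-1}=q_{k+1}$, together with $(a_{k+1}+1)q_k=q_{k+1}+(a_k-1)q_{k-1}+q_{k-2}$ whenever a digit overflows its bound. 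The crucial point is that with the tail already admissible, the digit at position $N$ is only ever decremented, so no carry is ever triggered at positions $N$ or $N-1$; consequently every carry deposits mass only at indices $\ge N$ and no nonzero digit is created below position $N$. Termination is guaranteed because every intermediate writing represents the fixed number $n<q_{M+2}$ and is therefore supported in the finite set $[N,M+1]$ with bounded digits, while each carry strictly increases the weighted digit sum $\sum_i e_{i+1}(D+1)^i$ for a suitably large constant $D$. By uniqueness, the admissible writing thus reached is the Ostrowski expansion of $n$, and it is supported in $[N,M+1]$.

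The routine part is the telescoping bound and its two consequences. The main obstacle is the lower-bound step: one must set up the normalisation so that it provably terminates and, above all, never carries below position $N$. The cleanest way to secure the latter is the observation highlighted above — that with the tail in canonical form the digit at position $N$ can only shrink, so the two continuant identities used for carrying always deposit at indices $\ge N$ — which keeps the support bounded below by $N$ throughout the procedure.
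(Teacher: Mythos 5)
Your proof is correct, and its first half coincides exactly with the paper's: the telescoping bound $n\le q_{M+1}+q_M-q_N-q_{N-1}<q_{M+1}+q_M$ yields both $n<q_{M+2}$ (hence, by uniqueness, support contained in $[0,M+1]$) and $n<2q_{M+1}$ (hence top coefficient at most $1$). For the lower bound on the support you take a genuinely different route. The paper inducts on the number of saturated digits $b_{i+1}=a_{i+1}$ and resolves the \emph{topmost} one $i_0$: the induction hypothesis, applied to $\sum_{i=N}^{i_0-2}b_{i+1}q_i$ or to $(b_{i_0}-1)q_{i_0-1}+\sum_{i=N}^{i_0-2}b_{i+1}q_i$, controls the digit at position $i_0-1$, so that each step requires only the single cancellation $q_{i_0+1}=a_{i_0+1}q_{i_0}+q_{i_0-1}$ (possibly one more just above), maximality of $i_0$ keeps the incremented digit within its bound, and no termination argument is needed. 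You instead induct on $M-N$, peel off the \emph{bottom} digit, and renormalise with an upward cascade of carries, which obliges you to supply a termination monovariant and the invariant that no carry creates mass below $N$; you get both right --- a carry at position $k$ requires a positive digit at $k-1$, all digits below $N$ remain zero apart from the one decrement at $N$ itself, and the upper bound on the final support is inherited from the first part by uniqueness. One remark: the reserve identity $(a_{k+1}+1)q_k=q_{k+1}+(a_k-1)q_{k-1}+q_{k-2}$ is in fact never needed in your scheme, since a digit is only ever incremented when the digit beneath it is saturated and positive, which by admissibility of the not-yet-modified tail forces the incremented digit to have been strictly below its bound; it is worth saying this explicitly, because that identity deposits mass two positions down and, were it ever invoked at position $N$ or $N+1$, it would destroy the lower bound on the support. (Like the paper, you leave aside the boundary convention $b_1\le a_1-1$ at position $0$, which is harmless since the lemma is only applied with $N$ large.)
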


\begin{proof}
The upper bound on the support and on the value of the coefficient attached to $q_{M+1}$ comes from the inequality
\begin{center}
	$\displaystyle n=\sum_{i=N}^Mb_{i+1}q_i \leq \sum_{i=N}^Ma_{i+1}q_i= q_{M+1}+q_M-(q_{N}+q_{N-1})< q_{M+1}+q_M$
\end{center}
which implies that the highest possible term cannot be greater than $q_{M+1}$.

For the lower bound on the support, we proceed by induction on the number of indexes $N\leq i \leq M$ such that $b_{i+1}=a_{i+1}$. If there are none, then $n$ is already written in the Ostrowski numeration system and we are done. Otherwise, we consider the largest of these indexes, denoted by $i_0$, for which we have by definition $b_{i_0+1}=a_{i_0+1}$.

If $b_{i_0}=0$, then we can apply the induction hypothesis to the integer $\sum_{i=N}^{i_0-2}b_{i+1}q_i$, whose support is then contained in the integer interval $[N,i_0-1]$, with a coefficient in front of $q_{i_0-1}$ that equals at most 1. If this coefficient is zero, then we are done, and if it equals 1, then the cancellation $q_{i_0+1}=a_{i_0+1}q_{i_0}+q_{i_0-1}$ happens, and the coefficients appearing in front of $q_{i_0+1}$ and $q_{i_0}$ are respectively below $a_{i_0+2}$ and zero, which implies that the final expansion satisfies the Ostrowski conditions and the desired properties.

In the case where $b_{i_0}\neq0$, then we can directly proceed to the cancellation $q_{i_0+1}=a_{i_0+1}q_{i_0}+q_{i_0-1}$, and we get
\begin{center}
	$\displaystyle n= \sum_{i=i_0+2}^Mb_{i+2}q_i+(b_{i_0+2}+1)q_{i_0+1}+(b_{i_0}-1)q_{i_0-1}+\sum_{i=N}^{i_0-2}b_{i+1}q_i$
\end{center}
and $b_{i_0+2}+1\leq a_{i_0+2}$  by definition of $i_0$. By induction hypothesis, the integer $(b_{i_0}-1)q_{i_0-1}+\sum_{i=N}^{i_0-2}b_{i+1}q_i$ has a support in the Ostrowski numeration system is included in the integer interval $[N,i_0]$, and its coefficient in front of $q_{i_0}$ equals at most 1. The cancellation $q_{i_0+2}=a_{i_0+2}q_{i_0+1}+q_{i_0}$ can still happen, but in view of the definition of $i_0$, we obtain a support in the Ostrowski numeration system of the integer $n$ that is included in  $[N,M+1]$.
\end{proof}

We consider, for any integer $a$, the matrix
\begin{center}
$A_a=\begin{pmatrix}
	a & 1 \\
	1 & 0
\end{pmatrix}$
\end{center}
so that the continuants $(q_n)_{n\geq -1}$ are bounded to the relation
\begin{align*}
	\begin{pmatrix}
	q_{n+1} \\
	q_n
\end{pmatrix}&= A_{a_{n+1}}
\begin{pmatrix}
	q_{n} \\
	q_{n-1}
\end{pmatrix}\\&= A_{a_{n+1}}A_{a_{n}}A_{a_{n-1}}\cdots A_{a_{1}}\begin{pmatrix}
	1 \\
	0
\end{pmatrix}.
\end{align*}

We fix now an integer $N\geq 2$, and we are going to study the sequence of continuants taken modulo $N$. For $k$ an integer, we denote by $\overline{k}$ its image in $\mathbb{Z}/N\mathbb{Z}$, and we set
\begin{center}
	$A_{\overline{k}}$
\end{center}
the reduction of the matrix $A_k$ in $\mathbb{Z}/N\mathbb{Z}$. We consider the group $G$ generated by the matrices $A_{\overline{k}}$ in $GL_2(\mathbb{Z}/N\mathbb{Z})$. According to the relations
\begin{center}
	$A_0A_1A_{-1}A_0A_{-1}^{-1}A_0=\begin{pmatrix}
	0 & -1 \\
	1 & 0
\end{pmatrix}$ \quad and \quad $A_1A_0=\begin{pmatrix}
	1 & 1 \\
	0 & 1
\end{pmatrix}$
\end{center}
those two matrices being, when seen with coefficients in $\mathbb{Z}$, classical generators of $SL_2(\mathbb{Z})$, and according to the surjectivity of the reduction map $SL_2(\mathbb{Z})\longrightarrow SL_2(\mathbb{Z}/N\mathbb{Z})$ (see the first chapter of \cite{diamond}), added to the fact that the matrices $A_{\overline{k}}$ have determinant $-1$, we see that
\begin{center}
	$\displaystyle G=\left\langle A_{\overline{k}}\right\rangle_{\overline{k}\in \mathbb{Z}/N\mathbb{Z}}=\{A\in GL_2(\mathbb{Z}/N\mathbb{Z}) \ | \ \text{det}A=\pm 1\}$.
\end{center}

We consider the following graph, who encodes the action of $G$ endowed with the generators $(A_{\overline{k}})_{\overline{k}\in \mathbb{Z}/N\mathbb{Z}}$ on the orbits of its natural action on the column vector $\begin{pmatrix}	1 \\ 0 \end{pmatrix}$. It is the graph with set of vertices $V$ as the set of column vectors $\begin{pmatrix}	u \\ v \end{pmatrix}$ such that there exists a matrix $A$ of $G$ such that $\begin{pmatrix}	u \\ v \end{pmatrix}=A\begin{pmatrix}	1 \\ 0 \end{pmatrix}$, and whose directed edges link two column vectors related by a generator of $G$ from the family $(A_{\overline{k}})_{\overline{k}\in \mathbb{Z}/N\mathbb{Z}}$ :
\begin{center}
	$\begin{pmatrix}	u_1 \\ v_1 \end{pmatrix} \stackrel{A_{\overline{k}}}{\longrightarrow} \begin{pmatrix}	u_2 \\ v_2 \end{pmatrix}$ \quad if and only if \quad $\begin{pmatrix}	u_2 \\ v_2 \end{pmatrix}=A_{\overline{k}} \begin{pmatrix}	u_1 \\ v_1 \end{pmatrix}$.
\end{center}

This graph, obviously finite, is part of the setup for the proof of our next (and last) result. It is the Cayley graph of the action of $G$ on $V$ with respect to the generators $(A_{\overline{k}})_{\overline{k}\in \mathbb{Z}/N\mathbb{Z}}$. This graph allows us to use the point of view of automatons to our situation. We consider the alphabet $\mathcal{A}=\mathbb{Z}/N\mathbb{Z}$, and the graph presented above as an automaton whose initial state and final state equal both the column vector $\begin{pmatrix}	1 \\ 0 \end{pmatrix}$. We consider the set $\Gamma$ of words accepted by this automaton :
\begin{center}
	$\displaystyle \Gamma=\left\{ u=u_1u_2u_3\cdots u_n \in  (\mathbb{Z}/N\mathbb{Z})^+ \ \left| \ A_{\widetilde{u}}\begin{pmatrix}	1 \\ 0 \end{pmatrix} =A_{u_n}A_{u_{n-1}}\cdots A_{u_1} \begin{pmatrix}	1 \\ 0 \end{pmatrix} = \begin{pmatrix}	1 \\ 0 \end{pmatrix} \right.\right\}$
\end{center}
where, for a finite word $v=v_1v_2\cdots v_n$ over $\mathcal{A}_N=\mathbb{Z}/N\mathbb{Z}$,
\begin{center}
	$A_{v}=A_{v_1}A_{v_2}\cdots A_{v_n}$.
\end{center}
Once this setup is cleared we can state and proof the following result.

\begin{theo}\label{torsion}
Let $\alpha$ be a slope of continuants $(q_n)_{n\geq -1}$. Then for all $N\geq 2$, there exists a rank $n_0$ from which for all $n\geq n_0$ there exists an integer $k\geq 2$ such that $N$ divides $q_{n+k}-q_n$ and such that the quotient $(q_{n+k}-q_n)/N$ has a support in the Ostroski number system satisfying :
\begin{center}
	$\displaystyle \textup{Supp} \left(\frac{q_{n+k}-q_n}{N}\right)\subset ]n,n+k[$.
\end{center}
\end{theo}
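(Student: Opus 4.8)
The plan is to reduce everything modulo $N$, exploit the finiteness of the group $G$ to produce the index $k$ together with the divisibility, and then control the Ostrowski support of the quotient by combining the two continuant identities displayed above with Lemma \ref{controle}. Throughout I track the reduced column vectors $V_m=\begin{pmatrix}\overline{q_m}\\ \overline{q_{m-1}}\end{pmatrix}\in V$, which evolve by the invertible rule $V_{m+1}=A_{\overline{a_{m+1}}}V_m$ inside the finite vertex set $V$ of the Cayley graph.

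First I would settle the existence of $k$ and the divisibility. Since $V$ is finite, only finitely many states fail to be visited infinitely often along the run $V_0,V_1,V_2,\dots$; let $n_0$ be an index past which every $V_m$ is recurrent. For $n\ge n_0$ the state $V_n$ is visited again infinitely often, so one may choose a closed walk in the Cayley graph returning to $V_n$; composing two such returns if necessary, I may take the gap $k\ge 2$ and even. Then $V_{n+k}=V_n$, whence $\overline{q_{n+k}}=\overline{q_n}$ and therefore $N\mid q_{n+k}-q_n$. (This is the role of the base point $\begin{pmatrix}1\\0\end{pmatrix}$ and of the accepted language $\Gamma$: they record precisely when the reduced continuant closes up modulo $N$.)

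Next comes the support containment, which is the arithmetic core. The upper end is free: since $N\ge 2$ and $q_n\ge 1$, the quotient $m=(q_{n+k}-q_n)/N$ satisfies $m<q_{n+k}$, so its canonical expansion involves no $q_i$ with $i\ge n+k$. For the lower end I would begin from the telescoped identity $q_{n+k}-q_n=\sum_{s=1}^{k/2}a_{n+2s}\,q_{n+2s-1}$ (obtained by summing $q_{j+2}-q_j=a_{j+2}q_{j+1}$), which is already in canonical form with support inside $]n,n+k[$, and then perform the division by $N$ blockwise. Concretely, I would cut $[n,n+k]$ into consecutive primitive blocks $[i_j,i_{j+1}]$ on which the continuant returns modulo $N$, and on each block rewrite $(q_{i_{j+1}}-q_{i_j})/N$ using the two continuant identities so that it becomes a sum $\sum_i c_{i+1}q_i$ with $0\le c_{i+1}\le a_{i+1}$ and support inside the \emph{open} interval $]i_j,i_{j+1}[$, exactly as in the $N=2$ construction of Theorem \ref{deuxtorsion}. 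Because the block supports lie in disjoint open intervals separated by the breakpoints $i_j$, the digit at each $i_j$ remains $0$, so the blockwise expansions concatenate into a single admissible expansion of $m$ with support in $]n,n+k[$; Lemma \ref{controle} then guarantees that passing to canonical form does not enlarge the support beyond $[n+1,n+k]$, and $m<q_{n+k}$ excludes the endpoint $n+k$.

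The hard part will be the construction of these primitive divisible blocks for general $N$. Unlike the case $N=2$, where the parity of the partial quotients dictates a purely local rewriting (the coefficients $\frac{a-1}{2},\frac{a}{2},\frac{a+1}{2}$ of Theorem \ref{deuxtorsion}), for arbitrary $N$ one must control how carries propagate when dividing $q_{i_{j+1}}-q_{i_j}$ by $N$ and keep every intermediate coefficient in the range $[0,a_{i+1}]$, so that no digit escapes the interior of its block and Lemma \ref{controle} applies. This is exactly what the group $G=\{A\in GL_2(\mathbb{Z}/N\mathbb{Z})\mid \det A=\pm 1\}$ and the automaton are designed to supply: the return condition $V_{n+k}=V_n$ encodes the modular closure, while the $\Gamma$-type factorisation of the reduced word $\overline{a_{n+1}}\cdots\overline{a_{n+k}}$ furnishes the block boundaries along which the division is carried out without leakage. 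Verifying that this factorisation yields in-range coefficients and confines every carry to the interior of each block is where the bulk of the technical work will lie.
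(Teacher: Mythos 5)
Your first half --- producing $k$ with $N \mid q_{n+k}-q_n$ by tracking the reduced vectors in the finite set $V$ and restricting to recurrent states --- is sound and is essentially the paper's pigeonhole argument on the sequence $\bigl(A_{\mathbb{P}_n(y)}\left(\begin{smallmatrix}1\\0\end{smallmatrix}\right)\bigr)_{n\ge 1}$. But the support containment, which you correctly call the arithmetic core, is not proved: your plan is to cut $[n,n+k]$ into \enquote{primitive blocks on which the continuant returns modulo $N$} and to divide blockwise so that each block's quotient has support in the interior of its block, and you then state that verifying the in-range coefficients and the confinement of carries \enquote{is where the bulk of the technical work will lie.} That verification \emph{is} the theorem: the claim that $(q_{i_{j+1}}-q_{i_j})/N$ has support inside $]i_j,i_{j+1}[$ is exactly the statement to be proved on a shorter window, so without an explicit construction the blockwise step is circular. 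Unlike the case $N=2$, where the set $\mathcal{B}$ and the two displayed continuant identities give a purely local rewriting with coefficients $\frac{a-1}{2},\frac{a}{2},\frac{a+1}{2}$, you exhibit no such local dictionary for general $N$.

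The paper avoids blocks entirely and performs the division in a single backwards Euclidean cascade. Writing $R_N(k)$ for the remainder of $k$ modulo $N$ and setting $r_0=1$, $r_1=R_N(a_n)$, $r_{i+1}=R_N(a_{n-i}r_i+r_{i-1})$, one has for every $k$ the exact identity
\begin{center}
	$\displaystyle q_n= N \left\lfloor \frac{a_n}{N} \right\rfloor q_{n-1}+ \sum_{i=1}^{k}N \left\lfloor \frac{a_{n-i}r_i+r_{i-1}}{N}\right\rfloor q_{n-i-1} + r_{k+1}q_{n-k-1}+ r_{k}q_{n-k-2}$,
\end{center}
where $\left(\begin{smallmatrix}r_{k+1}\\ r_k\end{smallmatrix}\right)=A_{\overline{a_{n-k}}}\cdots A_{\overline{a_{n}}}\left(\begin{smallmatrix}1\\0\end{smallmatrix}\right)$. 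The recurrence argument is used to find $k$ with $(r_{k+1},r_k)=(0,1)$, so the tail collapses to $q_{n-k-2}$ and the quotient $(q_n-q_{n-k-2})/N$ is read off with coefficients $\left\lfloor (a_{n-i}r_i+r_{i-1})/N\right\rfloor\le a_{n-i}$ (because $r_i,r_{i-1}<N$); Lemma \ref{controle} together with the trivial bound $(q_n-q_{n-k-2})/N<q_n$ then gives the support containment in one stroke. Note also that your return condition $V_{n+k}=V_n$ with $V_m=\left(\begin{smallmatrix}\overline{q_m}\\ \overline{q_{m-1}}\end{smallmatrix}\right)$ is not the condition this cascade needs: what must close up is the residue pair of the \emph{backwards} recursion, i.e.\ the reversed product over the window must fix $\left(\begin{smallmatrix}1\\0\end{smallmatrix}\right)$, which is what the paper's cancellation of a common prefix in $A_{\mathbb{P}_n(y)}\left(\begin{smallmatrix}1\\0\end{smallmatrix}\right)=A_{\mathbb{P}_{m}(y)}\left(\begin{smallmatrix}1\\0\end{smallmatrix}\right)$ delivers. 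Your condition does yield the divisibility, but it would not feed the division cascade, so even the corrected programme would need this adjustment.
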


\begin{proof}
We denote by $(a_i)_{i\geq 1}$ the sequence of partial quotients of the slope $\alpha$, and set $y=\overline{a_1}\overline{a_2}\overline{a_3}\cdots$, which is an infinite word over $\mathcal{A}_N=\mathbb{Z}/N\mathbb{Z}$.

We consider the maps $R_N : \mathbb{N} \longrightarrow [0, N[$, who associate to an integer $k$ the remainder of the Euclidean division of $k$ by $N$, so that 
\begin{center}
	$\displaystyle k=\left\lfloor \frac{k}{N}\right\rfloor N+R_N(k)$.
\end{center}
We are going to use this notation in parallel of the notation $\overline{k}$. We give priority to the notation $R_N(k)$ when we are working with positive integers and consider Euclidean divisions, and we give priority to the notation $\overline{k}$ when we use the ring structure of $\mathbb{Z}/N\mathbb{Z}$, or group-theoretic arguments applied to matrices.

Let $n\geq 1$ be an integer, that we can consider as arbitrary large for the following computations. We write :
\begin{align*}
q_n &=a_nq_{n-1}+q_{n-2} \\ &= (\left\lfloor a_n /N \right\rfloor N+R_N(a_n))q_{n-1}+q_{n-2} \\ &= (a_n- R_N(a_n))q_{n-1}+ (a_{n-1}R_N(a_n)+1)q_{n-2}+ R_N(a_n)q_{n-3} \\ &= (a_n- R_N(a_n))q_{n-1}+ (a_{n-1}R_N(a_n)+1- R_N(a_{n-1}R_N(a_n)+1))q_{n-2}+ \\ & \quad (R_N(a_n)+a_{n-2}R_N(a_{n-1}R_N(a_n)+1))q_{n-3}+R_N(a_{n-1}R_N(a_n)+1)q_{n-4} \\ &= (a_n- R_N(a_n))q_{n-1}+ (a_{n-1}R_N(a_n)+1- R_N(a_{n-1}R_N(a_n)+1))q_{n-2}+ \\ & \quad (R_N(a_n)+a_{n-2}R_N(a_{n-1}R_N(a_n)+1)-R_N(R_N(a_n)+a_{n-2}R_N(a_{n-1}R_N(a_n)+1))q_{n-3}+\cdots
\end{align*}
so that if we consider the sequence $(r_i)_{i= 0}^{n-1}$ defined by the recurrence :
\begin{center}
	$r_0=1$, \quad $r_1=R_N(a_n)$, \quad  and \quad $r_{i+1}=R_N(a_{n-i}r_i+r_{i-1})$,
\end{center}
we have, for all $1\leq k \leq n-2$, the relation :
\begin{align*}
	\displaystyle q_n &=(a_n-r_1)q_{n-1}+\sum_{i=1}^{k-1}(a_{n-i}r_i+r_{i-1}-r_{i+1})q_{n-i-1} +(a_{n-k}r_{k}+r_{k-1})q_{n-k-1}+ r_{k}q_{n-k-2} \\ &= (a_n-r_1)q_{n-1}+\sum_{i=1}^{k}(a_{n-i}r_i+r_{i-1}-r_{i+1})q_{n-i-1} +r_{k+1}q_{n-k-1}+ r_{k}q_{n-k-2} \\ &= N \left\lfloor \frac{a_n}{N} \right\rfloor q_{n-1}+ \sum_{i=1}^{k}N \left\lfloor \frac{a_{n-i}r_i+r_{i-1}}{N}\right\rfloor q_{n-i-1} + r_{k+1}q_{n-k-1}+ r_{k}q_{n-k-2}.
\end{align*}
the sequence $(r_k)_{k=0}^{n-1}$, seen in $\mathbb{Z}/N\mathbb{Z}$, satisfies the relation :
\begin{align*}
	\begin{pmatrix}
	r_{k+1} \\
	r_k
\end{pmatrix} & = A_{\overline{a_{n-k}}}
\begin{pmatrix}
	r_{k} \\
	r_{k-1}
\end{pmatrix}\\&= A_{\overline{a_{n-k}}}A_{\overline{a_{n-k+1}}}A_{\overline{a_{n-k+2}}}\cdots A_{\overline{a_{n}}}\begin{pmatrix}
	1 \\
	0
\end{pmatrix}.
\end{align*}

We are going to show that we can find an integer $1\leq k \leq n-2$ such that $\begin{pmatrix}	r_{k+1} \\ r_k \end{pmatrix} = \begin{pmatrix}	0 \\ 1 \end{pmatrix}$, which is equivalent to
\begin{align*}
	A_{\overline{a_{n-k-1}}}
\begin{pmatrix}
	r_{k+1} \\
	r_{k}
\end{pmatrix}&=\begin{pmatrix}	r_{k+2} \\ r_{k+1} \end{pmatrix} \\&= \begin{pmatrix}	1 \\ 0 \end{pmatrix}.
\end{align*}
To show the existence of such a $k$, we use the point of view given by automatons presented above the statement of the theorem. The existence of such a $k$ is equivalent to the relation :
\begin{center}
	$A_{\overline{a_{n-k-1}}}A_{\overline{a_{n-k}}}A_{\overline{a_{n-k+1}}}A_{\overline{a_{n-k+2}}}\cdots A_{\overline{a_{n}}}\begin{pmatrix}
	1 \\
	0
\end{pmatrix}=\begin{pmatrix}
	1 \\
	0
\end{pmatrix}$.
\end{center} 
We then consider the sequence of column vectors :
\begin{center}
	$\displaystyle \left(A_{\mathbb{P}_n(y)}\begin{pmatrix}
	1 \\
	0
\end{pmatrix}\right)_{n\geq 1}=\left(A_{\overline{a_{1}}}A_{\overline{a_{2}}}A_{\overline{a_{3}}}\cdots A_{\overline{a_{n}}}\begin{pmatrix}
	1 \\
	0
\end{pmatrix} \right)_{n\geq 1}$
\end{center}
which is a sequence whose values lie in a finite set (with cardinality bounded above by $N^2$). There exists a rank $n_1\geq1$, from where this sequence only takes values that are attained infinitely many times. That is, there exists a rank $n_1\geq 1$, such that for all $n\geq n_0$, there exists an integer $m>n$ such that
\begin{align*}
	\displaystyle A_{\mathbb{P}_n(y)}\begin{pmatrix}
	1 \\
	0
\end{pmatrix}&=A_{\overline{a_{1}}}A_{\overline{a_{2}}}A_{\overline{a_{3}}}\cdots A_{\overline{a_{n}}}\begin{pmatrix}
	1 \\
	0
\end{pmatrix} \\&= A_{\mathbb{P}_{m}(y)}\begin{pmatrix}
	1 \\
	0
\end{pmatrix}\\&= A_{\overline{a_{1}}}A_{\overline{a_{2}}}A_{\overline{a_{3}}}\cdots A_{\overline{a_{m}}}\begin{pmatrix}
	1 \\
	0
\end{pmatrix}
\end{align*}
from where we derive, since the matrices $A_{u}$ are invertibles,
\begin{center}
	$\begin{pmatrix}
	1 \\
	0
\end{pmatrix}= A_{\overline{a_{n+1}}}A_{\overline{a_{n+2}}}A_{\overline{a_{n+3}}}\cdots A_{\overline{a_{m}}}\begin{pmatrix}
	1 \\
	0
\end{pmatrix}$.
\end{center}
This shows the existence of a rank $n_0\geq 1$ such that for all $n\geq n_0$, there exists an integer $0\leq k \leq n-1$ such that
	\begin{center}
	$A_{\overline{a_{n-k-1}}}A_{\overline{a_{n-k}}}A_{\overline{a_{n-k+1}}}A_{\overline{a_{n-k+2}}}\cdots A_{\overline{a_{n}}}\begin{pmatrix}
	1 \\
	0
\end{pmatrix}=\begin{pmatrix}
	1 \\
	0
\end{pmatrix}$
\end{center} 
and we have seen that this implies $\begin{pmatrix}	r_{k+1} \\ r_k \end{pmatrix} = \begin{pmatrix}	0 \\ 1 \end{pmatrix}$.

In this case, we have the relation
\begin{center}
	$\displaystyle q_n= N \left\lfloor \frac{a_n}{N} \right\rfloor q_{n-1}+ \sum_{i=1}^{k}N \left\lfloor \frac{a_{n-i}r_i+r_{i-1}}{N}\right\rfloor q_{n-i-1} + q_{n-k-2}$
\end{center}
and since the integers $(r_i)_{i=0}^{k}$ belong to the set $[0,N[$, we can bound from above each coefficients attached to the continuants by
\begin{align*}
	\displaystyle \left\lfloor \frac{a_{n-i}r_i+r_{i-1}}{N}\right\rfloor &\leq \frac{a_{n-i}r_i+r_{i-1}}{N} \\&< a_{n-i}+1.
\end{align*}
This inequality being true among integers, we get
\begin{center}
	$\displaystyle \left\lfloor \frac{a_{n-i}r_i+r_{i-1}}{N}\right\rfloor \leq a_{n-i}$
\end{center}
and we are in the proper conditions to apply lemma \ref{controle}. We deduce that the integer
\begin{center}
	$\displaystyle \frac{q_n- q_{n-k-2}}{N}= \left\lfloor \frac{a_n}{N} \right\rfloor q_{n-1}+ \sum_{i=1}^{k} \left\lfloor \frac{a_{n-i}r_i+r_{i-1}}{N}\right\rfloor q_{n-i-1}$
\end{center}
has a support in the Ostrowski numeration system contained in the integer interval $[n-k-1,n]$. In fact, given the trivial upper bound
\begin{center}
	$\displaystyle \frac{q_n- q_{n-k-2}}{N}< q_n$
\end{center}
we can conclude that this support belongs to the set $]n-k-2,n[$.

Our arguments apply to two integers $n\leq m$ such that
\begin{center}
	$\displaystyle A_{\mathbb{P}_n(y)}\begin{pmatrix}
	1 \\
	0
\end{pmatrix}= A_{\mathbb{P}_{m}(y)}\begin{pmatrix}
	1 \\
	0
\end{pmatrix}$,
\end{center}
and we have seen that there exists $n_0\geq 1$ such that for all $n\geq n_0$ there is an integer $m\geq n$ such that the equality above is satisfied, and this allows us to conclude.
\end{proof}

In the case of the Fibonacci sequence, this result is a generalisation of the following formulas :
\begin{center}
	$F_{n+3}-F_n=2F_{n+1}$, \quad $F_{n+8}-F_n=3(F_{n+5}+F_{n+3})$ \quad and \quad $F_{n+6}-F_n=4F_{n+3}$
\end{center}
or
\begin{center}
$F_{n+20}-F_n=5(F_{n+16}+F_{n+13}+F_{n+11}+F_{n+9}+F_{n+6}+F_{n+3})$,
\end{center}
where the numbers appearing on the right-hand side are written in the Ostrowski numeration system.

This results points towards the existence of a summation process on $\alpha$-numbers. Indeed, this result should aim to considerations of sums of the form
\begin{center}
	$\displaystyle \rho =\sum_{k=1}^{+\infty}\frac{q_{c_{k+1}}-q_{c_k}}{N}$
\end{center}
where the integers within the sum have disjoint supporting intervals in the Ostrowski numeration system. A summation process on $\alpha$-number should then provide $N\rho=\sum_{k=1}(q_{c_{k+1}}-q_{c_k})$ which should define a formal intercept of zero equivalence class, hence the interpretation of these relations as torsion relations.

\bigskip

\textit{These results are part of the Ph.D results of the author. Thanks : University Lyon 1, University Lille 1, Harbin Institute of Technology (Shenzhen).}

\end{document}